\documentclass[12pt,reqno]{amsart}

\usepackage[letterpaper,margin=1in]{geometry}
\usepackage{amsmath,amssymb,mathtools,mathrsfs,esint}
\usepackage{enumitem}
\usepackage{microtype}
\usepackage[colorlinks=true,linkcolor=blue,citecolor=blue,urlcolor=blue]{hyperref}
\usepackage[nameinlink,capitalise,noabbrev]{cleveref}

\allowdisplaybreaks
\numberwithin{equation}{section}

\newtheorem{theorem}{Theorem}[section]
\newtheorem{proposition}[theorem]{Proposition}
\newtheorem{lemma}[theorem]{Lemma}
\newtheorem{corollary}[theorem]{Corollary}
\newtheorem*{unnumberedtheorem}{Theorem}
\newtheorem*{unnumberedlemma}{Lemma}
\newtheorem*{unnumberedcorollary}{Corollary}
\theoremstyle{definition}

\newtheorem*{unnumberedexample}{Example}
\theoremstyle{remark}

\newtheorem*{unnumberedremark}{Remark}
\theoremstyle{plain}

\newcommand{\C}{\mathbb C}

\newcommand{\dd}{\,d}
\newcommand{\Hol}{\operatorname{Hol}}
\newcommand{\dist}{\operatorname{dist}}
\newcommand{\Tr}{\operatorname{Tr}}
\newcommand{\rank}{\operatorname{rank}}
\newcommand{\avg}{\mathop{\fint}}
\newcommand{\Q}{\mathcal Q}
\newcommand{\V}{\mathcal V}
\newcommand{\Schatten}{\mathcal S}
\newcommand{\Acal}{\mathcal A}
\newcommand{\Dcal}{\mathcal D}

\title[Generalized Volterra companion operators]
{Generalized Volterra Companion Operators on Bergman Spaces over
Convex Domains of Finite Type}

\author{Jianxiang Dong}

\author{Chunxu Xu}

\subjclass[2020]{Primary 47B38; Secondary 32A36, 32T25, 47B10, 47B32}
\keywords{Generalized Volterra companion operator, Bergman space, convex finite-type domain, derivative Carleson measure, Schatten class}

\hypersetup{
  pdftitle={Generalized Volterra Companion Operators on Bergman Spaces over Convex Domains of Finite Type},
  pdfauthor={Jianxiang Dong and Chunxu Xu},
  pdfsubject={Generalized Volterra companion operators on finite-type Bergman spaces},
  pdfkeywords={Generalized Volterra companion operator, Bergman space, convex finite-type domain, derivative Carleson measure, Schatten class}
}

\begin{document}

\begin{abstract}
We study generalized Volterra companion operators on Bergman spaces
over smoothly bounded convex domains of finite type.  A derivative
Carleson embedding gives boundedness and compactness criteria between
reflexive Bergman spaces.  When the target exponent is smaller than the
source exponent, boundedness already implies compactness.  In the
other range, we also obtain an essential-norm formula.  Local masses
on McNeal polydiscs describe these criteria.  Their norm estimates
require an extra weight because radial derivatives vanish at the
origin.  On the Hilbert Bergman space, we characterize Schatten-class
membership at and above the Hilbert--Schmidt threshold and prove a
sufficient condition below it.  We also give a Hilbert--Schmidt kernel
test.  For bounded symbols and self-maps with relatively compact image,
singular values decay exponentially in a power of their index.  An
ellipsoid example gives a sharp power law governed by boundary type
and dimension.  The results extend to positive radial shifts.
\end{abstract}

\maketitle

\section{Introduction}

For holomorphic $f$ and $g$ on the unit disc, the product formula
splits into two integral operators:
\[
 J_gf(z)=\int_0^z f(\zeta)g'(\zeta)\,\dd\zeta,
 \qquad
 I_gf(z)=\int_0^z f'(\zeta)g(\zeta)\,\dd\zeta.
\]
Thus $fg-f(0)g(0)=J_gf+I_gf$.  The two operators place the
derivative on different factors.  We consider a radial version of
the companion $I_g$ on higher-dimensional domains.

We work on a bounded convex domain $\Omega\subset\C^n$ with smooth
boundary of finite type.  Translate it so that $0\in\Omega$.  Write
$dv$ for Lebesgue volume and $\delta(z)=\dist(z,\partial\Omega)$.
For $1<p<\infty$, let
$A^p(\Omega)=\Hol(\Omega)\cap L^p(\Omega,dv)$ with its usual norm.
Convexity ensures that every radial segment from $0$ lies in $\Omega$.

Put $Rf(z)=\sum_{k=1}^n z_k\partial_k f(z)$ and let
$\mathcal H(\Omega)=\Hol(\Omega,\Omega)$.  If
$\varphi\in\mathcal H(\Omega)$ fixes $0$, define
\[
 C_{g,\varphi}f(z)
 =\int_0^1 Rf(\varphi(tz))g(tz)\,\frac{\dd t}{t}.
\]
The integrand is locally bounded near $t=0$.  Indeed,
$\varphi(tz)=O(t)$ and $Rf(0)=0$.  Differentiation along the
radial segment gives
\[
 C_{g,\varphi}f(0)=0,
 \qquad R(C_{g,\varphi}f)(z)=g(z)Rf(\varphi(z)).
\]
For $\varphi=\operatorname{id}$, write $C_g=C_{g,\operatorname{id}}$.
On the disc, $C_g=I_g$.  The self-map in $C_{g,\varphi}$ therefore
acts on the differentiated function.

Aleman and Siskakis studied Volterra integrals on Hardy and Bergman
spaces \cite{AlemanSiskakis1995,AlemanSiskakis1997}.  Further
Hardy-space results appear in
\cite{SiskakisZhao1999,AlemanCima2001,Rattya2007}; see also the
survey \cite{Siskakis2006}.  Hu studied extended Ces\`aro operators
on the ball \cite{Hu2004}.  Pau and Pel\'aez considered rapidly
decreasing Bergman weights \cite{PauPelaez2010,PauPelaez2012}.

Composition has also been studied in other analytic spaces.
Mengestie treated Volterra--composition products and generalized
companions on Fock spaces
\cite{Mengestie2014JGA,Mengestie2016PA,Mengestie2016BJMA}.
Arroussi, Gissy and Virtanen obtained boundedness and compactness
criteria for generalized Volterra operators on large Bergman spaces
of the disc \cite{ArroussiGissyVirtanen2023}.  Almoka, Arroussi and
Virtanen later treated Schatten classes in that setting
\cite{AlmokaArroussiVirtanen2025}.  Abkar studied related radial
composition--differentiation operators on the ball and polydisc
\cite{Abkar2024}.

The geometry of a finite-type convex domain is different.  Near a
weakly pseudoconvex boundary point, tangential and normal scales
can decay at different rates.  McNeal's polydiscs capture these
scales \cite{McNeal1992,McNeal1994}.  The Bergman projection
estimates of McNeal and Stein provide an analytic tool
\cite{McNealStein1994}.  Jasiczak \cite{Jasiczak2010} and Li, Liu
and Wang \cite{LiLiuWang2024} developed Carleson measure theory
using this local geometry.

For Schatten questions, Luecking's disc theorem provides an earlier
model \cite{Luecking1987}.  Xiao, Yang and Yuan studied ordinary
Carleson embeddings and positive Toeplitz operators on convex
finite-type domains \cite{XiaoYangYuan2026}.  Their Toeplitz
criterion covers Schatten exponents at least one.  Li, Long and
Wang addressed smaller exponents for ordinary Toeplitz operators
and weighted composition operators \cite{LiLongWang2026}.
Both papers concern embeddings that evaluate $f$, rather than $Rf$.
Their Toeplitz forms therefore have a different reproducing vector.
In particular, their small-exponent results do not supply a
small-exponent characterization for the companion studied here.

Our recent preprint treats absolute summability for the usual
Volterra--composition operator on convex finite-type domains
\cite{DongXu2026Abs}.  That operator satisfies
$R(V_{h,\varphi}f)=(f\circ\varphi)Rh$.  Here we instead study
$R(C_{g,\varphi}f)=g(Rf\circ\varphi)$.  The derivative is on $f$.
The preprint supplies the radial Littlewood--Paley formula used
below.  It does not give the derivative embedding required here.

The ordinary embeddings above measure $f$.  The companion requires
an embedding of $\delta Rf$.  On the Hilbert space, the associated
positive form is
\[
 \mathfrak s_\mu(f,f)
 =\int_\Omega |Rf(w)|^2\delta(w)^2\,\dd\mu(w).
\]
Its reproducing vectors involve derivatives of the Bergman kernel.
Ordinary Toeplitz criteria therefore do not directly give its
Schatten criterion.  The pull-back measure can also be singular.
In the ellipsoid example below, it is carried by a complex
hypersurface.
The derivative embedding theorem is the main new step.  It covers
both $p\le q$ and $q<p$ and accepts locally finite singular measures.
The latter range requires derivative atoms on separated boundary
cells.  For $s\ge2$, its Hilbert-space form also leads to a trace
criterion for the positive derivative form.  We make no claim of a
general necessary criterion for $0<s<2$.

There is a second issue for norm estimates.  The identity $Rf(0)=0$
makes mass at the origin invisible to the companion.  For
$\Omega=\mathbb D$, $g=1$, and $\varphi_\varepsilon(z)=\varepsilon z$,
\[
 C_{1,\varphi_\varepsilon}f(z)
 =f(\varepsilon z)-f(0),\qquad
 \|C_{1,\varphi_\varepsilon}\|_{A^2\to A^2}=\varepsilon.
\]
Yet the unweighted local mass near $0$ does not tend to zero as
$\varepsilon\to0$.  Weighting that mass by $|w|^q$ gives the
correct norm scale.  The weight has no effect on the boundary
criteria.  See \cref{subsec:examples-sharpness} for the calculation.

We first prove a derivative Carleson embedding theorem for
locally finite measures.  A radial Littlewood--Paley estimate then
reduces the companion to this embedding.  When $p\le q$,
derivative peak functions give the necessary condition and the
essential-norm lower bound.  When $q<p$, projection atoms and
Khinchine's inequality yield an integrability criterion.  In this
range, boundedness implies compactness.  The atom construction
uses convexity of $\log K(z,z)$ on convex domains
\cite{Xiong2026}.

For Schatten classes, we compare $C_{g,\varphi}^*C_{g,\varphi}$
with the positive form $\mathfrak s_\mu$.  A derivative trace
criterion gives an equivalence when $s\ge2$.  For $0<s<2$, we
prove a sufficient condition by local polynomial approximation.
On a convex ellipsoid, the singular values can be computed directly.
Their decay depends on both boundary type and dimension.

We use McNeal polydiscs.  For small $\rho>0$ let $\Q_\rho(z)$ be the
McNeal polydisc centered at $z$ and put
\[
 \V_\rho(z)=v(\Q_\rho(z)),
 \qquad
 \dd\lambda_\Omega(z)=\frac{\dd v(z)}{\V_\rho(z)}.
\]
Different small values of $\rho$ give equivalent conditions.  Define
\[
\int_\Omega h(w)\delta(w)^q\,\dd\mu_{g,\varphi,q}(w)
=
\int_{\{\varphi(z)\ne0\}}
h(\varphi(z))|g(z)|^q\delta(z)^q\,\dd v(z),
\qquad h\ge0 .
\]
The set $\varphi^{-1}(\{0\})$ is omitted.  It gives no term, since
$Rf(0)=0$.
Equivalently,
\[
 \mu_{g,\varphi,q}(E)
 =
 \int_{\varphi^{-1}(E\setminus\{0\})}
 |g(z)|^q
 \left(\frac{\delta(z)}{\delta(\varphi(z))}\right)^q
 \dd v(z).
\]
Set
\[
 \Acal_{g,\varphi,q,\rho}(z)
 =
 \left(
 \frac{\mu_{g,\varphi,q}(\Q_\rho(z))}{\V_\rho(z)}
 \right)^{1/q}
\]
and
\[
 \Dcal_{g,\varphi,p,q,\rho}(z)
 =
 \V_\rho(z)^{1/q-1/p}\Acal_{g,\varphi,q,\rho}(z).
\]
For the norm formulas, replace $\mu_{g,\varphi,q}$ by
$|w|^q\mu_{g,\varphi,q}$.  Denote the resulting functions by
$\Acal^{\circ}_{g,\varphi,q,\rho}$ and
$\Dcal^{\circ}_{g,\varphi,p,q,\rho}$.  These agree with the original
averages up to constants near the boundary.  They differ on compact
sets containing the origin.
Then
\[
\begin{aligned}
 \|C_{g,\varphi}f\|_{A^q}^q
 &\asymp
 \int_\Omega |Rf(\varphi(z))|^q|g(z)|^q\delta(z)^q\,\dd v(z)\\
 &=
 \int_\Omega |Rf(w)|^q\delta(w)^q\,\dd\mu_{g,\varphi,q}(w).
\end{aligned}
\]
The main theorems assume that this measure is locally finite.
The local averages are then well defined on compact subsets.

The same reduction holds for every nonsingular radial shift
\[
 C^{(\alpha)}_{g,\varphi}f(z)
 =
 \int_0^1 Rf(\varphi(tz))g(tz)t^{\alpha-1}\,\dd t,
 \qquad \alpha>0,
\]
because
\[
 (R+\alpha)C^{(\alpha)}_{g,\varphi}f=g\,Rf\circ\varphi .
\]
The case $\alpha=0$ is the operator $C_{g,\varphi}$ introduced above.
Thus we state the main conclusions for $\alpha\ge0$, with the convention
$C^{(0)}_{g,\varphi}=C_{g,\varphi}$.  We write
$\Schatten_s$ for the Schatten class of order $s$.
All operators are first defined on $\Hol(\overline\Omega)$.  When
boundedness holds, the same notation denotes the unique bounded
extension to the indicated Bergman space.

\begin{unnumberedtheorem}
Let $\alpha\ge0$, $g\in\Hol(\Omega)$, and
$\varphi\in\mathcal H(\Omega)$ with $\varphi(0)=0$.
\begin{enumerate}[label=\textup{(\roman*)}]
\item Let $1<p\le q<\infty$ and assume that
$\mu_{g,\varphi,q}$ is locally finite.  Then
$C^{(\alpha)}_{g,\varphi}:A^p(\Omega)\to A^q(\Omega)$ is bounded if and
only if
\[
 \Dcal_{g,\varphi,p,q,\rho}\in L^\infty(\Omega).
\]
Moreover,
\[
 \|C^{(\alpha)}_{g,\varphi}\|_{A^p\to A^q}
 \asymp_\alpha
 \|\Dcal^{\circ}_{g,\varphi,p,q,\rho}\|_{L^\infty}.
\]
The same operator is compact if and only if
\[
 \Dcal_{g,\varphi,p,q,\rho}(z)\to0,
 \qquad z\to\partial\Omega .
\]
If it is bounded, then
\[
 \|C^{(\alpha)}_{g,\varphi}\|_{e,A^p\to A^q}
 \asymp_\alpha
 \limsup_{z\to\partial\Omega}\Dcal_{g,\varphi,p,q,\rho}(z).
\]
Here $\|\cdot\|_e$ denotes the essential norm.

\item Let $1<q<p<\infty$, assume that $\mu_{g,\varphi,q}$ is locally
finite, and put
\[
 r=\frac{pq}{p-q}.
\]
Then the following are equivalent:
\[
\begin{aligned}
&C^{(\alpha)}_{g,\varphi}:A^p(\Omega)\to A^q(\Omega)
  \text{ is bounded},\\
&C^{(\alpha)}_{g,\varphi}:A^p(\Omega)\to A^q(\Omega)
  \text{ is compact},\\
&\Dcal_{g,\varphi,p,q,\rho}\in L^r(\Omega,d\lambda_\Omega).
\end{aligned}
\]
Moreover,
\[
 \|C^{(\alpha)}_{g,\varphi}\|_{A^p\to A^q}
 \asymp_\alpha
 \|\Dcal^{\circ}_{g,\varphi,p,q,\rho}\|_{L^r(d\lambda_\Omega)} .
\]

\item Let $2\le s<\infty$ and assume that $\mu_{g,\varphi,2}$ is locally
finite.  Then
\[
 C^{(\alpha)}_{g,\varphi}\in \Schatten_s(A^2(\Omega))
 \Longleftrightarrow
 \Acal_{g,\varphi,2,\rho}\in L^s(\Omega,d\lambda_\Omega).
\]
Moreover,
\[
 \|C^{(\alpha)}_{g,\varphi}\|_{\Schatten_s}^s
 \asymp_\alpha
 \int_\Omega \Acal^{\circ}_{g,\varphi,2,\rho}(z)^s\,\dd\lambda_\Omega(z).
\]
\end{enumerate}
\end{unnumberedtheorem}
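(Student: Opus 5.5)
The plan is to reduce all three parts to a single derivative Carleson embedding for the measure $\mu=\mu_{g,\varphi,q}$, i.e.\ to estimates for
\[
 f\mapsto \Bigl(\int_\Omega |Rf|^q\delta^q\,\dd\mu\Bigr)^{1/q}
\]
on $A^p(\Omega)$, and then transfer back to $C^{(\alpha)}_{g,\varphi}$.

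\textbf{Step 1: radial Littlewood--Paley reduction.} From the radial Littlewood--Paley formula in \cite{DongXu2026Abs} we take
\[
 \|F\|_{A^q}^q\asymp |F(0)|^q+\int_\Omega|RF|^q\delta^q\,\dd v,
\]
and its shifted version with $R+\alpha$ in place of $R$, which holds for $\alpha>0$ with no point term. We apply it to $F=C^{(\alpha)}_{g,\varphi}f$. Here $F(0)=0$ when $\alpha=0$, and $(R+\alpha)F=g\,Rf\circ\varphi$. This gives the displayed identity
\[
 \|C^{(\alpha)}_{g,\varphi}f\|_{A^q}^q\asymp_\alpha\int|Rf|^q\delta^q\,\dd\mu_{g,\varphi,q}.
\]
Boundedness, compactness and the Schatten norms of the companion are therefore those of the embedding $\delta R:A^p\to L^q(\mu)$. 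For the Schatten part, $C^*C$ is comparable, as a quadratic form, to the positive form $\mathfrak s_\mu$.

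\textbf{Step 2: the derivative embedding when $p\le q$.} For sufficiency, I cover $\Omega$ by a separated family of McNeal polydiscs $\Q_\rho(z_j)$ with bounded overlap. The Cauchy estimates on $\Q_{2\rho}$ give $\delta(w)|Rf(w)|^q\lesssim \V^{-1}\int_{\Q_{2\rho}}|f|^q$ near the boundary. Near the origin we use $|Rf(w)|\lesssim|w|\sup|\nabla f|$, and this is exactly why the weight $|w|^q$ enters the norm formula. Summing over the cover and using $\ell^{p}\subset\ell^{q}$ gives
\[
 \int|Rf|^q\delta^q\,\dd\mu\lesssim\sup_j\Dcal^{\circ}(z_j)^q\,\|f\|_{A^p}^q.
\]
For necessity, I test on normalized derivative peak functions $f_z=K(\cdot,z)^{m}/\|K(\cdot,z)^m\|_{A^p}$, using McNeal's kernel estimates. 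The needed facts are that $|Rf_z|\gtrsim \V(z)^{-1/p}\delta(z)^{-1}$ on $\Q_\rho(z)$ and that $f_z\to0$ weakly as $z\to\partial\Omega$. This step yields the lower bound, the compactness criterion, and the essential-norm lower bound via $\|C f_z\|$ for compact perturbations. The essential-norm upper bound comes from splitting $\mu=\mu|_{\Omega_\varepsilon}+\mu|_{K}$ with $K$ compact; the compact piece gives a compact operator by Montel's theorem.

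\textbf{Step 3: the case $q<p$ and Schatten classes.} For sufficiency when $q<p$, the same local estimate applied with the sequence $\{\Dcal(z_j)\}$ and Hölder's inequality with exponents $p/q$ and $r/q$ bounds the embedding by $\|\Dcal\|_{L^r(d\lambda)}$. Tail control in this estimate also gives compactness. For necessity, I form the sum of atoms $\sum_j a_j\varepsilon_j k_{z_j}$ with Rademacher signs, where the $k_{z_j}$ are derivative atoms. Khinchine's inequality and the atomic decomposition bound (here convexity of $\log K$, from \cite{Xiong2026}, is needed to construct atoms on separated cells) give
\[
 \sum_j |a_j|^q\Dcal(z_j)^q\lesssim\|C\|^q\|a\|_{\ell^p}^q.
\]
Duality between $\ell^{p/q}$ and $\ell^{r/q}$ then yields $\Dcal\in L^r(d\lambda)$. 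For $s\ge2$, I write $T=C^*C$ and compare it with the Toeplitz-type operator of $\mathfrak s_\mu$. The upper trace bound $\Tr(T^{s/2})\lesssim\sum_j\Acal(z_j)^s$ follows from $s/2\ge1$ convexity applied to a decomposition $T=\sum_j T_j$ with $T_j$ localized to a cell, together with $\|T_j\|_{\Schatten_{s/2}}\lesssim\Acal(z_j)^2$. The lower bound comes from $\Tr(T^{s/2})\ge\sum_j\langle Te_j,e_j\rangle^{s/2}$ for the almost-orthonormal normalized derivative kernels $e_j$, which is valid for $s/2\ge1$.

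\textbf{Main obstacle.} The hardest part will be the lower estimates that rely on derivative kernels. We need to show that $R$ applied to the normalized kernel at $z$ is bounded below on the whole polydisc $\Q_\rho(z)$, uniformly in the anisotropic geometry, and that the derivative atoms are nearly orthogonal and satisfy the atomic bound. My first attempt would use McNeal--Stein estimates for the derivatives of $K$ in the normal direction. The point is that $R$ has a nontrivial normal component near the boundary, by convexity and the fact that $0\in\Omega$.
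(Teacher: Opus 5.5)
Your overall architecture matches the paper's: the radial Littlewood--Paley reduction, a McNeal-lattice Carleson estimate, peak functions for necessity, Rademacher atoms built with Xiong's convexity for $q<p$, and $C^*C\asymp S_\mu$ for Schatten classes. However, three steps fail as written.

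\textbf{The Schatten upper bound in (iii) is wrong as stated.} For positive operators and $t=s/2\ge1$, convexity goes the wrong way: McCarthy's inequality gives $\Tr(\sum_jT_j)^t\ge\sum_j\Tr(T_j^t)$. The triangle inequality with $\|T_j\|_{\Schatten_t}\lesssim\Acal(z_j)^2$ only yields the $\ell^1$ bound $\sum_j\Acal(z_j)^2$. Taking $N$ copies of one rank-one projection shows that no argument without almost orthogonality can give the $\ell^{t}$ bound. The missing input is almost orthogonality of the cell pieces, and the paper supplies it by interpolation. The map $c\mapsto\sum_jc_jS_j/d_j$ is bounded $\ell^\infty\to\mathcal B(A^2)$ by the $p=q=2$ derivative Carleson bound and bounded overlap. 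It is bounded $\ell^1\to\Schatten_1$ by $\Tr S_j=\int_{E_j}h\,\dd\mu\lesssim d_j$. Interpolating to $\ell^t\to\Schatten_t$ and taking $c_j=d_j$ gives $\|S_{\mu_B}\|_{\Schatten_t}^t\lesssim\sum_jd_j^t$.

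\textbf{The peak-function lower bound is not established.} You need $|Rf_z|\gtrsim\delta(z)^{-1}\V(z)^{-1/p}$ for $f_z=K(\cdot,z)^m/\|K(\cdot,z)^m\|_{A^p}$, and you defer it to McNeal--Stein. Those estimates bound derivatives of $K$ only from above. Transversality of $R$ only says that $R$ sees the normal derivative; it says nothing about its size. What is needed is $\delta(a)|R_zK(a,a)|\gtrsim K(a,a)$. The paper gets this from Xiong's convexity of $\log K(z,z)$: compare with the secant slope between $a$ and $b=(1-Ld)\xi$, where $K(b,b)\lesssim L^{-2}\V_\rho(a)^{-1}$. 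You could import this for your $K^m$ peaks. For $p\le q$ the paper avoids it entirely with $F_a=c_1L_aP_a$, where $L_a(z)=\langle z-a,\nu(a)\rangle/\delta(a)$. Then $RF_a=(RL_a)P_a+L_aRP_a$, with $|RL_a|\gtrsim\delta(a)^{-1}$ explicit and $L_a$ small on $\Q_{\epsilon\rho}(a)$, so only an upper Cauchy bound on $RP_a$ is used.

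Also, no peak function gives a lower bound near the origin, since $Rf(0)=0$. The corrected-norm lower bounds on the compact core come from testing $f=z_l$. In the Schatten case they come from $\Tr S_{\mu_0}\asymp\int|z|^2\,\dd\mu_0$ together with the coordinate tests. Your sketch omits this, so the $\asymp$ with $\Dcal^{\circ}$ and $\Acal^{\circ}$ is not justified.

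\textbf{The essential-norm upper bound needs a genuine compact operator.} Splitting $\mu=\mu|_{\Omega_\varepsilon}+\mu|_K$ splits the seminorm $\int|Rf|^q\delta^q\,\dd\mu$, not the operator. It therefore produces no compact $A^p\to A^q$ operator to subtract from $C^{(\alpha)}_{g,\varphi}$. The paper uses the compact operators $C_{g,\varphi}S_r$, where $S_rf(z)=f(rz)$. It then estimates $\int|R(f-S_rf)|^q\delta^q\,\dd\mu$ separately on $\{\delta<\tau\}$ and $\{\delta\ge\tau\}$.
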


The proof of equivalence uses the positive operator
$C_{g,\varphi}^*C_{g,\varphi}$.  Its Schatten exponent is
$s/2\ge1$ when $s\ge2$.  For $0<s<2$, the condition
$\Acal^{\circ}_{g,\varphi,2,\rho}\in L^s(d\lambda_\Omega)$ is
sufficient by \cref{cor:small-schatten-sufficient}.  We do not prove
necessity in this range.  The ellipsoid example gives an exact
criterion for every $s>0$.

If $g\in H^\infty(\Omega)$ and $\varphi(\Omega)$ lies in a compact
subset of $\Omega$, then $C^{(\alpha)}_{g,\varphi}:A^p\to A^q$
is compact for all $1<p,q<\infty$.  On $A^2$, its singular values
satisfy $s_j(C^{(\alpha)}_{g,\varphi})\le C\exp(-c j^{1/n})$.
Thus $C^{(\alpha)}_{g,\varphi}\in\Schatten_s$ for every $s>0$;
see \cref{cor:strict-self-map}.

For $\varphi=\operatorname{id}$ the measure becomes
\[
 \dd\mu_{g,\operatorname{id},q}=|g|^q\,\dd v .
\]
Hence the preceding results give sharp tests for the companion
operator $C_g$.  In particular, if $p<q$, boundedness of
$C_g:A^p\to A^q$ forces $g=0$.  If $p=q$, boundedness is equivalent to
$g\in H^\infty(\Omega)$, and compactness forces $g=0$.  Here
$H^\infty(\Omega)$ is the space of bounded holomorphic functions.
On $A^2(\Omega)$, $C_g$ belongs to a Schatten class only if $g=0$.
A composition map with relatively compact image behaves differently:
its pull-back measure is supported away from the boundary.

Section~2 develops the geometric and radial estimates.  Section~3
proves the derivative embedding theorem and applies it to boundedness,
compactness, and essential norms.  Section~4 studies Schatten classes
and compact-range singular values.

Constants denoted by $C$ may change from line to line.  They depend
only on the domain, the exponents, and fixed geometric parameters,
unless stated otherwise.
We write $A\asymp B$ if both $A\le CB$ and $B\le CA$ hold with such
constants.  We write $\avg_E$ for normalized integration over a set
$E$ of positive volume.

\section{Preparation}

We fix the geometric notation and the radial estimates used below.
The final subsection constructs tests for derivative evaluations.

\subsection{McNeal polydiscs}

We use the geometry of McNeal for bounded convex finite-type domains
\cite{McNeal1992,McNeal1994}.  If $\rho>0$ is small and $z\in\Omega$,
then $\Q_\rho(z)$ denotes a McNeal polydisc.  Its volume is
\[
 \V_\rho(z)=v(\Q_\rho(z)).
\]
If $w\in\Q_\rho(z)$, then
\[
 \V_\rho(w)\asymp \V_\rho(z),
 \qquad
 \delta(w)\asymp\delta(z),
\]
where
\[
 \delta(z)=\dist(z,\partial\Omega).
\]
For each small $\rho$ there is a McNeal lattice
$\{a_j\}_{j\ge1}\subset\Omega$ such that
\[
 \Omega=\bigcup_j \Q_\rho(a_j).
\]
We write
\[
 Q_j=\Q_\rho(a_j),\qquad V_j=\V_\rho(a_j).
\]
The smaller polydiscs are separated and fixed enlargements have bounded
overlap.

For non-negative functions which are essentially constant on small
polydiscs,
\[
 \int_\Omega F(z)\,\dd\lambda_\Omega(z)
 \asymp
 \sum_j F(a_j),
 \qquad
 \dd\lambda_\Omega(z)=\frac{\dd v(z)}{\V_\rho(z)}.
\]
In particular, for a positive measure $\mu$ and $0<t<\infty$,
\[
 \int_\Omega
 \left(\frac{\mu(\Q_\rho(z))}{\V_\rho(z)}\right)^t
 \dd\lambda_\Omega(z)
 \asymp
 \sum_j
 \left(\frac{\mu(Q_j)}{V_j}\right)^t .
\]

Let $K$ be the Bergman kernel and set
\[
 k_a(z)=\frac{K(z,a)}{K(a,a)^{1/2}} .
\]
McNeal's estimates give
\[
 K(a,a)\asymp \V_\rho(a)^{-1}.
\]
For a smaller polydisc,
\[
 |k_a(z)|^2\ge \frac{c}{\V_\rho(a)},
 \qquad z\in\Q_{\epsilon\rho}(a).
\]
We use local kernel estimates and bounded overlap of enlarged
polydiscs $Q_j^*$.  A normalized kernel need not decay pointwise
along a radial path toward the boundary.  This occurs even in the
disc.  The tests below use integral estimates.

\begin{lemma}\label{lem:radius-independent-companion}
Let $0<\rho_1,\rho_2$ be small.  Let $\alpha>0$ and $0<t<\infty$.
For a positive Borel measure $\mu$ put
\[
 A_\rho^\alpha\mu(z)
 =
 \frac{\mu(\Q_\rho(z))}{\V_\rho(z)^\alpha}.
\]
Then the following conditions do not depend on the chosen small radius:
\[
 \sup_{z\in\Omega} A_\rho^\alpha\mu(z)<\infty,
\]
\[
 \lim_{z\to\partial\Omega}A_\rho^\alpha\mu(z)=0,
\]
and
\[
 A_\rho^\alpha\mu\in L^t(\Omega,d\lambda_{\Omega,\rho}),
 \qquad
 d\lambda_{\Omega,\rho}(z)=\frac{\dd v(z)}{\V_\rho(z)}.
\]
\end{lemma}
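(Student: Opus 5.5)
Without loss of generality I will take $\rho_1<\rho_2$, since the reverse comparison is trivial. The plan is to show that $A^\alpha_{\rho_2}\mu$ is controlled by a bounded number of copies of $A^\alpha_{\rho_1}\mu$ at nearby centers. In the other direction, monotonicity $\Q_{\rho_1}(z)\subset\Q_{\rho_2}(z)$ together with $\V_{\rho_1}(z)\asymp\V_{\rho_2}(z)$ gives $A^\alpha_{\rho_1}\mu(z)\le C A^\alpha_{\rho_2}\mu(z)$ pointwise. This volume comparability is part of McNeal's doubling geometry: the polydisc radii scale polynomially in $\rho$ with exponents bounded by the type.

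The key geometric input is a covering statement. There is $N$, depending only on $\Omega,\rho_1,\rho_2$, such that for every $z$ the polydisc $\Q_{\rho_2}(z)$ is covered by $N$ polydiscs $\Q_{\rho_1}(w_i(z))$ with $w_i(z)\in\Q_{\rho_2}(z)$. I would get this from the $\rho_1$-lattice $\{a_j\}$. Take those $a_j$ whose $\Q_{\rho_1}(a_j)$ meets $\Q_{\rho_2}(z)$. By the engulfing property, all of them lie in $\Q_{C\rho_2}(z)$. Their shrunken polydiscs are disjoint, and each has volume $\asymp\V_{\rho_1}(a_j)\asymp\V_{\rho_2}(z)$. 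A volume count in $\Q_{C\rho_2}(z)$ then bounds their number. It follows that
\[
A^\alpha_{\rho_2}\mu(z)\le \sum_{i\le N}\frac{\mu(\Q_{\rho_1}(w_i))}{\V_{\rho_2}(z)^\alpha}\le C\sum_{i\le N}A^\alpha_{\rho_1}\mu(w_i),
\]
using $\V_{\rho_1}(w_i)\asymp \V_{\rho_2}(z)$ and $\alpha>0$.

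Two of the three conditions follow directly from this bound. The supremum condition transfers immediately. For the vanishing condition, $\delta(w_i)\asymp\delta(z)$, so $w_i\to\partial\Omega$ as $z\to\partial\Omega$, and the bound passes to the limit.

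The $L^t$ condition is the step that needs care, because the $w_i$ depend on $z$. I would avoid pointwise selection and instead replace the sum by an average. Fix a small $\epsilon$. For $u\in\Q_{\epsilon\rho_1}(w_i)$, engulfing gives $\Q_{\rho_1}(w_i)\subset\Q_{C'\rho_1}(u)$. Averaging over $u$ yields
\[
A^\alpha_{\rho_2}\mu(z)\le C\sum_i\avg_{\Q_{\epsilon\rho_1}(w_i)}A^\alpha_{C'\rho_1}\mu(u)\,\dd v(u)\le C\avg_{\Q_{C''\rho_2}(z)}A^\alpha_{C'\rho_1}\mu\,\dd v.
\]
The second inequality uses that each small polydisc has volume $\asymp\V_{\rho_2}(z)$ and lies in $\Q_{C''\rho_2}(z)$. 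This only bounds the $\rho_2$ average by a $C'\rho_1$ average, with $C'\rho_1$ larger than $\rho_1$. To close the argument, I would choose the intermediate radius below $\rho_1$ from the start. Concretely, run the covering with $\rho_1$ replaced by $\rho_0=\rho_1/C'$, so the final average involves $A^\alpha_{\rho_1}\mu$ after monotonicity. The covering count $N$ then depends on $\rho_0$, which is harmless.

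It remains to show that the averaging operator $F\mapsto \avg_{\Q_{C''\rho_2}(z)}F$ is bounded on $L^t(d\lambda_{\Omega})$. For $t\ge1$, Jensen reduces this to Fubini, using $\V(z)\asymp\V(u)$ whenever $u\in\Q_{C''\rho_2}(z)$ and the bound $v\{z:u\in\Q(z)\}\lesssim\V(u)$, which again follows from engulfing. For $t<1$, Jensen is unavailable, so I would instead use the finite-sum form $\sum_i A^\alpha_{\rho_0}\mu(w_i)$. Integrate over the $\rho_2$-lattice cells, use $(\sum_i x_i)^t\le\sum_i x_i^t$, and relate each term to a sum over the $\rho_0$-lattice: $\mu(\Q_{\rho_0}(w_i))$ is bounded by a bounded number of $\mu(\Q_{\rho_1}(a_k))$ with $a_k$ from the $\rho_1$-lattice. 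Combined with the discretization $\int(\mu(\Q)/\V^\alpha)^t\dd\lambda\asymp\sum_j(\mu(Q_j)/V_j^\alpha)^t$, whose proof uses the same covering argument, this settles every $t>0$. The main obstacle is making the engulfing constants uniform up to the boundary, and I will take that uniformity as part of McNeal's geometry.
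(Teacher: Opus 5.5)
Your proposal is correct and follows essentially the paper's route. You cover a polydisc of one radius by boundedly many polydiscs of the other radius centered nearby, use $\delta(w)\asymp\delta(z)$ and $\V_{\rho_1}\asymp\V_{\rho_2}$ to transfer the supremum and vanishing conditions, and compare lattice sums via bounded overlap for the $L^t$ condition. Your averaging/Jensen argument for $t\ge1$ is an unnecessary detour: the lattice-sum comparison you use for $t<1$ already works for every $0<t<\infty$, exactly as in the paper.
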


\begin{proof}
Each $\Q_{\rho_1}(z)$ is covered by a bounded number of polydiscs
$\Q_{\rho_2}(b_m)$ with
\[
 b_m\in\Q_{C\rho_1}(z),
 \qquad
 \V_{\rho_2}(b_m)\asymp \V_{\rho_1}(z).
\]
Thus
\[
\begin{aligned}
 A_{\rho_1}^\alpha\mu(z)
 &=
 \frac{\mu(\Q_{\rho_1}(z))}{\V_{\rho_1}(z)^\alpha}       \\
 &\le
 C\sum_m
 \frac{\mu(\Q_{\rho_2}(b_m))}{\V_{\rho_2}(b_m)^\alpha}
 \le
 C\sup_{w\in\Q_{C\rho_1}(z)}A_{\rho_2}^\alpha\mu(w).
\end{aligned}
\]
The same estimate with $\rho_1$ and $\rho_2$ interchanged gives the
supremum statement.  Since $w\in\Q_{C\rho_1}(z)$ implies
$\delta(w)\asymp\delta(z)$, it also gives the vanishing statement.

Let $\{a_j\}$ be a $\rho_1$-lattice and $\{b_k\}$ a $\rho_2$-lattice.
The same covering argument and bounded overlap give
\[
 \sum_j
 \left(
 \frac{\mu(\Q_{\rho_1}(a_j))}{\V_{\rho_1}(a_j)^\alpha}
 \right)^t
 \le
 C
 \sum_k
 \left(
 \frac{\mu(\Q_{\rho_2}(b_k))}{\V_{\rho_2}(b_k)^\alpha}
 \right)^t .
\]
The reverse inequality is the same.  This proves the $L^t$ statement.
\end{proof}

For $g\in\Hol(\Omega)$ and $1\le q<\infty$ we also use the local
average
\[
 \Acal_{g,q,\rho}(z)
 =
 \left(
 \frac1{\V_\rho(z)}
 \int_{\Q_\rho(z)} |g(w)|^q\,\dd v(w)
 \right)^{1/q}.
\]

\begin{lemma}\label{lem:local-radial-cauchy}
Let $1\le q<\infty$.  Let $Q_j=\Q_\rho(a_j)$ be a McNeal lattice and
let $Q_j^*$ be a fixed enlargement.  Then
\[
 |Rf(z)|^q\delta(z)^q
 \le
 \frac{C}{V_j}
 \int_{Q_j^*}|f(w)|^q\,\dd v(w),
 \qquad z\in Q_j ,
\]
for every $f\in\Hol(\Omega)$.
\end{lemma}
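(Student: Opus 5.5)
The plan is to use Cauchy estimates on a polydisc adapted to McNeal geometry, combined with the mean value property (subharmonicity of $|f|^q$). The key point is that $Rf(z)=\sum_k z_k\partial_k f(z)=\frac{d}{d\lambda}f(\lambda z)\big|_{\lambda=1}$. So $Rf(z)$ is a derivative of $f$ along the complex radial direction $z$, and $\delta(z)Rf(z)$ is controlled by $f$ on a disc of radius comparable to $\delta(z)$ in that direction. The radial direction need not be tangential or normal, but a scale $\delta(z)$ in any direction is always admissible: the Euclidean ball $B(z,c\delta(z))$ lies in $\Omega$. Since $0\in\Omega$ and $\Omega$ is bounded, $|z|\le C$, so $\lambda\mapsto \lambda z$ moves at speed at most $C$.

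The steps, in order, are as follows.

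\emph{Derivative bound.} For $z\in Q_j$, apply the one-variable Cauchy estimate to $h(\lambda)=f(\lambda z)$ on the disc $|\lambda-1|\le r$ with $r=c\delta(z)/C$. This gives $|Rf(z)|\le r^{-1}\sup_{|\lambda-1|=r}|f(\lambda z)|$. Better, to obtain an integral bound directly, use the gradient estimate
\[
|\nabla f(z)|\le \frac{C}{\delta(z)}\Big(\avg_{P(z)}|f|^q\,dv\Big)^{1/q}
\]
on a polydisc or ball $P(z)$ of radius $c\delta(z)$. This follows from the Cauchy integral formula together with subharmonicity of $|f|^q$ for $q\ge1$ (or for any $q>0$). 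Then $|Rf(z)|\le|z||\nabla f(z)|\le C|\nabla f(z)|$.

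\emph{Passing to the McNeal polydisc.} This is the main obstacle. The estimate above averages over a Euclidean ball $B(z,c\delta(z))$, whose volume is $\asymp\delta(z)^{2n}$. The target bound is normalized by $V_j$ instead, and $V_j$ is typically much larger, since McNeal polydiscs are long in tangential directions. The issue is resolved by containment. The normal radius of $\Q_\rho(z)$ is $\asymp\delta(z)$, and its tangential radii $\tau_k(z)\gtrsim\delta(z)$. So for small $c$, $B(z,c\delta(z))$ is not contained in $\Q_\rho(z)$ automatically at the right volume; the averaging must therefore be done over the whole McNeal polydisc. I would do this by a mixed Cauchy estimate. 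In McNeal's adapted coordinates at $z$, $\Q_\rho(z)=\{|w_k-z_k|<\rho\tau_k(z)\}$ and $\tau_k\ge c\delta(z)$. The vector $z$ (the radial direction) has coordinates bounded by $C$. Hence
\[
|Rf(z)|\le C\sum_k|\partial_{w_k}f(z)|\le C\sum_k \tau_k(z)^{-1}\Big(\avg_{\Q_{\rho}(z)}|f|^q\Big)^{1/q}.
\]
This uses Cauchy's formula on the polydisc and the sub-mean value property over the product of discs. Each $\tau_k^{-1}\le C\delta(z)^{-1}$, which gives
\[
|Rf(z)|^q\delta(z)^q\le \frac{C}{\V_\rho(z)}\int_{\Q_\rho(z)}|f|^q\,dv.
\]
The change to adapted coordinates is unitary up to translation, so volumes and $|z|$ are preserved.

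\emph{Enlargement.} For $z\in Q_j$, the engulfing property of McNeal polydiscs gives $\Q_\rho(z)\subset \Q_{C\rho}(a_j)\subset Q_j^*$ once the enlargement constant is fixed. We also have $\V_\rho(z)\asymp V_j$. Substituting these yields the stated inequality with $C$ independent of $f$ and $j$. The only delicate point is verifying that the enlargement used here matches the fixed $Q_j^*$; if it does not, I would shrink $\rho$ in the averaging polydisc, which changes constants only, as in \cref{lem:radius-independent-companion}.
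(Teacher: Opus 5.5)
Your proposal is correct and is essentially the paper's proof. Cauchy's formula on the McNeal polydisc in distinguished coordinates gives $|\partial_\ell f(z)|\lesssim \tau_\ell(z)^{-1}\bigl(\V_\rho(z)^{-1}\int_{\Q_{c\rho}(z)}|f|^q\,dv\bigr)^{1/q}$, the radial vector has bounded coordinates with $\tau_\ell(z)\ge c\delta(z)$, and engulfing together with $\V_\rho(z)\asymp V_j$ passes to $Q_j^*$; the paper averages over $\Q_{c\rho}(z)$ with small $c$, exactly your fallback.

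The Euclidean-ball detour is unnecessary. In fact $B(z,c\delta(z))$ does lie in $\Q_\rho(z)$ for small $c$; that estimate fails only because it is normalized by $\delta(z)^{2n}$ instead of $\V_\rho(z)$.
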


\begin{proof}
Fix $z\in Q_j$.  Let $\tau_1(z),\ldots,\tau_n(z)$ be the McNeal
distinguished radii at $z$.  Cauchy's formula on the distinguished
polydisc gives
\[
 \left|\frac{\partial f}{\partial \zeta_\ell}(z)\right|^q
 \le
 \frac{C}{\tau_\ell(z)^q\,\V_\rho(z)}
 \int_{\Q_{c\rho}(z)}|f(w)|^q\,\dd v(w).
\]
The radial vector decomposes in the distinguished frame as
\[
 z=\sum_{\ell=1}^n z_\ell^* e_\ell(z).
\]
Since $\Omega$ is bounded, $|z_\ell^*|\le C$.  The normal radius is
comparable to $\delta(z)$.  Each distinguished radius is at least
$c\delta(z)$, since $B(z,c\delta(z))\subset\Omega$.  Hence
\[
 \delta(z)|z_\ell^*|\le C\tau_\ell(z),
 \qquad 1\le \ell\le n .
\]
Thus
\[
 \delta(z)^q |Rf(z)|^q
 \le
 C\sum_{\ell=1}^n
 \tau_\ell(z)^q
 \left|\frac{\partial f}{\partial \zeta_\ell}(z)\right|^q .
\]
Combining the two estimates gives
\[
 |Rf(z)|^q\delta(z)^q
 \le
 \frac{C}{\V_\rho(z)}
 \int_{\Q_{c\rho}(z)}|f(w)|^q\,\dd v(w).
\]
If $z\in Q_j$, then
\[
 \Q_{c\rho}(z)\subset Q_j^*,
 \qquad
 \V_\rho(z)\asymp V_j .
\]
The result follows.
\end{proof}

\subsection{Radial formulas}

Since $0\in\Omega$ and $\Omega$ is convex, each segment
\[
 [0,z]=\{tz:0\le t\le1\}
\]
lies in $\Omega$.  If $\xi\in\partial\Omega$, then the radial direction
is uniformly transverse to the boundary.  Indeed, if $\nu(\xi)$ is the
outward unit normal, then
\[
 \operatorname{Re}\langle \xi,\nu(\xi)\rangle>0.
\]
The boundary is compact.  Hence
\[
 c_0\le \operatorname{Re}\langle \xi,\nu(\xi)\rangle\le C_0,
 \qquad \xi\in\partial\Omega .
\]
It follows that
\[
 C^{-1}(1-r)\le \delta(r\xi)\le C(1-r),
 \qquad 0\le r<1,\quad \xi\in\partial\Omega .
\]

We use radial coordinates
\[
 z=r\xi,\qquad 0<r<1,\quad \xi\in\partial\Omega .
\]
Then
\[
 \dd v(r\xi)=J(r,\xi)\,\dd r\,\dd\sigma(\xi),
\]
where $d\sigma$ is surface measure on $\partial\Omega$, and $J$ is
smooth and positive away from the origin.  Near the boundary,
\[
 J(r,\xi)\asymp J(1,\xi).
\]
Estimates on compact subsets are absorbed in the constants.

\begin{lemma}\label{lem:radial-approximation}
Let $1\le p<\infty$ and $f\in A^p(\Omega)$.  For $0<r<1$ put
\[
 f_r(z)=f(rz).
\]
Then $f_r\in\Hol(\overline\Omega)$ and
\[
 \|f_r-f\|_{A^p}\to0,
 \qquad r\to1^- .
\]
Consequently, $\Hol(\overline\Omega)$ is dense in $A^p(\Omega)$.
\end{lemma}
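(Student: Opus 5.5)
The plan is the standard dilation argument: show that $f_r$ is holomorphic on a neighbourhood of $\overline\Omega$, check convergence for functions continuous up to the boundary, and then pass to all of $A^p$ through a uniform bound $\|f_r\|_{A^p}\le C\|f\|_{A^p}$.

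\emph{Holomorphy up to the boundary.} Since $\Omega$ is convex and $0\in\Omega$, for fixed $r<1$ the dilate $r\overline\Omega$ is a compact subset of $\Omega$. Indeed, if $\xi\in\partial\Omega$, then $r\xi$ lies on the open segment from $0$ to $\xi$, hence in $\Omega$. So $r^{-1}\Omega$ is an open set containing $\overline\Omega$. On it, $f_r$ is holomorphic, which gives $f_r\in\Hol(\overline\Omega)$.

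\emph{Uniform bound.} The change of variables $w=rz$ gives
\[
\|f_r\|_{A^p}^p=r^{-2n}\int_{r\Omega}|f(w)|^p\,\dd v(w)\le r^{-2n}\|f\|_{A^p}^p.
\]
This is at most $2^{2n}\|f\|_{A^p}^p$ for $r\ge1/2$, because $r\Omega\subset\Omega$.

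\emph{Convergence.} Write the difference as
\[
f_r-f=(f_r-h_r)+(h_r-h)+(h-f).
\]
Here $h$ is a function continuous on $\overline\Omega$ with $\|h-f\|_{L^p}<\varepsilon$; holomorphy of $h$ is not needed for this estimate. For $r\ge1/2$,
\[
\|f_r-h_r\|_{L^p}\le 2^{2n/p}\|f-h\|_{L^p}.
\]
By uniform continuity on the compact set $\overline\Omega$, $h_r\to h$ uniformly, so $\|h_r-h\|_{L^p}\to0$. Putting the three pieces together gives
\[
\limsup_{r\to1^-}\|f_r-f\|_{A^p}\le (1+2^{2n/p})\,\varepsilon,
\]
and $\varepsilon$ is arbitrary. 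Density of $\Hol(\overline\Omega)$ in $A^p(\Omega)$ follows at once.

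I expect the main obstacle to be the geometric fact $r\overline\Omega\Subset\Omega$. It uses only convexity and $0\in\Omega$, which are hypotheses here, so no finite-type input is needed.
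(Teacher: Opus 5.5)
Your proposal is correct and follows the paper's proof. Convexity gives $r\overline\Omega\Subset\Omega$ in the same way. Your $\varepsilon/3$ argument, using the bound $\|f_r\|_{A^p}\le r^{-2n/p}\|f\|_{A^p}$, is simply a direct proof on $\Omega$ of the strong continuity of dilations, which the paper quotes on $L^p(\C^n)$ after extending $f$ by zero.
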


\begin{proof}
Since $0\in\Omega$ and $\Omega$ is convex, $r\overline\Omega$ is a
compact subset of $\Omega$ for every $0<r<1$.  Hence $f_r$ is
holomorphic in a neighbourhood of $\overline\Omega$.  Extend $f$ by
$0$ to an $L^p(\C^n)$ function.  The dilation operators are strongly
continuous on $L^p(\C^n)$.  Therefore
\[
 \int_\Omega |f(rz)-f(z)|^p\,\dd v(z)
 \le
 \int_{\C^n}|f(rz)-f(z)|^p\,\dd v(z)
 \to0 .
\]
This proves the assertion.
\end{proof}

The next lemma is the radial Littlewood--Paley formula on the
codimension-one subspace of functions vanishing at the origin.
The general formula for functions based at any interior point also
appears in \cite{DongXu2026Abs}.  We give a direct proof for the
range needed here.

\begin{lemma}\label{lem:radial-lp-companion}
Let $1<q<\infty$.  If $h\in\Hol(\Omega)$ and $h(0)=0$, then
\[
 \|h\|_{A^q}^q
 \asymp
 \int_\Omega |Rh(z)|^q\delta(z)^q\,\dd v(z).
\]
\end{lemma}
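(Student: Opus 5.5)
We prove the two inequalities separately, both in the radial coordinates $z=r\xi$ introduced above. Throughout we use $\delta(r\xi)\asymp 1-r$ and $J(r,\xi)\asymp J(1,\xi)$ near the boundary. By \cref{lem:radial-approximation} and Fatou's lemma it suffices to treat $h\in\Hol(\overline\Omega)$ with $h(0)=0$; the general case follows by applying the estimate to $h_r$ and letting $r\to1^-$. Both sides are insensitive, up to constants, to modifications on a fixed compact set $\{|z|\le r_0\}$, once we control such a set separately. The basic identity is
\[
 h(r\xi)=\int_0^r Rh(t\xi)\,\frac{\dd t}{t},
\]
which holds since $h(0)=0$ and $\frac{d}{dt}h(t\xi)=t^{-1}Rh(t\xi)$.

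For the upper bound $\int|Rh|^q\delta^q\,\dd v\lesssim\|h\|_{A^q}^q$ we use \cref{lem:local-radial-cauchy}. For $z\in Q_j$ it gives $|Rh(z)|^q\delta(z)^q\le CV_j^{-1}\int_{Q_j^*}|h|^q\,\dd v$. Integrating over $Q_j$, using $v(Q_j)=V_j$, and summing over $j$ with bounded overlap of the $Q_j^*$ yields the bound directly. This direction needs neither $h(0)=0$ nor $q>1$.

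For the lower bound $\|h\|_{A^q}^q\lesssim\int|Rh|^q\delta^q\,\dd v$ we argue in two steps.

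\emph{Step 1: the region near the boundary, $r\ge r_0$.} We write $h(r\xi)=h(r_0\xi)+\int_{r_0}^r Rh(t\xi)\,\frac{\dd t}{t}$. The integral term is controlled by Hardy's inequality on $(r_0,1)$ in the variable $1-t$:
\[
 \int_{r_0}^1\Bigl|\int_{r_0}^r F(t)\,\dd t\Bigr|^q\dd r
 \le C\int_{r_0}^1 |F(t)|^q(1-t)^q\,\dd t,
 \qquad q\ge1 .
\]
We apply this with $F(t)=|Rh(t\xi)|$, then integrate in $\dd\sigma(\xi)$ with the weight $J(1,\xi)$, and finally convert back to volume measure. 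This bounds the integral term by $C\int_{\{|z|\ge r_0|\xi|\}}|Rh|^q\delta^q\,\dd v$.

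\emph{Step 2: the compact piece.} It remains to bound $\int_{\Omega_0}|h|^q\,\dd v$ and $\sup_\xi|h(r_0\xi)|^q$ on a compact neighbourhood $\Omega_0$ of $r_0\overline\Omega$. This is the main obstacle, because near $0$ the quantity $Rh$ degenerates and $\delta$ gives no help. We use that $h(0)=0$ lets us invert $R$ on a fixed ball $B$ around the origin. Expand $h=\sum_{k\ge1}h_k$ in homogeneous polynomials; then $Rh_k=kh_k$. By orthogonality of homogeneous components on the ball $B_2$ of double radius,
\[
 \int_{B_2}|h|^2\,\dd v\le\int_{B_2}|Rh|^2\,\dd v .
\]
Combined with subharmonicity, this gives $\sup_{B}|h|\le C\bigl(\int_{B_2}|Rh|^q\,\dd v\bigr)^{1/q}$ for a slightly smaller ball. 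The passage from $L^2$ to $L^q$ goes through sup-norm bounds of $h$ on $B$ and local mean-value estimates for $Rh$. We then propagate from $B$ out to $\Omega_0$ along rays with the same identity, $|h(r\xi)|\le|h(\epsilon\xi)|+\int_\epsilon^{r}|Rh(t\xi)|\,\frac{\dd t}{t}$. On $\Omega_0$ we have $\delta\asymp1$, so $\sup_{\Omega_0}|Rh|$ is controlled by $\int_{\Omega_0^*}|Rh|^q\delta^q\,\dd v$ via subharmonicity. Together with Step 1, this bounds the full integral of $|h|^q$ over $\Omega$ by $C\int_\Omega|Rh|^q\delta^q\,\dd v$, which completes the lower bound.
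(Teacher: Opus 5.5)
Your proof is correct, and its skeleton is the paper's. For $\int_\Omega|Rh|^q\delta^q\,\dd v\lesssim\|h\|_{A^q}^q$ you use a local Cauchy estimate plus a covering. For the boundary collar $r\ge r_0$ you use the one-variable Hardy inequality along the rays $r\xi$.

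There are two differences.

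\emph{The easy direction.} You invoke the McNeal cells of \cref{lem:local-radial-cauchy}. The paper uses Euclidean balls $B(z,c\delta(z))$ and a Whitney cover, so that direction needs no finite-type geometry.

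\emph{The interior step.} This is the more substantive difference. The paper starts from $h(z)=\int_0^1 H(tz)\,\dd t/t$ with $H=Rh$. For $t<t_0$ it uses $H(0)=0$ and Cauchy's estimate to get $|H(tz)|\le Ct\|H\|_{L^q((3/4)\Omega)}$. For $t\ge t_0$ it uses Minkowski's inequality and the dilation $w=tz$. Together these give $\|h\|_{L^q((2/3)\Omega)}\lesssim\|H\|_{L^q((3/4)\Omega)}$ in one stroke, and the traces $h(\xi/2)$ then follow from the submean inequality. You instead invert $R$ spectrally on a centered ball: $Rh_k=kh_k$ with $k\ge1$, so $\|h\|_{L^2(B_2)}\le\|Rh\|_{L^2(B_2)}$. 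You then move between $L^2$, $L^q$ and sup norms on nested balls, and propagate along rays from a small sphere using a uniform bound on $Rh$ over the compact core.

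Your version shows clearly that $h(0)=0$ is exactly what removes the kernel of $R$. The paper's version stays in $L^q$ throughout, and the paper reuses it verbatim for $R+\alpha$ in \cref{cor:alpha-family}.

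Two simplifications are available to you.
\begin{itemize}
\item The homogeneous expansion is not actually needed. Since $Rh(0)=0$, the Schwarz-type bound $|Rh(t\xi)|\le Ct\sup_{B_2}|Rh|$ lets you start the ray integral at $t=0$. That is the paper's argument in sup-norm form.
\item Your preliminary reduction to $h\in\Hol(\overline\Omega)$ is unnecessary, because Hardy's inequality holds for arbitrary nonnegative $F$. If you keep it, note that \cref{lem:radial-approximation} does not apply, since it presupposes $h\in A^q$. You would instead need the dilation bound $\int_\Omega|Rh_r|^q\delta^q\,\dd v\le r^{-2n-q}\int_\Omega|Rh|^q\delta^q\,\dd v$, which follows from $\delta(w/r)\le\delta(w)/r$ on the convex domain, together with Fatou's lemma.
\end{itemize}
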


\begin{proof}
Put $H=Rh$ and $d=2n$.  Since $h(0)=H(0)=0$,
\[
 h(z)=\int_0^1H(tz)\,\frac{\dd t}{t}.
\]
First control the inner part of the domain.  Choose $t_0>0$ so
small that $t\bigl(\tfrac23\Omega\bigr)$ lies in a fixed ball
about $0$ for $0<t<t_0$.  Cauchy's estimate on a larger fixed ball
gives
\[
 \sup_{z\in(2/3)\Omega}|H(tz)|
 \le Ct\,\|H\|_{L^q((3/4)\Omega)},\qquad 0<t<t_0.
\]
For $t_0\le t\le1$, Minkowski's inequality and the change of
variables $w=tz$ give
\[
 \|H(t\,\cdot)\|_{L^q((2/3)\Omega)}
 \le t^{-d/q}\|H\|_{L^q((3/4)\Omega)}.
\]
Integrating with respect to $\dd t/t$ yields
\begin{equation}\label{eq:lp-interior-companion}
 \|h\|_{L^q((2/3)\Omega)}
 \le C\|H\|_{L^q((3/4)\Omega)}.
\end{equation}
The submean inequality on fixed balls around
$\partial((1/2)\Omega)$ now gives
\begin{equation}\label{eq:lp-inner-trace-companion}
 \int_{\partial\Omega}|h(\xi/2)|^q\,\dd\sigma(\xi)
 \le C\|H\|_{L^q((3/4)\Omega)}^q.
\end{equation}

Fix $\xi\in\partial\Omega$.  Since
\[
 \frac{\dd}{\dd r}h(r\xi)=\frac{H(r\xi)}r,
\]
we may write, for $1/2\le r<1$,
\[
 h(r\xi)=h(\xi/2)+\int_{1/2}^rH(t\xi)\,\frac{\dd t}{t}.
\]
Put $G_\xi(r)=\int_{1/2}^r|H(t\xi)|\,\dd t/t$.  For
$1/2<R<1$, integration by parts followed by H\"older's inequality
gives
\[
 \int_{1/2}^R G_\xi(r)^q\,\dd r
 \le q\left(\int_{1/2}^R G_\xi(r)^q\,\dd r\right)^{(q-1)/q}
 \left(\int_{1/2}^R(1-r)^qG_\xi'(r)^q\,\dd r\right)^{1/q}.
\]
Divide when the first integral is nonzero.  Since $r\ge1/2$,
letting $R\uparrow1$ gives
\[
 \int_{1/2}^1G_\xi(r)^q\,\dd r
 \le C_q\int_{1/2}^1(1-r)^q|H(r\xi)|^q\,\dd r.
\]
The radial Jacobian is
$J(r,\xi)=r^{d-1}J(1,\xi)$.  On $[1/2,1)$, it is comparable
to $J(1,\xi)$; also $\delta(r\xi)\asymp1-r$.
Integrate in $\xi$ and use \eqref{eq:lp-inner-trace-companion}.
Together with \eqref{eq:lp-interior-companion}, this yields
\[
 \|h\|_{A^q}^q
 \le
 C\int_\Omega |H(z)|^q\delta(z)^q\,\dd v(z).
\]
Here $\delta$ is bounded below on $(3/4)\Omega$.

For the reverse estimate, Cauchy's estimate in a Euclidean ball
$B(z,c\delta(z))\subset\Omega$ gives
\[
 |Rh(z)|^q\delta(z)^q
 \le
 C\avg_{B(z,c\delta(z))}|h(w)|^q\,\dd v(w).
\]
Using a Whitney covering with bounded overlap, we obtain
\[
 \int_\Omega |Rh(z)|^q\delta(z)^q\,\dd v(z)
 \le
 C\int_\Omega |h(w)|^q\,\dd v(w).
\]
\end{proof}

\begin{lemma}\label{lem:companion-identity}
Let $g\in\Hol(\Omega)$ and let
$\varphi\in\mathcal H(\Omega)$ satisfy $\varphi(0)=0$.  For
$f\in\Hol(\overline\Omega)$, that is, for $f$ holomorphic in a
neighbourhood of $\overline\Omega$, put
\[
 C_{g,\varphi}f(z)
 =
 \int_0^1 Rf(\varphi(tz))g(tz)\,\frac{\dd t}{t}.
\]
Then $C_{g,\varphi}f\in\Hol(\Omega)$,
\[
 C_{g,\varphi}f(0)=0,
 \qquad
 R(C_{g,\varphi}f)(z)=g(z)Rf(\varphi(z)).
\]
\end{lemma}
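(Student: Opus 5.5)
The plan is to show that the integrand $F(t,z)=Rf(\varphi(tz))g(tz)/t$ extends to a jointly continuous function on $[0,1]\times\Omega$ that is holomorphic in $z$ for fixed $t$. Then we differentiate under the integral sign. First fix a compact set $L\subset\Omega$. Since $\varphi$ is holomorphic with $\varphi(0)=0$, the map $\psi(t,z)=\varphi(tz)$ satisfies $\psi(t,z)=t\int_0^1 D\varphi(stz)z\,\dd s$. By convexity the segment $[0,z]$ lies in $\Omega$, so $stz\in$ the compact star hull $\{sz:0\le s\le1,\ z\in L\}\subset\Omega$, and hence $|\varphi(tz)|\le Ct$ uniformly on $L$. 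Next, $f$ is holomorphic near $\overline\Omega$ and $Rf(0)=0$. Writing $Rf(w)=\sum_k w_k\partial_kf(w)$ gives $|Rf(w)|\le C|w|$ on $\overline\Omega$, and in fact $Rf(w)=\sum_k w_k h_k(w)$ with $h_k=\partial_kf$ holomorphic. Therefore
\[
 F(t,z)=\sum_k \frac{\varphi_k(tz)}{t}\,\partial_kf(\varphi(tz))\,g(tz),
\]
and $\varphi_k(tz)/t=\int_0^1 D\varphi_k(stz)z\,\dd s$ is jointly continuous on $[0,1]\times\Omega$ and holomorphic in $z$. So $F$ extends continuously to $t=0$ and is bounded on $[0,1]\times L$.

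Holomorphy of $C_{g,\varphi}f$ then follows from a standard argument. The integral over $[0,1]$ of a jointly continuous, locally bounded family of holomorphic functions is holomorphic, for instance by Morera's theorem in each variable together with Hartogs/Osgood, or by differentiating under the integral with uniform bounds on derivatives obtained from Cauchy estimates. Evaluating at $z=0$ gives $F(t,0)=Rf(0)g(0)/t$ for $t>0$, which equals $0$. Hence $C_{g,\varphi}f(0)=0$. More cleanly, the representation above gives $\varphi_k(0)/t\cdot$, so each summand vanishes at $z=0$ for $t>0$, and the integral is $0$.

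For the radial derivative, fix $z\in\Omega$ and set $u(\lambda)=C_{g,\varphi}f(\lambda z)$ for $\lambda$ in a complex neighbourhood of $[0,1]$. The substitution $s=t\lambda$ for real $0<\lambda\le1$ gives
\[
 C_{g,\varphi}f(\lambda z)=\int_0^\lambda Rf(\varphi(sz))g(sz)\,\frac{\dd s}{s}.
\]
Since the integrand $s\mapsto F(s,z)$ is continuous on $[0,1]$, the fundamental theorem of calculus gives $\frac{\dd}{\dd\lambda}C_{g,\varphi}f(\lambda z)=Rf(\varphi(\lambda z))g(\lambda z)/\lambda$. On the other hand, for any holomorphic $h$ we have $\frac{\dd}{\dd\lambda}h(\lambda z)=\lambda^{-1}(Rh)(\lambda z)$. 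Taking $\lambda=1$ yields $R(C_{g,\varphi}f)(z)=g(z)Rf(\varphi(z))$.

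The main obstacle is the uniform control near $t=0$: we need joint continuity and local boundedness of $Rf(\varphi(tz))/t$. This is handled by the factorization through $\varphi_k(tz)/t$, which uses $\varphi(0)=0$ and convexity of $\Omega$ (star-shapedness about $0$).
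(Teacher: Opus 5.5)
Your proof is correct and follows essentially the same route as the paper. Both arguments bound the integrand near $t=0$ using $\varphi(0)=0$ and $Rf(0)=0$, deduce holomorphy, substitute $s=t\lambda$ to get $C_{g,\varphi}f(\lambda z)=\int_0^\lambda Rf(\varphi(sz))g(sz)\,\dd s/s$, and differentiate at $\lambda=1$. Your factorization through $\varphi_k(tz)/t=\int_0^1 D\varphi_k(stz)z\,\dd s$ is a more explicit form of the paper's $O(t)$ estimate and gives joint continuity at $t=0$, not just boundedness. The unfinished sentence beginning ``More cleanly'' should be deleted, but the argument before it already proves $C_{g,\varphi}f(0)=0$.
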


\begin{proof}
The integrand is holomorphic in $z$.  Since $\varphi(0)=0$,
\[
 \varphi(tz)=O(t)
\]
on compact subsets.  Also $Rf(0)=0$.  Hence, near $t=0$,
\[
 Rf(\varphi(tz))=O(t),
\]
uniformly for $z$ in compact subsets.  Thus the integral is well
defined, since $Rf(\varphi(tz))g(tz)/t=O(1)$.  It is holomorphic by
uniform convergence on compact subsets.  Moreover,
\[
 C_{g,\varphi}f(rz)
 =
 \int_0^r Rf(\varphi(sz))g(sz)\,\frac{\dd s}{s}.
\]
Hence
\[
 r\frac{\dd}{\dd r}C_{g,\varphi}f(rz)
 =
 Rf(\varphi(rz))g(rz).
\]
Putting $r=1$ gives the identity.
\end{proof}

\begin{unnumberedremark}
Let $\alpha>0$ and define
\[
 C^{(\alpha)}_{g,\varphi}f(z)
 =
 \int_0^1 Rf(\varphi(tz))g(tz)t^{\alpha-1}\,\dd t .
\]
The integral is proper at $t=0$.  Indeed, $\varphi(0)=0$ and $Rf(0)=0$
give $Rf(\varphi(tz))=O(t)$ on compact subsets, so the integrand is
$O(t^\alpha)$.
Then the same proof gives
\[
 (R+\alpha)C^{(\alpha)}_{g,\varphi}f
 =
 g\,Rf\circ\varphi .
\]
The main operator is the limiting case $\alpha=0$.
\end{unnumberedremark}

\begin{proposition}\label{prop:companion-reduction}
Let $1<q<\infty$, $g\in\Hol(\Omega)$, and
$\varphi\in\mathcal H(\Omega)$ with $\varphi(0)=0$.  Define
$\mu_{g,\varphi,q}$ by
\[
 \int_\Omega h(w)\delta(w)^q\,\dd\mu_{g,\varphi,q}(w)
 =
 \int_{\{\varphi(z)\ne0\}}
 h(\varphi(z))|g(z)|^q\delta(z)^q\,\dd v(z),
 \qquad h\ge0 .
\]
For $f\in\Hol(\overline\Omega)$,
\[
 \|C_{g,\varphi} f\|_{A^q}^q
 \asymp
 \int_\Omega |Rf(w)|^q\delta(w)^q\,
 \dd\mu_{g,\varphi,q}(w).
\]
\end{proposition}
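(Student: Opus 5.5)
The plan is to combine the radial Littlewood--Paley formula with the companion identity and then change variables. Fix $f\in\Hol(\overline\Omega)$ and put $h=C_{g,\varphi}f$. By \cref{lem:companion-identity}, $h\in\Hol(\Omega)$, $h(0)=0$, and $Rh=g\,(Rf\circ\varphi)$. So \cref{lem:radial-lp-companion} applies to $h$ and gives
\[
 \|C_{g,\varphi}f\|_{A^q}^q\asymp\int_\Omega |g(z)|^q|Rf(\varphi(z))|^q\delta(z)^q\,\dd v(z).
\]
One caveat must be settled first. \cref{lem:radial-lp-companion} is an equivalence of two quantities that may both be infinite. Its proof never assumes $h\in A^q$ a priori: the bound of $\|h\|_{A^q}$ by the weighted integral is a direct pointwise and Hardy-type estimate, and the reverse bound is Cauchy plus Whitney. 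So the comparison holds in $[0,\infty]$, and I would note this explicitly.

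Next, I split the integral over $\{\varphi(z)=0\}$ and $\{\varphi(z)\ne0\}$. On the first set $Rf(\varphi(z))=Rf(0)=0$, so the integrand vanishes identically and that set contributes nothing. This is exactly why the definition of $\mu_{g,\varphi,q}$ omits $\varphi^{-1}(\{0\})$.

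On the remaining set, I apply the defining identity of $\mu_{g,\varphi,q}$ with the nonnegative Borel function $h_0(w)=|Rf(w)|^q$. The result is
\[
 \int_{\{\varphi\ne0\}}|Rf(\varphi(z))|^q|g(z)|^q\delta(z)^q\,\dd v(z)=\int_\Omega|Rf(w)|^q\delta(w)^q\,\dd\mu_{g,\varphi,q}(w).
\]
This step needs $\mu_{g,\varphi,q}$ to be a well-defined positive Borel measure satisfying that identity for all $h\ge0$. I would justify it via the equivalent formula from the introduction: take the pushforward under $\varphi$ of $|g|^q\delta^q\,\dd v$ restricted to $\{\varphi\ne0\}$, then multiply by $\delta(w)^{-q}$. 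This is legitimate because $\delta>0$ on $\Omega$, and then monotone class or simple-function approximation gives the identity for all nonnegative $h$.

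The main obstacle is only this bookkeeping: both sides may be infinite, and $\mu_{g,\varphi,q}$ may be singular. No local finiteness is needed for the proposition itself, since all manipulations are with nonnegative functions. The remaining step is routine once the equivalence in \cref{lem:radial-lp-companion} is confirmed to hold without the a priori assumption $h\in A^q$.
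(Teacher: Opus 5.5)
Your proposal is correct and follows the paper's own proof: apply \cref{lem:companion-identity} to get $h(0)=0$ and $Rh=g\,(Rf\circ\varphi)$, apply \cref{lem:radial-lp-companion} to $h=C_{g,\varphi}f$, and then change variables with the defining identity of $\mu_{g,\varphi,q}$. Your extra remarks fill in bookkeeping that the paper leaves implicit. These are that the comparison holds in $[0,\infty]$, that $\varphi^{-1}(\{0\})$ contributes nothing because $Rf(0)=0$, and that $\mu_{g,\varphi,q}$ exists as the push-forward of $|g|^q\delta^q\,\dd v$ restricted to $\{\varphi\ne0\}$, weighted by $\delta(w)^{-q}$.
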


\begin{proof}
By \cref{lem:companion-identity},
\[
 C_{g,\varphi} f(0)=0,
 \qquad
 R(C_{g,\varphi} f)(z)=g(z)Rf(\varphi(z)).
\]
Using \cref{lem:radial-lp-companion} with $h=C_{g,\varphi} f$, we get
\[
\begin{aligned}
 \|C_{g,\varphi} f\|_{A^q}^q
 &\asymp
 \int_\Omega |R(C_{g,\varphi} f)(z)|^q\delta(z)^q\,\dd v(z)\\
 &=
 \int_\Omega |Rf(\varphi(z))|^q|g(z)|^q\delta(z)^q\,\dd v(z)\\
 &=
 \int_\Omega |Rf(w)|^q\delta(w)^q\,\dd\mu_{g,\varphi,q}(w).
\end{aligned}
\]
\end{proof}

\subsection{Derivative peak functions}

The next lemma supplies test functions for necessity.  It is the
derivative analogue of the normalized kernel test.

\begin{lemma}\label{lem:mcneal-peaks}
Let $1<p<\infty$.  There are constants $\epsilon>0$ and
$C>0$ such that, for every $a\in\Omega$, there is
$P_a\in\Hol(\Omega)$ with
\[
 \|P_a\|_{A^p}\le C,
\]
\[
 |P_a(z)|\ge C^{-1}\V_\rho(a)^{-1/p},
 \qquad z\in\Q_{\epsilon\rho}(a),
\]
\[
 |RP_a(z)|
 \le
 C\delta(a)^{-1}\V_\rho(a)^{-1/p},
 \qquad z\in\Q_{\epsilon\rho}(a).
\]
In particular, $P_a\to0$ uniformly on compact subsets as
$a\to\partial\Omega$.
\end{lemma}

\begin{proof}
Write $V=\V_\rho(a)$ and first take $a$ near the boundary.
Let $Q_j(a)$ denote the McNeal polydisc at scale
$2^j\rho\delta(a)$ and put $W_j=v(Q_j(a))$.
Stop when this scale reaches a fixed positive number.
The off-diagonal estimate of \cite{McNeal1994}, in the quasimetric
form used in \cite[Proposition~3.4]{KrantzLi1995}, and finite-type
volume growth give
\[
 \begin{gathered}
 |K(z,a)|\le C W_j^{-1}
 \quad(z\in Q_{j+1}(a)\setminus Q_j(a)),\\
 W_{j+1}\le C W_j,\qquad W_j/V\ge c2^{2j}.
 \end{gathered}
\]
The last inequality follows already from growth in the complex
normal direction.  Off the last polydisc the kernel is bounded
uniformly by its regularity away from the boundary diagonal.
For the normal affine function $L_a$ used in
\cref{lem:derivative-peaks}, the normal radius also gives
\[
 |L_a(z)|\le C2^{j+1}\quad(z\in Q_{j+1}(a)),
 \qquad V\le C\delta(a)^2.
\]

Choose an integer $m$ so that $2(mp-1)>p$ and $m-1/p\ge1$.
Set
\[
 P_a(z)=V^{-1/p}\left(\frac{K(z,a)}{K(a,a)}\right)^m.
\]
Since $K(a,a)\asymp V^{-1}$, on the $j$th shell
\[
 |P_a(z)|\le C V^{-1/p}(V/W_j)^m.
\]
The initial polydisc contributes a bounded amount to
$\int |L_aP_a|^p$.  On the remaining shells,
\[
 \begin{aligned}
 \int_{\bigcup_j(Q_{j+1}\setminus Q_j)}|L_aP_a|^p\,\dd v
 &\le C\sum_j 2^{jp}\frac{W_{j+1}}{V}
                  \left(\frac{V}{W_j}\right)^{mp}\\
 &\le C\sum_j 2^{-j[2(mp-1)-p]}\le C.
 \end{aligned}
\]
Off the last polydisc, $|L_a|\le C\delta(a)^{-1}$ and
$|K(z,a)/K(a,a)|\le CV$.  Thus the remaining integral is at most
\[
 C\delta(a)^{-p}V^{mp-1}\le C.
\]
The same bounds with $L_a$ omitted give $\|P_a\|_{A^p}\le C$.
The near-diagonal estimate gives
\[
 \left|\frac{K(z,a)}{K(a,a)}\right|\ge c,
 \qquad z\in\Q_{\epsilon\rho}(a).
\]
This proves the stated lower bound.  Cauchy's estimate on a
slightly larger polydisc gives
\[
 |RP_a(z)|\le C\delta(a)^{-1}V^{-1/p},
 \qquad z\in\Q_{\epsilon\rho}(a).
\]
For every compact $K\Subset\Omega$, off-diagonal regularity gives
$\sup_{z\in K}|K(z,a)|\le C_K$.  Consequently,
\[
 \sup_{z\in K}|P_a(z)|
 \le C_K V^{m-1/p}\le C_K\delta(a)^2.
\]
For $a$ in a fixed compact core, take $P_a=V^{-1/p}$ instead.
Then $\|P_a\|_{A^p}\le C$ and the two local estimates hold.
\end{proof}

\begin{lemma}\label{lem:derivative-peaks}
Let $1<p<\infty$.  There are constants $\epsilon>0$, $\eta>0$, and
$C>0$ with the following property.  For every $a\in\Omega$ with
$\delta(a)<\eta$ there is
$F_a\in A^p(\Omega)$ such that
\[
 \|F_a\|_{A^p}\le C,
\]
\[
 |RF_a(z)|\delta(z)\ge C^{-1}\V_\rho(a)^{-1/p},
 \qquad z\in\Q_{\epsilon\rho}(a),
\]
and, as $a\to\partial\Omega$,
\[
 F_a\to0
 \quad\text{uniformly on compact subsets of }\Omega .
\]
\end{lemma}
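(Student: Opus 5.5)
We will build $F_a$ from the peak function $P_a$ of \cref{lem:mcneal-peaks} by multiplying it by the normal affine function that was already mentioned in that proof. Fix $a$ with $\delta(a)<\eta$ and let $\pi(a)\in\partial\Omega$ be a nearest boundary point, with outward unit normal $\nu=\nu(\pi(a))$. Put
\[
 L_a(z)=\frac{\langle z-a,\nu\rangle}{\delta(a)} ,
\qquad
F_a=L_aP_a .
\]
The norm bound $\|F_a\|_{A^p}\le C$ is exactly the weighted shell estimate $\int|L_aP_a|^p\,\dd v\le C$ that was carried out in the proof of \cref{lem:mcneal-peaks}. That proof used $|L_a|\le C2^{j}$ on the $j$th McNeal shell, $|L_a|\le C\delta(a)^{-1}$ off the last polydisc, and the choice of $m$ with $2(mp-1)>p$. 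For convergence on compact sets, fix $K\Subset\Omega$. There $|L_a|\le C\delta(a)^{-1}$ and $\sup_K|P_a|\le C_K\V_\rho(a)^{m-1/p}$. Since $m-1/p\ge1$ and $\V_\rho(a)\le C\delta(a)^2$, we get $\sup_K|F_a|\le C_K\delta(a)\to0$.

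The main point is the lower bound on $|RF_a|\delta$ near $a$. By the product rule,
\[
 RF_a=P_a\,RL_a+L_a\,RP_a,
\qquad
 RL_a(z)=\frac{\langle z,\nu\rangle}{\delta(a)} .
\]
For the first term, the transversality estimate $\operatorname{Re}\langle\xi,\nu(\xi)\rangle\ge c_0$ holds at $\xi=\pi(a)$. For $z\in\Q_{\epsilon\rho}(a)$ we have $|z-\pi(a)|\le C\delta(a)$, so $|\langle z,\nu\rangle|\ge c_0/2$ once $\eta$ is small. Hence $|P_a\,RL_a|\ge c\,\delta(a)^{-1}\V_\rho(a)^{-1/p}$ on $\Q_{\epsilon\rho}(a)$. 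For the second term, \cref{lem:mcneal-peaks} gives $|RP_a|\le C\delta(a)^{-1}\V_\rho(a)^{-1/p}$. The normal radius of $\Q_{\epsilon\rho}(a)$ is comparable to $\epsilon\rho\,\delta(a)$, so $|L_a|\le C\epsilon$ there.

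This is the step I expect to be the main obstacle. We must shrink $\epsilon$ so that $C\epsilon$ is below half of $c$. To do this safely, the constant in the bound on $RP_a$ must not degrade as $\epsilon$ shrinks, and neither may the constant in the lower bound on $|P_a|$. The upper bound is fine: it comes from the Cauchy estimate on a fixed polydisc $\Q_{\epsilon_0\rho}(a)$, so it holds uniformly for all $\epsilon\le\epsilon_0$. The lower bound $|K(z,a)/K(a,a)|\ge c$ likewise persists on smaller polydiscs. After this choice, $|RF_a|\ge c'\delta(a)^{-1}\V_\rho(a)^{-1/p}$ on $\Q_{\epsilon\rho}(a)$. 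Multiplying by $\delta(z)\asymp\delta(a)$ then yields the claimed lower bound.
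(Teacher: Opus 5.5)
Your proposal is correct and follows the paper's proof essentially step for step: the same $F_a=L_aP_a$, the same shell estimate from the previous lemma for the norm, the same product-rule splitting with $\epsilon$ shrunk against constants that do not depend on $\epsilon$, and the same $\delta(a)^{-1}\cdot\delta(a)^{2}$ decay on compact sets.

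One slip needs fixing. The claim $|z-\pi(a)|\le C\delta(a)$ on $\Q_{\epsilon\rho}(a)$ is false in general, because the complex tangential McNeal radii are much larger than $\delta(a)$ (at least of order $\delta(a)^{1/2}$). You only need the normal component, though. Write $\langle z,\nu\rangle=\langle\pi(a),\nu\rangle-\delta(a)+\delta(a)L_a(z)$ and use the bound $|L_a|\le C\epsilon$ that you already invoke. This gives $\operatorname{Re}\langle z,\nu\rangle\ge c_0-(1+C\epsilon)\delta(a)\ge c_0/2$ for small $\eta$.
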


\begin{proof}
Let $\nu(a)$ be the complex normal direction at a nearest boundary
point to $a$.  By radial transversality, the radial vector has a normal
component bounded below.  Let
\[
 L_a(z)=\frac{\langle z-a,\nu(a)\rangle}{\delta(a)} .
\]
On $\Q_{\rho}(a)$ the function $L_a$ is uniformly bounded and
\[
 |RL_a(z)|\ge c\,\delta(a)^{-1},
 \qquad z\in\Q_{\epsilon\rho}(a).
\]
Take $P_a$ from \cref{lem:mcneal-peaks}.  Then
\[
 \|P_a\|_{A^p}\asymp1,\qquad
 |P_a(z)|\ge c\,\V_\rho(a)^{-1/p},
 \qquad z\in\Q_{\epsilon\rho}(a),
\]
and
\[
 |RP_a(z)|\le C\delta(a)^{-1}\V_\rho(a)^{-1/p}
 \quad\text{on }\Q_{\epsilon\rho}(a).
\]
Put
\[
 F_a(z)=c_1\,L_a(z)P_a(z).
\]
The annular estimate in the proof of \cref{lem:mcneal-peaks} gives
\[
 \int_\Omega |L_a(z)P_a(z)|^p\,\dd v(z)
 \le C.
\]
Fix a small $c_1>0$.  Then
\[
 \|F_a\|_{A^p}\le C.
\]
Also
\[
 R F_a=(RL_a)P_a+L_aRP_a .
\]
On $\Q_{\epsilon\rho}(a)$ we have
\[
 |(RL_a)P_a|
 \ge
 c\delta(a)^{-1}\V_\rho(a)^{-1/p}.
\]
Moreover,
\[
 |L_a(z)|\le C\epsilon,
 \qquad z\in\Q_{\epsilon\rho}(a),
\]
and hence
\[
 |L_aRP_a|
 \le
 C\epsilon\,\delta(a)^{-1}\V_\rho(a)^{-1/p}.
\]
Choose $\epsilon$ so small that the last term is at most one half of
the preceding lower bound.  Since $\delta(z)\asymp\delta(a)$ on
$\Q_{\epsilon\rho}(a)$, we get
\[
 |RF_a(z)|\delta(z)\ge C^{-1}\V_\rho(a)^{-1/p}
 \quad z\in\Q_{\epsilon\rho}(a).
\]
Finally, let $K\Subset\Omega$.  On $K$,
\[
 |L_a(z)|\le C_K\delta(a)^{-1}.
\]
Using the preceding choice of the peak power, we have
\[
 \sup_{z\in K}|P_a(z)|\le C_K\delta(a)^2=o(\delta(a)).
\]
Then
\[
 \sup_{z\in K}|F_a(z)|
 \le C_K\delta(a)^{-1}\sup_{z\in K}|P_a(z)|
 \to0 .
\]
The lemma is only used near the boundary.
\end{proof}

\begin{lemma}[Projection atoms for separated boundary cells]
\label{lem:projection-atoms}
Fix $1<p<\infty$.  After dividing a sufficiently fine boundary
lattice into finitely many separated sublattices, each sublattice
$\{a_j\}$ admits functions $G_j\in A^p(\Omega)$ and a fixed
$\epsilon>0$ such that
\[
 \left\|\sum_j c_jG_j\right\|_{A^p}
 \le C\left(\sum_j|c_j|^p\right)^{1/p},
 \qquad
 \delta(z)|RG_j(z)|\ge c V_j^{-1/p}
 \quad(z\in\Q_{\epsilon\rho}(a_j)).
\]
The sums are initially finite.  Also $G_j\to0$ locally uniformly
as $a_j\to\partial\Omega$.
\end{lemma}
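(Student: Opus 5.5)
We take $G_j=F_{a_j}$, the derivative peak functions of \cref{lem:derivative-peaks}, but with a larger kernel power $m$ in the underlying $P_a$ of \cref{lem:mcneal-peaks}. The lower bound $\delta|RG_j|\ge cV_j^{-1/p}$ on $\Q_{\epsilon\rho}(a_j)$ and the locally uniform decay are then inherited verbatim. Everything reduces to the $\ell^p$ synthesis bound for $\sum_jc_jG_j$.

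\textbf{Step 1: shell decay.} From the shell estimates in the proof of \cref{lem:mcneal-peaks}, with $|L_a|\le C2^{k}$ on the $k$th shell, we get a pointwise bound
\[
 |G_j(z)|\le C\,V_j^{-1/p}\,2^{k}\bigl(V_j/W_k(a_j)\bigr)^{m}
 \le C\,V_j^{-1/p}\,2^{-k(2m-1)},
 \qquad z\in Q_{k+1}(a_j)\setminus Q_k(a_j).
\]
So $G_j$ decays like a high power of the McNeal quasidistance $d(z,a_j)$. Off the last polydisc it is bounded by $C\delta(a_j)^{-1}V_j^{m-1/p}$, which is tiny.

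\textbf{Step 2: Schur/H\"older estimate.} Fix $z$ and write $|\sum c_jG_j(z)|\le\sum_j|c_j||G_j(z)|$. Apply H\"older with weights:
\[
 \Bigl(\sum_j|c_j||G_j|\Bigr)^p\le\Bigl(\sum_j|c_j|^p|G_j|\,V_j^{1/p'}\Bigr)\Bigl(\sum_j|G_j|V_j^{-1/p}\Bigr)^{p-1}.
\]
It then suffices to show two bounds:
\[
 \sup_z\sum_j|G_j(z)|V_j^{-1/p}\cdot V_j^{2/p}\ \lesssim\ \text{(a quantity making things balance)}
 \qquad\text{and}\qquad
 \int_\Omega|G_j|\,dv\lesssim V_j^{1-1/p}.
\]
More cleanly, we run the standard Schur test with the test function $V(z)^{-\theta}$ for a small $\theta$, using the kernel $V_j^{1/p}|G_j(z)|$ regarded as an operator from $\ell^p$ to $L^p$. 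The needed inputs are:
\begin{enumerate}[label=(\alph*)]
\item For fixed $z$, $\sum_j V_j|G_j(z)|^{\cdot}$-type sums over lattice points in the shells around $z$ are bounded. This uses that the number of lattice points with $d(a_j,z)\sim 2^k$ is at most $CW_k(z)/V(z)\lesssim 2^{Nk}$ by finite-type volume doubling, together with the quasi-symmetry $V(a_j)\asymp V(z)$ up to powers of $2^k$.
\item For fixed $j$, the integral $\int|G_j|^{\cdot}$ is bounded; this is the shell sum already performed in \cref{lem:mcneal-peaks}.
\end{enumerate}
Choosing $m$ large absorbs all polynomial growth factors $2^{Nk}$.

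\textbf{Separation and main obstacle.} Separation of the sublattices (pairwise McNeal quasidistance at least a large constant) is used in two places. It ensures the counting in (a) starts with at most one lattice point in each cell, and it keeps the near-diagonal region of $G_j$ disjoint from that of the other $G_i$; this does not affect the upper bound but matches the later Khinchine application. The main obstacle is (a): the comparison of $V(a_j)$ with $V(z)$ and of the shell $Q_k(a_j)$ with $Q_{k'}(z)$. McNeal's quasimetric is only symmetric up to constants and the volumes differ by polynomial factors in $2^k$. I would handle this with the engulfing property of McNeal polydiscs: $z\in Q_k(a)$ implies $a\in Q_{k+C}(z)$ and $V(a)\le C2^{Nk}V(z)$. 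Taking $m$ beyond a threshold depending on $N$, $p$ and $n$ then closes the Schur sums. For finite sums everything converges absolutely, giving the statement.
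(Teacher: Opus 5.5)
\textbf{Gap in the synthesis bound.} Taking $G_j=F_{a_j}$ does give you the lower bound and the locally uniform decay for free. This is a genuinely different route from the paper's. The paper takes $G_j=Bu_j$ with disjointly supported atoms $u_j$, so the $\ell^p$ synthesis bound is immediate from $L^p$-boundedness of the Bergman projection. All of its work goes into the lower bound: it proves $\delta(a)|R_zK(a,a)|\gtrsim K(a,a)$ using Xiong's log-convexity of $K(z,z)$ along radial segments. In your route the entire content is the synthesis bound, and the sketch you give for it does not work.

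The counting in (a) and the engulfing claim are false when shells are scaled by the depth of their own centre. In the disc, take $a=1-2^l\delta$ and $z=1-\delta$. Then $z\in Q_{k_0}(a)$ for a fixed $k_0$ (any $k_0$ with $2^{k_0}\rho\ge1$) and every $l$, while $V(a)/V(z)\asymp4^l$. So neither $a\in Q_{k_0+C}(z)$ nor $V(a)\le C2^{Nk_0}V(z)$ holds. Consequently the number of lattice points $a_j$ with $z$ in a shell of bounded index is not bounded uniformly in $z$.

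\textbf{Why enlarging $m$ does not help.} For such pairs,
$V_j^{1/p}|G_j(z)|=c_1|L_{a_j}(z)|\,|K(z,a_j)/K(a_j,a_j)|^m\asymp1$, as one checks in the disc or the ball. Hence the unweighted sum $\sum_jV_j^{1/p}|G_j(z)|$ is $\gtrsim\log(1/\delta(z))$. The divergence comes from the roughly $\log(1/\delta(z))$ depth levels $\delta(a_j)\sim2^l\delta(z)$ at bounded shell index, not from polynomial growth in $k$. No choice of $m$ absorbs it.

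\textbf{What the repair requires.} The test function has to do this work, but your inputs (a) and (b) are stated without it. With $h=\delta^{-\varepsilon}$ (or $V^{-\theta}$), the two Schur conditions become the following.
\begin{enumerate}[label=(\roman*)]
\item $\sum_jV_j^{1/p}|G_j(z)|h(a_j)^{p'}\lesssim h(z)^{p'}$. This now converges geometrically over the depth levels.
\item $\int_\Omega|G_j|h^p\,\dd v\lesssim V_j^{1-1/p}h(a_j)^p$. This is not the unweighted shell sum from \cref{lem:mcneal-peaks}. It needs integrability of $\delta^{-\varepsilon p}$, with $\varepsilon p<1$, over McNeal polydiscs that reach the boundary.
\end{enumerate}
Once both are verified, your approach is a valid alternative. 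It avoids Xiong's theorem, at the cost of a Coifman--Rochberg type Schur argument carried out in McNeal geometry.
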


\begin{proof}
Let $B$ be the ordinary Bergman projection.  It is bounded on
$L^p(\Omega)$ for $1<p<\infty$ by \cite{McNealStein1994}.
We first prove a lower bound for the derivative kernel.
Xiong proved the convexity of $z\mapsto\log K(z,z)$ on convex
domains \cite[Theorem~1.1]{Xiong2026}.  Write $a=r\xi$ with
$\xi\in\partial\Omega$ and put $d=1-r\asymp\delta(a)$.
Fix a large $L>1$.  If $d$ is small, put $b=(1-Ld)\xi$ and
$t=(1-Ld)/r$.  Then $b=ta$ and $t\ge1/2$.

Convexity and $0\in\Omega$ give $t\Q_\rho(a)\subset\Omega$.
Radial transversality gives $\delta(b)\ge cLd$.  Hence
$B(b,cLd/2)\subset\Omega$.  In the distinguished coordinates at
$a$, write the radii of $\Q_\rho(a)$ as
$\tau_1(a),\ldots,\tau_{n-1}(a)$ in the tangential directions and
$\tau_n(a)\asymp d$ in the normal direction.  The set
\[
 \tfrac12\bigl(t\Q_\rho(a)+B(b,cLd/2)\bigr)
 \subset\Omega
\]
contains a polydisc centered at $b$ with tangential radii at least
$c_\rho\tau_j(a)$ and normal radius at least $c_\rho Ld$.
This follows by taking coordinate polydiscs inside the two summands.
Its volume is therefore at least
\[
 c_\rho(Ld)^2\prod_{j<n}\tau_j(a)^2
 \asymp c_\rho L^2\V_\rho(a).
\]

The mean-value inequality on this polydisc gives
$K(b,b)\le C_\rho L^{-2}\V_\rho(a)^{-1}$.  McNeal's diagonal
estimate gives $K(a,a)\ge c_\rho\V_\rho(a)^{-1}$.  Therefore
\[
 \frac{K(a,a)}{K(b,b)}
 \ge c_\rho L^2
\]
uniformly in $a$ near the boundary.  Fix $L$ so large that the ratio
exceeds $2$.  Convexity along the radial segment bounds the
derivative at $r$ below by the secant slope between $b$ and $a$.
Hence
\[
 \frac{d}{dr}\log K(r\xi,r\xi)
 \ge\frac{\log 2}{(L-1)d}.
\]
The derivative on the left is
$2\operatorname{Re}(\xi\cdot\partial_zK(a,a))/K(a,a)$.
Also $r$ stays away from zero.  Hence
\begin{equation}\label{eq:diagonal-radial-kernel}
 \delta(a)|R_zK(a,a)|\ge cK(a,a)\ge c'\V_\rho(a)^{-1}.
\end{equation}
Apply the near-diagonal kernel estimate and Cauchy's inequality in
the second variable.  For a fixed, sufficiently small $\epsilon>0$,
\[
 \sup_{w\in\Q_{\epsilon\rho}(a)}
 \delta(a)|R_zK(a,w)-R_zK(a,a)|
 \le C\epsilon^\gamma\V_\rho(a)^{-1}
\]
for some $\gamma>0$.  Choose $\epsilon$ so that the right side is
less than half the lower bound in
\eqref{eq:diagonal-radial-kernel}.  It follows that
\[
 \delta(a)|R_zK(a,w)|\ge c\V_\rho(a)^{-1}
 \quad(w\in E_a:=\Q_{\epsilon\rho}(a)).
\]
These constants are uniform in the boundary centers.  Integrating
over $E_a$ gives
\begin{equation}\label{eq:local-derivative-mass}
 \delta(a)\int_{E_a}|R_zK(a,w)|\,\dd v(w)\ge c_\epsilon,
 \qquad v(E_a)\asymp\V_\rho(a).
\end{equation}
Color the lattice so the sets $E_{a_j}$ in each color are disjoint.
Define
\[
 u_j(w)=V_j^{-1/p}{\bf1}_{E_{a_j}}(w)
 \frac{\overline{R_zK(a_j,w)}}{|R_zK(a_j,w)|},
 \qquad G_j=Bu_j,
\]
with the quotient set equal to zero where its denominator vanishes.
Then
\[
 \left\|\sum_j c_jG_j\right\|_{A^p}
 \le C\left\|\sum_jc_ju_j\right\|_{L^p}
 \le C\left(\sum_j|c_j|^p\right)^{1/p}.
\]
Each $u_j$ lies in $L^2\cap L^p$.  We may therefore differentiate
its kernel integral.  By \eqref{eq:local-derivative-mass},
\[
 \delta(a_j)|RG_j(a_j)|
 =V_j^{-1/p}\delta(a_j)
   \int_{E_{a_j}}|R_zK(a_j,w)|\,\dd v(w)
 \ge c V_j^{-1/p}.
\]
The local Cauchy estimate bounds the oscillation of $\delta RG_j$
on $\Q_{\epsilon'\rho}(a_j)$ by $C\epsilon' V_j^{-1/p}$.
Choose $\epsilon'>0$ small.  The asserted lower bound follows.

As the cells leave every compact set, the disjointly supported
$u_j$ converge weakly to zero in $L^p$.  The operator $B$ is bounded
on $L^p$.  Point evaluations are bounded on $A^p$.  Hence
$G_j\to0$ uniformly on compact subsets.
\end{proof}

\begin{unnumberedlemma}
Let $1<p<\infty$.  If $\{f_m\}$ is bounded in $A^p(\Omega)$ and
$f_m\to0$ uniformly on compact subsets of $\Omega$, then $f_m$ converges
weakly to $0$ in $A^p(\Omega)$.  Hence $Kf_m\to0$ in norm for every
compact operator $K:A^p(\Omega)\to X$.
\end{unnumberedlemma}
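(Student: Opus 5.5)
The plan is to prove the two assertions in turn, using reflexivity of $A^p(\Omega)$ for the first and a standard subsequence argument for the second.

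\emph{Weak convergence.} For $1<p<\infty$ the space $A^p(\Omega)$ is a closed subspace of $L^p(\Omega)$, since point evaluations are locally uniformly bounded. Hence $A^p(\Omega)$ is reflexive, and its bounded sets are weakly sequentially relatively compact. Suppose $f_m\not\to0$ weakly. Then there are $\Lambda\in(A^p)^*$, $\eta>0$, and a subsequence with $|\Lambda(f_{m_k})|\ge\eta$. By reflexivity, a further subsequence converges weakly to some $f\in A^p$. Point evaluation at $z\in\Omega$ is a bounded functional, so $f(z)=\lim f_{m_k}(z)=0$ for every $z$, because $f_m\to0$ pointwise. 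Thus $f=0$, and $\Lambda(f_{m_k})\to0$ along that subsequence, which is a contradiction. Instead of this subsequence argument one could use Hahn--Banach: every functional on $A^p$ is given by some $u\in L^{p'}(\Omega)$. Approximating $u$ in $L^{p'}$ by $u\mathbf 1_{K}$ with $K\Subset\Omega$ and using the uniform convergence on $K$ together with the uniform bound $\|f_m\|_{A^p}\le M$ gives the same conclusion directly. I will give the reflexivity argument, which only needs pointwise convergence.

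\emph{Compact operators.} Let $T:A^p\to X$ be compact, with $T$ written for the operator to avoid clashing with the kernel notation $K$. Suppose $\|Tf_m\|\not\to0$. Choose a subsequence with $\|Tf_{m_k}\|\ge\eta>0$. Since $\{f_{m_k}\}$ is bounded, compactness gives a further subsequence with $Tf_{m_k}\to y$ in norm. Bounded operators are weak-to-weak continuous, so $Tf_{m_k}\to T0=0$ weakly. Norm limits and weak limits agree, so $y=0$. This contradicts $\|Tf_{m_k}\|\ge\eta$.

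There is no serious obstacle here. The only point to check is that $A^p$ is closed in $L^p$, so that it is reflexive and point evaluations are continuous. This follows from the local mean-value inequality $|f(z)|\le C\delta(z)^{-2n/p}\|f\|_{A^p}$ on Euclidean balls $B(z,\delta(z))$, as already used in the proof of \cref{lem:projection-atoms}.
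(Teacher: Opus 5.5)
Your proof is correct and follows the paper's own argument. Reflexivity of $A^p(\Omega)$ and bounded point evaluations force every weak subsequential limit of $\{f_m\}$ to be $0$, and the subsequence argument then shows that any norm limit of $Kf_m$ must equal the weak limit $0$; your remark that $A^p(\Omega)$ is closed in $L^p(\Omega)$ via the local mean-value inequality supplies a detail the paper leaves implicit.
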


\begin{proof}
The space $A^p(\Omega)$ is reflexive.  Every subsequence of
$\{f_m\}$ has a weakly convergent subsequence.  Point evaluations
are bounded, and $f_m\to0$ on compact subsets.  Hence every such
weak limit is zero.  It follows that $f_m$ converges weakly to zero.

Now let $K$ be compact.  Every subsequence of $\{Kf_m\}$ has a
norm-convergent subsequence.  Its limit must be zero by weak
convergence.  Thus $Kf_m\to0$ in norm.
\end{proof}

\section{Boundedness and Compactness}

We first treat derivative embeddings for arbitrary positive measures.
We then apply the result to the pull-back measure of a companion.

\subsection{Derivative Carleson embeddings}

For a positive locally finite Borel measure $\mu$ set
\[
 \Acal_{\mu,q,\rho}(z)
 =
 \left(
 \frac{\mu(\Q_\rho(z))}{\V_\rho(z)}
 \right)^{1/q}
\]
and
\[
 \Dcal_{\mu,p,q,\rho}(z)
 =
 \V_\rho(z)^{1/q-1/p}\Acal_{\mu,q,\rho}(z).
\]
For the symbol $g$ we use
\[
 \dd\mu_g(z)=|g(z)|^q\,\dd v(z).
\]
Then
\[
 \Acal_{g,q,\rho}=\Acal_{\mu_g,q,\rho},
 \qquad
 \Dcal_{g,p,q,\rho}=\Dcal_{\mu_g,p,q,\rho}.
\]
For the generalized companion operator we use
\[
 \int_\Omega h(w)\delta(w)^q\,\dd\mu_{g,\varphi,q}(w)
 =
 \int_{\{\varphi(z)\ne0\}}
 h(\varphi(z))|g(z)|^q\delta(z)^q\,\dd v(z),
 \qquad h\ge0 .
\]
The omitted set gives no contribution to $C_{g,\varphi}$, because
$Rf(0)=0$ for every holomorphic $f$.
Thus
\[
 \mu_{g,\varphi,q}(E)
 =
 \int_{\varphi^{-1}(E\setminus\{0\})}
 |g(z)|^q
 \left(\frac{\delta(z)}{\delta(\varphi(z))}\right)^q
 \dd v(z).
\]
We write
\[
 \Acal_{g,\varphi,q,\rho}
 =
 \Acal_{\mu_{g,\varphi,q},q,\rho},
 \qquad
 \Dcal_{g,\varphi,p,q,\rho}
 =
 \Dcal_{\mu_{g,\varphi,q},p,q,\rho}.
\]

The radial derivative vanishes at the origin.  To measure the norm (as
opposed to membership or boundary behaviour), put
\[
 \dd\nu_{\mu,q}(w)=|w|^q\,\dd\mu(w),\qquad
 \Dcal^{\circ}_{\mu,p,q,\rho}
 =\Dcal_{\nu_{\mu,q},p,q,\rho}.
\]
We write \(\Dcal^{\circ}_{g,\varphi,p,q,\rho}\) for the latter quantity
when \(\mu=\mu_{g,\varphi,q}\).  When \(q=2\), set
\[
 \Acal^{\circ}_{\mu,2,\rho}(z)
 =\left(\frac{\nu_{\mu,2}(\Q_\rho(z))}
 {\V_\rho(z)}\right)^{1/2},\qquad
 \Acal^{\circ}_{g,\varphi,2,\rho}
 =\Acal^{\circ}_{\mu_{g,\varphi,2},2,\rho}.
\]
Near the boundary, corrected and uncorrected averages are comparable.
On a fixed compact set, corrected averages measure the mass of
\(|w|^q\mu\).  For a locally finite measure, both choices give the
same membership and boundary-vanishing conditions.  Their norms
need not be comparable uniformly in \(\mu\).

\begin{theorem}\label{thm:derivative-carleson}
Let $1<p,q<\infty$ and let $\mu$ be a positive locally finite Borel
measure on $\Omega$ with $\mu(\{0\})=0$.  Consider
\[
 \int_\Omega |Rf(z)|^q\delta(z)^q\,\dd\mu(z)
 \le C\|f\|_{A^p}^q .
\]
Here and below $C$ in a norm comparison denotes the least admissible
constant in this inequality.
If $p\le q$, then this estimate holds for all
$f\in A^p(\Omega)$ if and only if
\[
 \sup_{z\in\Omega}\Dcal_{\mu,p,q,\rho}(z)<\infty .
\]
Moreover,
\[
 C^{1/q}\asymp
 \|\Dcal^{\circ}_{\mu,p,q,\rho}\|_{L^\infty}.
\]
If $q<p$ and
\[
 r=\frac{pq}{p-q},
\]
then the estimate holds if and only if
\[
 \Dcal_{\mu,p,q,\rho}\in L^r(\Omega,d\lambda_\Omega).
\]
Moreover,
\[
 C^{1/q}\asymp
 \|\Dcal^{\circ}_{\mu,p,q,\rho}\|_{L^r(d\lambda_\Omega)} .
\]
\end{theorem}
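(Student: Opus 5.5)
For sufficiency in both ranges, I would discretize with a McNeal lattice $\{a_j\}$, $Q_j=\Q_\rho(a_j)$, and apply \cref{lem:local-radial-cauchy}:
\[
 \int_\Omega |Rf|^q\delta^q\,\dd\mu
 \le \sum_j \mu(Q_j)\sup_{Q_j}|Rf|^q\delta^q
 \le C\sum_j \frac{\mu(Q_j)}{V_j}\int_{Q_j^*}|f|^q\,\dd v .
\]
To obtain the corrected weight $|w|^q$, I would treat the region near the origin separately. On a fixed ball $B_0$ about $0$ we have $|Rf(w)|\le C|w|\,\|f\|_{A^p}$, by Cauchy estimates and the fact that $Rf$ vanishes at $0$. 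This term is therefore bounded by $C\nu_{\mu,q}(B_0)\|f\|_{A^p}^q$, which the corrected averages at finitely many lattice points control. Away from $B_0$ we have $|w|\asymp1$, so corrected and uncorrected averages coincide up to constants.

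\emph{Case $p\le q$.} I would write $\int_{Q_j^*}|f|^q\le \sup_{Q_j^*}|f|^{q-p}\int_{Q_j^*}|f|^p$ and use the sub-mean-value bound $\sup_{Q_j^*}|f|^p\le CV_j^{-1}\int_{Q_j^{**}}|f|^p$. Each term is then at most $\Dcal_{\mu,p,q,\rho}(a_j)^q\bigl(\int_{Q_j^{**}}|f|^p\bigr)^{q/p}$. Summing with $\ell^p\subset\ell^q$ and bounded overlap gives $C\le C'\|\Dcal^\circ\|_\infty^q$.

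\emph{Case $q<p$.} I would bound the $j$th term by $\Dcal(a_j)^q\,V_j^{q/p}\sup_{Q_j^*}|f|^q\le \Dcal(a_j)^q\bigl(\int_{Q_j^{**}}|f|^p\bigr)^{q/p}$. Hölder with exponents $r/q$ and $p/q$ then gives $(\sum_j\Dcal(a_j)^r)^{q/r}\|f\|_{A^p}^q$, and the lattice sum is comparable to the $L^r(d\lambda_\Omega)$ norm by the discretization identity and \cref{lem:radius-independent-companion}. A density argument via \cref{lem:radial-approximation} together with Fatou extends the estimate from $\Hol(\overline\Omega)$ to all of $A^p$.

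For necessity when $p\le q$, I would test against the derivative peak functions $F_a$ of \cref{lem:derivative-peaks} for $\delta(a)<\eta$. Since $\delta|RF_a|\ge c\V_\rho(a)^{-1/p}$ on $\Q_{\epsilon\rho}(a)$, we get $\mu(\Q_{\epsilon\rho}(a))\V_\rho(a)^{-q/p}\le C\cdot C_{\rm emb}$, which is $\Dcal_{\mu,p,q,\epsilon\rho}(a)^q\lesssim C$. Passing to the radius $\rho$ uses \cref{lem:radius-independent-companion}, run quantitatively with the covering constant. In the interior region the test functions are $f=\ell$, linear forms with $|R\ell(w)|\ge c|w|$ on the relevant cone sectors; finitely many directions suffice. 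These give $\nu_{\mu,q}(K)\lesssim C$ on any fixed compact $K$, which controls $\Dcal^\circ$ there, since $\V_\rho\asymp1$ on compacts.

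For necessity when $q<p$, I expect the main obstacle: the Luecking-type argument. I would use \cref{lem:projection-atoms} on each of the finitely many separated boundary sublattices and take $f_t=\sum_j c_jr_j(t)G_j$ with Rademacher functions $r_j$. Averaging the embedding in $t$, Khinchine's inequality gives
\[
 \int\Bigl(\sum_j|c_j|^2|RG_j|^2\Bigr)^{q/2}\delta^q\,\dd\mu\le C\bigl(\sum|c_j|^p\bigr)^{q/p}.
\]
The sum dominates $|c_j|^q|RG_j|^q$ on the disjoint sets $\Q_{\epsilon\rho}(a_j)$, so
\[
 \sum_j|c_j|^qV_j^{-q/p}\mu(\Q_{\epsilon\rho}(a_j))\le C\|c\|_{\ell^p}^q .
\]
Duality of $\ell^{p/q}$ with $\ell^{r/q}$ then yields $\sum_j\Dcal_{\mu,p,q,\epsilon\rho}(a_j)^r\le C^{r/q}$. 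Two points need care. First, the cells $\Q_{\epsilon\rho}(a_j)$ must cover enough of each $Q_j$; I would handle this by using a fine lattice at radius $\epsilon\rho$ and then applying \cref{lem:radius-independent-companion}. Second, the interior part is handled by the linear-form argument above. Finitely many sublattices together with the compact core give the $L^r(d\lambda_\Omega)$ bound on $\Dcal^\circ$, which completes the norm comparison $C^{1/q}\asymp\|\Dcal^\circ\|_{L^r(d\lambda_\Omega)}$.
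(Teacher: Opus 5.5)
Your proposal is correct and follows essentially the same route as the paper. It uses lattice discretization via \cref{lem:local-radial-cauchy} for sufficiency, the derivative peaks $F_a$ for necessity when $p\le q$, and projection atoms with Rademacher averaging, Khinchine and $\ell^{p/q}$ duality for necessity when $q<p$; the corrected norm comparison comes from a compact-core split using Cauchy estimates with $Rf(0)=0$ and linear test functions. Your explicit handling of the fact that the atoms' lower bound holds only on $\Q_{\epsilon\rho}(a_j)$ (take the lattice at the finer radius, then apply \cref{lem:radius-independent-companion}) is slightly more careful than the paper, which conflates these cells with $Q_j$; the density/Fatou step is unnecessary, since \cref{lem:local-radial-cauchy} already applies to every $f\in A^p$.
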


\begin{proof}
We first prove sufficiency.  Let $\{a_j\}$ be a McNeal lattice and put
$Q_j=\Q_\rho(a_j)$, $V_j=\V_\rho(a_j)$.  By
\cref{lem:local-radial-cauchy},
\[
 |Rf(z)|^q\delta(z)^q
 \le
 \frac{C}{V_j}\int_{Q_j^*}|f(w)|^q\,\dd v(w),
 \qquad z\in Q_j .
\]
Hence
\begin{equation}\label{eq:derivative-lattice-upper}
\begin{aligned}
 \int_\Omega |Rf|^q\delta^q\,\dd\mu
 &\le
 C\sum_j
 \frac{\mu(Q_j)}{V_j}
 \int_{Q_j^*}|f(w)|^q\,\dd v(w).
\end{aligned}
\end{equation}

Assume first that $p\le q$ and
\[
 M=\sup_j
 \frac{\mu(Q_j)}{V_j^{q/p}}<\infty .
\]
By the sub-mean estimate,
\[
 \int_{Q_j^*}|f|^q\,\dd v
 \le
 C V_j
 \left(
 \frac1{V_j^{**}}\int_{Q_j^{**}}|f|^p\,\dd v
 \right)^{q/p}.
\]
Thus
\[
\begin{aligned}
 \int_\Omega |Rf|^q\delta^q\,\dd\mu
 &\le
 C M
 \sum_j
 \left(
 \int_{Q_j^{**}} |f|^p\,\dd v
 \right)^{q/p}                                      \\
 &\le
 C M
 \left(
 \sum_j\int_{Q_j^{**}} |f|^p\,\dd v
 \right)^{q/p}
 \le
 C M\|f\|_{A^p}^q .
\end{aligned}
\]
This proves the unweighted sufficiency assertion.  The quantitative
comparison with $\Dcal^{\circ}_{\mu,p,q,\rho}$ is established at the
end of the proof by separating a fixed compact core from the boundary
collar.

Now assume $q<p$.  Put
\[
 c_j=\frac{\mu(Q_j)}{V_j^{q/p}} .
\]
By H\"older's inequality on $Q_j^*$,
\[
 \int_{Q_j^*}|f|^q\,\dd v
 \le
 C V_j^{1-q/p}
 \left(
 \int_{Q_j^*}|f|^p\,\dd v
 \right)^{q/p}.
\]
Using \eqref{eq:derivative-lattice-upper}, we get
\[
 \int_\Omega |Rf|^q\delta^q\,\dd\mu
 \le
 C\sum_j c_j
 \left(
 \int_{Q_j^*}|f|^p\,\dd v
 \right)^{q/p}.
\]
Apply H\"older's inequality with exponents
\[
 \frac{p}{p-q}
 \quad\text{and}\quad
 \frac{p}{q}.
\]
Then
\[
\begin{aligned}
 \int_\Omega |Rf|^q\delta^q\,\dd\mu
 &\le
 C
 \left(\sum_j c_j^{p/(p-q)}\right)^{(p-q)/p}
 \left(\sum_j\int_{Q_j^*}|f|^p\,\dd v\right)^{q/p}  \\
 &\le
 C
 \left(\sum_j
 \Dcal_{\mu,p,q,\rho}(a_j)^r\right)^{q/r}
 \|f\|_{A^p}^q .
\end{aligned}
\]
This proves sufficiency for $q<p$.

We prove necessity for $p\le q$.  Let $a\in\Omega$ and assume
$\delta(a)<\eta$, where $\eta$ is from \cref{lem:derivative-peaks}.  Let
$F_a$ be the function from that lemma.  The embedding estimate gives
\[
 \int_\Omega |RF_a(z)|^q\delta(z)^q\,\dd\mu(z)
 \le C .
\]
On $\Q_{\epsilon\rho}(a)$,
\[
 |RF_a(z)|^q\delta(z)^q
 \ge c\,\V_\rho(a)^{-q/p}.
\]
Therefore
\[
 \mu(\Q_{\epsilon\rho}(a))
 \le
 C\V_\rho(a)^{q/p}.
\]
If $\delta(a)\ge\eta$, then $a$ lies in a fixed compact subset of
$\Omega$.  For small $\rho$ the polydiscs $\Q_{\epsilon\rho}(a)$ lie in
another fixed compact subset.  Since $\mu$ is locally finite and
$\V_\rho(a)\ge c_\eta>0$ there,
\[
 \frac{\mu(\Q_{\epsilon\rho}(a))}{\V_\rho(a)^{q/p}}
 \le C_\eta .
\]
By \cref{lem:radius-independent-companion},
\[
 \sup_{z\in\Omega}\Dcal_{\mu,p,q,\rho}(z)<\infty .
\]

We prove necessity for $q<p$.  Color the lattice into finitely many
separated sublattices as in \cref{lem:projection-atoms}.  It is enough
to treat one color.  The finitely many points with
$\delta(a_j)\ge\eta$ give a finite contribution.  For the boundary
sublattice let $G_j$ be the projection atoms from that lemma.  Let
$\{r_j(t)\}$ be the Rademacher functions on $[0,1]$.  For a finitely
supported scalar sequence $\{c_j\}$ put
\[
 F_t(z)=\sum_j c_jr_j(t)G_j(z).
\]
The sets $E_{a_j}$ are disjoint, and $B$ is bounded on $L^p$.
Thus, for each $t$,
\[
 \|F_t\|_{A^p}^p\le C\sum_j|c_j|^p.
\]
In particular,
\[
 \int_0^1\|F_t\|_{A^p}^p\,\dd t
 \le C\sum_j|c_j|^p .
\]
The embedding estimate gives, for each $t$,
\[
 \int_\Omega |RF_t(z)|^q\delta(z)^q\,\dd\mu(z)
 \le
 C\|F_t\|_{A^p}^q .
\]
After integration in $t$ and H\"older's inequality on the probability
space $[0,1]$, this gives
\[
\begin{aligned}
 \int_\Omega
 \int_0^1 |RF_t(z)|^q\,\dd t\,\delta(z)^q\,\dd\mu(z)
 &\le
 C
 \int_0^1 \|F_t\|_{A^p}^q\,\dd t\\
 &\le
 C
 \left(\int_0^1 \|F_t\|_{A^p}^p\,\dd t\right)^{q/p}\\
 &\le
 C\left(\sum_j |c_j|^p\right)^{q/p}.
\end{aligned}
\]
By Khinchine's inequality, as in Luecking's embedding method
\cite{Luecking1993},
\[
 \int_0^1 |RF_t(z)|^q\,\dd t
 \asymp
 \left(\sum_j |c_j|^2 |RG_j(z)|^2\right)^{q/2}.
\]
On $Q_j=\Q_{\epsilon\rho}(a_j)$ the $j$-term gives
\[
 \left(\sum_k |c_k|^2 |RF_k(z)|^2\right)^{q/2}\delta(z)^q
 \ge
 c |c_j|^q V_j^{-q/p}.
\]
Hence
\[
 \sum_j |c_j|^q
 \frac{\mu(Q_j)}{V_j^{q/p}}
 \le
 C\left(\sum_j |c_j|^p\right)^{q/p}.
\]
Set
\[
 b_j=\frac{\mu(Q_j)}{V_j^{q/p}}
 =
 \Dcal_{\mu,p,q,\rho}(a_j)^q .
\]
Put $x_j=|c_j|^q$.  Then
\[
 \sum_j x_j b_j
 \le
 C\left(\sum_j x_j^{p/q}\right)^{q/p}.
\]
Since $p/q>1$, duality for $\ell^{p/q}$ gives
\[
 \{b_j\}\in \ell^{p/(p-q)}.
\]
Thus
\[
 \sum_j
 \Dcal_{\mu,p,q,\rho}(a_j)^r<\infty,
 \qquad r=\frac{pq}{p-q}.
\]
Repeating this for the finitely many colors gives the same sum over the
whole lattice.
By the lattice form of the integral condition,
\[
 \Dcal_{\mu,p,q,\rho}\in L^r(d\lambda_\Omega).
\]
We now prove the norm estimates with corrected averages.
The unweighted bounds above need not be sharp.  Fix
$K=\{z:\delta(z)\ge\eta_0\}$, where $\eta_0>0$ is smaller than
the boundary-peak threshold.  Put
$M_K^q=\int_K |z|^q\delta(z)^q\,\dd\mu(z)$.  Cauchy's estimate and
$Rf(0)=0$ show that
\[
 \left(\int_K|Rf|^q\delta^q\,\dd\mu\right)^{1/q}
 \le C_K M_K\|f\|_{A^p}.
\]
Conversely, test the embedding on $f(z)=z_l$ for each $l$.
Their $A^p$ norms are fixed, and $\sum_l|z_l|^q\asymp|z|^q$.
Thus $M_K\le C_K C^{1/q}$.

Outside $K$, $|z|\ge c_K>0$.  Apply the preceding lattice estimates
to the restriction of $\mu$ to this boundary region.  The corrected
averages control its norm.  Conversely, the boundary tests bound
these averages by $C^{1/q}$.

Finitely many lattice cells meet the transition region.  The
coordinate tests control them on a larger compact set.  By bounded
overlap, the compact contribution to the norm of
$\Dcal^{\circ}_{\mu,p,q,\rho}$ is comparable to $M_K$.  This holds
for both $L^\infty$ and $L^r(d\lambda_\Omega)$.  We obtain the two
norm comparisons.  Only finitely many cells meet each fixed compact
set.  Thus corrected and uncorrected averages have the same
membership conditions.
\end{proof}

\subsection{Companion operators between Bergman spaces}

\begin{theorem}\label{thm:p-le-q}
Let $1<p\le q<\infty$.  Let $g\in\Hol(\Omega)$ and let
$\varphi\in\mathcal H(\Omega)$ satisfy $\varphi(0)=0$.  Assume that
$\mu_{g,\varphi,q}$ is locally finite.  Then
$C_{g,\varphi}:A^p(\Omega)\to A^q(\Omega)$ is bounded if and only if
\[
 \Dcal_{g,\varphi,p,q,\rho}\in L^\infty(\Omega).
\]
Moreover,
\[
 \|C_{g,\varphi}\|_{A^p\to A^q}
 \asymp
 \|\Dcal^{\circ}_{g,\varphi,p,q,\rho}\|_{L^\infty}.
\]
The operator $C_{g,\varphi}:A^p(\Omega)\to A^q(\Omega)$ is compact if
and only if
\[
 \Dcal_{g,\varphi,p,q,\rho}(z)\to0,
 \qquad z\to\partial\Omega .
\]
If $C_{g,\varphi}$ is bounded, then
\[
 \|C_{g,\varphi}\|_{e,A^p\to A^q}
 \asymp
 \limsup_{z\to\partial\Omega}\Dcal_{g,\varphi,p,q,\rho}(z).
\]
\end{theorem}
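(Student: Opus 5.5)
The plan is to reduce everything to \cref{thm:derivative-carleson} through \cref{prop:companion-reduction}. For $f\in\Hol(\overline\Omega)$ that proposition gives
\[
 \|C_{g,\varphi}f\|_{A^q}^q\asymp\int_\Omega|Rf|^q\delta^q\,\dd\mu_{g,\varphi,q}.
\]
The measure $\mu_{g,\varphi,q}$ is locally finite by hypothesis, and $\mu_{g,\varphi,q}(\{0\})=0$ because $\varphi^{-1}(\{0\})$ is omitted. So $C_{g,\varphi}$ is bounded on the dense class $\Hol(\overline\Omega)$ (\cref{lem:radial-approximation}) exactly when the derivative embedding holds for $\mu=\mu_{g,\varphi,q}$. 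The embedding for general $f\in A^p$ follows from the dense case by Fatou's lemma, since $f_r\to f$ locally uniformly with derivatives. With $p\le q$, \cref{thm:derivative-carleson} then gives the boundedness criterion and $\|C_{g,\varphi}\|\asymp C^{1/q}\asymp\|\Dcal^\circ_{g,\varphi,p,q,\rho}\|_{L^\infty}$.

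For compactness and the essential norm, I split $\mu=\mu\mathbf 1_{\{\delta\ge\eta\}}+\mu\mathbf 1_{\{\delta<\eta\}}=:\mu_K+\mu_\eta$.

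\textbf{Upper bound.} Write $\mathbf 1_{\{\delta\ge\eta\}}$ for the compact part. The operator $T_K f=\mathbf 1_{\{\delta\ge\eta\}}\,\delta\,Rf$ maps $A^p$ compactly into $L^q(\mu_K)$. Indeed, a bounded sequence in $A^p$ has a locally uniformly convergent subsequence with convergent derivatives, and $\mu_K$ is finite. To realize this at the operator level, I would use the radial dilations $f\mapsto C_{g,\varphi}(f_r)$ or the operators $T_r f= C_{g,\varphi}f$ restricted via the truncated measure. Cleaner is to note that
\[
 \|C_{g,\varphi}f\|_{A^q}^q\lesssim \int|Rf|^q\delta^q\,\dd\mu_K+\int|Rf|^q\delta^q\,\dd\mu_\eta .
\]
Apply this to $f_m\rightharpoonup0$ (weakly null, hence locally uniformly null by boundedness of point evaluations). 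The first term tends to $0$, and the second is at most $C\sup_{\delta(z)<2\eta}\Dcal_{\mu,p,q,\rho}(z)^q\|f_m\|^q$ by the sufficiency part of \cref{thm:derivative-carleson} applied to $\mu_\eta$. Away from the origin the corrected and uncorrected averages agree. This gives compactness when $\Dcal\to0$.

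For the essential norm upper bound I need an actual compact approximant. I take $K_r f=C_{g,\varphi}(f-f_r)$ complement, i.e.\ write $C_{g,\varphi}=C_{g,\varphi}\circ(I-D_r)+C_{g,\varphi}\circ D_r$ with $D_rf=f_r$. Here $C_{g,\varphi}D_r$ is compact because $D_r$ maps bounded sets of $A^p$ into functions holomorphic near $\overline\Omega$, uniformly bounded with derivatives, so it is compact into $\Hol(\overline\Omega)$-type norms. Then $\|C_{g,\varphi}(I-D_r)\|$ is estimated by the split above: the compact-core part tends to $0$ as $r\to1$, and the boundary part is bounded by $\sup_{\delta<2\eta}\Dcal\cdot\sup_r\|I-D_r\|$. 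The operators $D_r$ are uniformly bounded on $A^p$ by the change of variables in the proof of \cref{lem:radial-approximation}, since $r\ge1/2$ gives Jacobian bounds. Letting $\eta\to0$ gives $\|C_{g,\varphi}\|_e\lesssim\limsup_{z\to\partial\Omega}\Dcal$.

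\textbf{Lower bound.} Take $a_m\to\partial\Omega$ realizing the limsup and the functions $F_{a_m}$ of \cref{lem:derivative-peaks}. They are bounded in $A^p$ and tend to zero locally uniformly, hence weakly, by the unnumbered lemma. So $\|KF_{a_m}\|\to0$ for any compact $K$, and $\|C_{g,\varphi}-K\|\gtrsim\limsup\|C_{g,\varphi}F_{a_m}\|_{A^q}$. By \cref{prop:companion-reduction} and the lower bound on $\Q_{\epsilon\rho}(a_m)$ this is $\gtrsim\limsup\Dcal_{\mu,p,q,\epsilon\rho}(a_m)$. Passing from radius $\epsilon\rho$ to $\rho$ uses \cref{lem:radius-independent-companion}: the covering inequality there gives $\limsup\Dcal_{\rho}\lesssim\limsup\Dcal_{\epsilon\rho}$ along the boundary. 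This yields necessity of $\Dcal\to0$ for compactness and the essential-norm formula.

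The main obstacle is the essential-norm upper bound. I must produce a genuinely compact approximant and control $\|I-D_r\|$ uniformly on $A^p$. Moreover, the boundary-collar estimate of $C_{g,\varphi}(I-D_r)$ has to be taken with the lattice constant from \cref{thm:derivative-carleson}, localized to cells meeting $\{\delta<2\eta\}$, rather than the global supremum.
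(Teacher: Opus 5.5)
Your proposal is correct and follows essentially the paper's route: the reduction through \cref{prop:companion-reduction} and \cref{thm:derivative-carleson}, a compact-core/boundary-collar split for compactness, the radial dilations $f\mapsto f(r\,\cdot)$ as compact approximants for the essential-norm upper bound, and the derivative peaks $F_{a_m}$ for the necessity and lower bounds. Several of your additions supply steps the paper leaves implicit: Fatou's lemma to extend the norm comparison from $\Hol(\overline\Omega)$ to all of $A^p$ (needed because $F_a$ need not lie in $\Hol(\overline\Omega)$), the uniform bound on $\|D_r\|_{A^p\to A^p}$ for $r\ge 1/2$, and the passage from radius $\epsilon\rho$ to $\rho$ via \cref{lem:radius-independent-companion}.
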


\begin{proof}
Put
\[
 \mu=\mu_{g,\varphi,q}.
\]
By \cref{prop:companion-reduction},
\[
\begin{aligned}
 \|C_{g,\varphi}f\|_{A^q}^q
 &\asymp
 \int_\Omega |Rf(w)|^q\delta(w)^q\,\dd\mu(w).
\end{aligned}
\]
This is first proved for $f\in\Hol(\overline\Omega)$.  By
\cref{lem:radial-approximation}, this class is dense in $A^p(\Omega)$.
Hence boundedness and the norm estimate follow from
\cref{thm:derivative-carleson}.  The operator on $A^p(\Omega)$ is the
unique bounded extension of its action on
$\Hol(\overline\Omega)$.

Assume
\[
 \Dcal_{g,\varphi,p,q,\rho}(z)\to0,
 \qquad z\to\partial\Omega .
\]
Let $\varepsilon>0$.  Choose $K\Subset\Omega$ such that the Carleson
constant of $\mu|_{\Omega\setminus K}$ is at most $\varepsilon^q$, that is,
\[
 \|\Dcal_{\mu\mathbf 1_{\Omega\setminus K},p,q,\rho}\|_\infty
 \le C\varepsilon .
\]
Let $\{f_m\}$ be bounded in $A^p$ and let $f_m\to0$ locally uniformly.
Cauchy's estimate on a fixed neighbourhood of $K$ gives
$Rf_m\to0$ uniformly on $K$.  Since $\mu$ is locally finite, we have
\[
\begin{aligned}
 \|C_{g,\varphi}f_m\|_{A^q}^q
 &\asymp
 \int_K |Rf_m|^q\delta^q\,\dd\mu
 +
 \int_{\Omega\setminus K}|Rf_m|^q\delta^q\,\dd\mu\\
 &\le
 o(1)+C\varepsilon^q\sup_m\|f_m\|_{A^p}^q .
\end{aligned}
\]
Let $m\to\infty$ and then let $\varepsilon\to0$.  We get
\[
 \|C_{g,\varphi}f_m\|_{A^q}\to0 .
\]
Now let $\{u_m\}$ be bounded in $A^p$.  By Montel's theorem, a
subsequence converges locally uniformly to some $u\in A^p$.  Applying
the preceding estimate to $u_{m_k}-u$ gives
\[
 \|C_{g,\varphi}u_{m_k}-C_{g,\varphi}u\|_{A^q}\to0 .
\]
Thus $C_{g,\varphi}$ is compact.

Conversely, assume that $C_{g,\varphi}$ is compact.  Let
$a_m\to\partial\Omega$.  Then $F_{a_m}$ is bounded in $A^p$ and
$F_{a_m}\to0$ locally uniformly.  Hence
\[
 \|C_{g,\varphi}F_{a_m}\|_{A^q}\to0 .
\]
By \cref{prop:companion-reduction} and \cref{lem:derivative-peaks},
\[
\begin{aligned}
 \|C_{g,\varphi}F_{a_m}\|_{A^q}^q
 &\gtrsim
 \int_{\Q_{\epsilon\rho}(a_m)}
 |RF_{a_m}(w)|^q\delta(w)^q\,\dd\mu(w)\\
 &\gtrsim
 \V_\rho(a_m)^{-q/p}\mu(\Q_{\epsilon\rho}(a_m)).
\end{aligned}
\]
Therefore
\[
 \Dcal_{g,\varphi,p,q,\rho}(a_m)\to0 .
\]
The radius lemma gives the boundary limit.

For the essential norm, use radial dilations.  Let
\[
 S_r f(z)=f(rz),
 \qquad 0<r<1 .
\]
Then $S_r:A^p\to A^p$ is compact.  Hence
$C_{g,\varphi}S_r$ is compact.  For $\|f\|_{A^p}\le1$,
\[
\begin{aligned}
 \|(C_{g,\varphi}-C_{g,\varphi}S_r)f\|_{A^q}^q
 &\asymp
 \int_\Omega |R(f-S_rf)(w)|^q\delta(w)^q\,\dd\mu(w)\\
&=
 \int_{\{\delta(w)<\tau\}} |R(f-S_rf)(w)|^q\delta(w)^q\,\dd\mu(w)\\
 &\quad+
 \int_{\{\delta(w)\ge \tau\}} |R(f-S_rf)(w)|^q\delta(w)^q\,\dd\mu(w).
\end{aligned}
\]
For fixed $\tau>0$, the second term tends to $0$ as $r\to1^-$.  The first
term is bounded by
\[
 C\|\Dcal_{\mu\mathbf 1_{\{\delta<\tau\}},p,q,\rho}\|_\infty^q .
\]
Therefore
\[
\|C_{g,\varphi}\|_{e,A^p\to A^q}
\le
 C\lim_{\tau\to0^+}
 \|\Dcal_{\mu\mathbf 1_{\{\delta<\tau\}},p,q,\rho}\|_\infty
 =
 C\limsup_{z\to\partial\Omega}
 \Dcal_{g,\varphi,p,q,\rho}(z).
\]
Let $K:A^p\to A^q$ be compact.  For every boundary sequence $a_m$,
\[
 \|KF_{a_m}\|_{A^q}\to0 .
\]
Thus
\[
\begin{aligned}
 \|C_{g,\varphi}-K\|_{A^p\to A^q}^q
 &\ge
 c\limsup_m \|C_{g,\varphi}F_{a_m}\|_{A^q}^q\\
 &\ge
 c\limsup_m
 \Dcal_{g,\varphi,p,q,\rho}(a_m)^q .
\end{aligned}
\]
Take the supremum over all boundary sequences.  This gives the lower
estimate.
\end{proof}

\begin{theorem}\label{thm:q-less-p}
Let $1<q<p<\infty$.  Let $g\in\Hol(\Omega)$ and let
$\varphi\in\mathcal H(\Omega)$ satisfy $\varphi(0)=0$.  Assume that
$\mu_{g,\varphi,q}$ is locally finite.  Put
\[
 r=\frac{pq}{p-q}.
\]
Then the following conditions are equivalent:
\[
\begin{aligned}
& C_{g,\varphi}:A^p(\Omega)\to A^q(\Omega) \text{ is bounded},\\
& C_{g,\varphi}:A^p(\Omega)\to A^q(\Omega) \text{ is compact},\\
& \Dcal_{g,\varphi,p,q,\rho}\in L^r(\Omega,d\lambda_\Omega).
\end{aligned}
\]
Moreover,
\[
 \|C_{g,\varphi}\|_{A^p\to A^q}
 \asymp
 \|\Dcal^{\circ}_{g,\varphi,p,q,\rho}\|_{L^r(d\lambda_\Omega)} .
\]
\end{theorem}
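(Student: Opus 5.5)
The plan is to reduce everything to the derivative Carleson embedding, as in the proof of \cref{thm:p-le-q}, and then add a separate argument showing that boundedness forces compactness. Put $\mu=\mu_{g,\varphi,q}$. By \cref{prop:companion-reduction}, for $f\in\Hol(\overline\Omega)$ we have
\[
 \|C_{g,\varphi}f\|_{A^q}^q\asymp\int_\Omega|Rf|^q\delta^q\,\dd\mu .
\]
By \cref{lem:radial-approximation} this class is dense in $A^p(\Omega)$. So boundedness of $C_{g,\varphi}:A^p\to A^q$ is equivalent to the embedding inequality of \cref{thm:derivative-carleson} in the range $q<p$. That gives (bounded) $\Leftrightarrow$ ($\Dcal_{g,\varphi,p,q,\rho}\in L^r(d\lambda_\Omega)$). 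It also gives the norm comparison with $\|\Dcal^{\circ}_{g,\varphi,p,q,\rho}\|_{L^r(d\lambda_\Omega)}$, since the least embedding constant satisfies $C^{1/q}\asymp\|C_{g,\varphi}\|$. Compact trivially implies bounded, so one implication remains.

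To prove that the $L^r$ condition implies compactness, I will follow the truncation argument from \cref{thm:p-le-q}, replacing the $L^\infty$ tail by an $L^r$ tail. Fix $\varepsilon>0$ and a McNeal lattice $\{a_j\}$. The sufficiency half of \cref{thm:derivative-carleson}, applied to the restricted measure $\mu\mathbf 1_{\Omega\setminus K}$, bounds the embedding constant of that restriction by
\[
 C\Bigl(\sum_{j:\,Q_j\cap(\Omega\setminus K)\ne\emptyset}\Dcal_{\mu,p,q,\rho}(a_j)^r\Bigr)^{q/r}.
\]
Here I use $\mu\mathbf 1_{\Omega\setminus K}(Q_j)\le\mu(Q_j)$, so only cells meeting $\Omega\setminus K$ contribute. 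Because the full lattice sum is finite, I can choose a compact set $K$ (a large sublevel set of $\delta$) that makes this tail sum smaller than $\varepsilon^r$.

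Now let $\{f_m\}$ be bounded in $A^p$ with $f_m\to0$ locally uniformly. Cauchy's estimate gives $Rf_m\to0$ uniformly on $K$. Local finiteness of $\mu$ then makes $\int_K|Rf_m|^q\delta^q\,\dd\mu\to0$, while the tail term is at most $C\varepsilon^q\sup_m\|f_m\|^q_{A^p}$. Hence $\|C_{g,\varphi}f_m\|_{A^q}\to0$. For a general bounded sequence $\{u_m\}$, Montel's theorem gives a locally uniformly convergent subsequence with limit $u\in A^p$; applying the estimate to $u_{m_k}-u$ gives norm convergence of $C_{g,\varphi}u_{m_k}$, so the operator is compact.

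The only point needing care is the tail estimate: the sufficiency proof must localize to the cells carrying the restricted measure, and this works since the bound there is a lattice sum over $\mu(Q_j)$. I do not expect a genuine obstacle. The hard input, namely the necessity of the $L^r$ condition via projection atoms and Khinchine's inequality, is already contained in \cref{thm:derivative-carleson}.
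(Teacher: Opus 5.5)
Your proposal is correct and follows essentially the same route as the paper. You reduce boundedness and the norm comparison to \cref{thm:derivative-carleson} via \cref{prop:companion-reduction} and density, and you obtain compactness from the $L^r$ condition by truncating $\mu$ to a boundary collar $\{\delta<\eta\}$ and applying Montel's theorem. The only difference is cosmetic: the paper makes the tail small by dominated convergence applied to $\|\Dcal_{\mu\mathbf 1_{\{\delta<1/m\}},p,q,\rho}\|_{L^r(d\lambda_\Omega)}$, whereas you use the tail of the convergent lattice sum. Note also that the compact set should be the superlevel set $\{\delta\ge\eta\}$, not a sublevel set.
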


\begin{proof}
Put
\[
 \mu=\mu_{g,\varphi,q}.
\]
By \cref{prop:companion-reduction} the norm identity holds on
$\Hol(\overline\Omega)$.  By \cref{lem:radial-approximation} this
class is dense in $A^p(\Omega)$.  Therefore boundedness and the norm
estimate follow from \cref{thm:derivative-carleson}, and
$C_{g,\varphi}$ denotes the resulting bounded extension.
It remains to prove that boundedness implies compactness.

Assume
\[
 \Dcal_{g,\varphi,p,q,\rho}\in L^r(d\lambda_\Omega).
\]
Let $K_m=\{z:\delta(z)\ge 1/m\}$ and let
\[
 \dd\mu_m=\mathbf 1_{\Omega\setminus K_m}\,\dd\mu .
\]
By the $q<p$ part of \cref{thm:derivative-carleson},
\[
 \sup_{\|f\|_{A^p}\le1}
 \int_{\Omega\setminus K_m}|Rf|^q\delta^q\,\dd\mu
 \le
 C
 \|\Dcal_{\mu_m,p,q,\rho}\|_{L^r(d\lambda_\Omega)}^q .
\]
The right hand side tends to $0$ by dominated convergence.  On each
$K_m$ the convergence $f_j\to0$ locally uniformly implies
$Rf_j\to0$ uniformly.  Also
\[
 \int_{K_m}\delta^q\,\dd\mu<\infty .
\]
Thus
\[
 \int_{K_m}|Rf_j|^q\delta^q\,\dd\mu
 \le
 \sup_{K_m}|Rf_j|^q
 \int_{K_m}\delta^q\,\dd\mu
 \to0 .
\]
Hence, for every bounded sequence $\{f_j\}$ in $A^p$ which converges to
$0$ locally uniformly,
\[
 \|C_{g,\varphi} f_j\|_{A^q}\to0 .
\]
Let $\{u_j\}$ be bounded in $A^p$.  By Montel's theorem, a subsequence
$u_{j_k}$ converges locally uniformly to some $u\in A^p$.  Applying the
last estimate to $u_{j_k}-u$ gives
\[
 \|C_{g,\varphi}u_{j_k}-C_{g,\varphi}u\|_{A^q}\to0 .
\]
This is compactness.
\end{proof}

\begin{unnumberedcorollary}
Let $1<p,q<\infty$ and let $C_g=C_{g,\operatorname{id}}$.
\[
\begin{array}{ll}
\textup{(i)} & p<q:\quad C_g:A^p\to A^q
\text{ is bounded if and only if }g=0,\\[2mm]
\textup{(ii)}& p=q:\quad C_g:A^p\to A^p
\text{ is bounded if and only if }g\in H^\infty(\Omega),\\[2mm]
\textup{(iii)}&p=q:\quad C_g:A^p\to A^p
\text{ is compact if and only if }g=0,\\[2mm]
\textup{(iv)}&q<p:\quad g\in H^\infty(\Omega)
\text{ implies }C_g:A^p\to A^q\text{ is compact.}
\end{array}
\]
\end{unnumberedcorollary}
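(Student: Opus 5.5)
With $\varphi=\operatorname{id}$ the pull-back measure is $d\mu_{g,\operatorname{id},q}=|g|^q\,dv$. It is locally finite for every holomorphic $g$, and $\mu(\{0\})=0$. So \cref{thm:p-le-q} and \cref{thm:q-less-p} apply directly with
\[
 \Dcal_{g,\operatorname{id},p,q,\rho}(z)
 =\V_\rho(z)^{1/q-1/p}\,\Acal_{g,q,\rho}(z),
\]
where $\Acal_{g,q,\rho}$ is the local $L^q$ average of $g$ on $\Q_\rho(z)$. Each of the four items then reduces to comparing local averages of $|g|^q$ with $|g|$ itself. Two facts do this. The sub-mean inequality on polydiscs gives $|g(z)|\le C\Acal_{g,q,\rho}(z)$. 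Conversely, $\Acal_{g,q,\rho}\le\|g\|_\infty$ trivially.

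For (ii), boundedness is equivalent to $\sup_z\Acal_{g,q,\rho}(z)<\infty$, because the exponent $1/q-1/p$ vanishes. By the two facts, this supremum is finite exactly when $g\in H^\infty(\Omega)$. For (iii), compactness is equivalent to $\Acal_{g,p,\rho}(z)\to0$ as $z\to\partial\Omega$. By sub-mean, this gives $|g(z)|\to0$ at the boundary. Since $\Omega$ is bounded, the maximum principle then forces $g\equiv0$. The converse is trivial.

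For (i), with $p<q$ the exponent $1/q-1/p$ is negative and $\V_\rho(z)\to0$ at the boundary. Boundedness gives $\Acal_{g,q,\rho}(z)\le C\V_\rho(z)^{1/p-1/q}\to0$. Sub-mean again gives $|g(z)|\to0$ at $\partial\Omega$, so $g=0$ by the maximum principle.

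For (iv), with $q<p$ and $g\in H^\infty$, \cref{thm:q-less-p} shows it suffices to prove $\Dcal_{g,\operatorname{id},p,q,\rho}\in L^r(d\lambda_\Omega)$ with $r=pq/(p-q)$. Note that $(1/q-1/p)r=1$. Then
\[
 \int_\Omega \Dcal^r\,d\lambda_\Omega
 \le\|g\|_\infty^r\int_\Omega \V_\rho(z)\,\frac{dv(z)}{\V_\rho(z)}
 =\|g\|_\infty^r\,v(\Omega)<\infty .
\]
Compactness then follows from the same theorem.

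The only step needing care is the sub-mean inequality $|g(z)|^q\le C\V_\rho(z)^{-1}\int_{\Q_\rho(z)}|g|^q\,dv$. This is the standard plurisubharmonic mean-value estimate on the polydisc $\Q_\rho(z)$, which is contained in $\Omega$, the same ingredient as the Cauchy estimate used in \cref{lem:local-radial-cauchy}. Beyond that, the main obstacle is just confirming that the statement concerns boundary limits of $|g|$, which the maximum principle handles.
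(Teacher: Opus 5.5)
Your proposal is correct and follows essentially the same route as the paper. You identify $\mu_{g,\operatorname{id},q}=|g|^q\,dv$, apply \cref{thm:p-le-q} and \cref{thm:q-less-p}, compare $\Acal_{g,q,\rho}$ with $|g|$ via the sub-mean inequality and the bound $\Acal_{g,q,\rho}\le\|g\|_\infty$, close (i) and (iii) with the maximum principle, and in (iv) use $(1/q-1/p)r=1$ to get $\Dcal^r\,d\lambda_\Omega\le\|g\|_\infty^r\,dv$, exactly as the paper does (only the trivial direction $g=0\Rightarrow$ bounded in (i) goes unstated).
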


\begin{proof}
If $p<q$, boundedness gives
\[
 \Acal_{g,q,\rho}(z)
 \le
 C\V_\rho(z)^{1/p-1/q}.
\]
The exponent is positive.  Hence
\[
 \Acal_{g,q,\rho}(z)\to0
 \quad z\to\partial\Omega .
\]
The sub-mean property gives
\[
 \sup_{\delta(z)<\varepsilon}|g(z)|\to0 .
\]
The maximum principle on the exhaustion
$\{z:\delta(z)>\varepsilon\}$ gives $g=0$.

If $p=q$, then
\[
 \Dcal_{g,p,p,\rho}=\Acal_{g,p,\rho}.
\]
Boundedness of $\Acal_{g,p,\rho}$ is equivalent to
$g\in H^\infty(\Omega)$ by the sub-mean inequality.  Compactness is
equivalent to boundary vanishing of $\Acal_{g,p,\rho}$, and the previous
maximum principle argument gives $g=0$.

If $q<p$ and $g\in H^\infty$, then
\[
 \Dcal_{g,p,q,\rho}(z)^r
 \le
 C\|g\|_\infty^r\V_\rho(z),
 \qquad r=\frac{pq}{p-q}.
\]
Therefore
\[
 \int_\Omega \Dcal_{g,p,q,\rho}(z)^r\,\dd\lambda_\Omega(z)
 \le
 C\|g\|_\infty^r\int_\Omega \dd v(z)<\infty .
\]
The compactness follows from \cref{thm:q-less-p}.
\end{proof}

\begin{corollary}\label{cor:disc-model}
Let $\Omega=\mathbb D$ and let $C_g=C_{g,\operatorname{id}}$.  Then
\[
 C_g f(z)=I_gf(z)=\int_0^z f'(\zeta)g(\zeta)\,\dd\zeta .
\]
Let $D(a,\rho)$ be a fixed Bergman disc.  Then
\[
 \V_\rho(a)\asymp (1-|a|^2)^2
\]
and
\[
 |g(a)|\lesssim\Acal_{g,q,\rho}(a).
\]
Hence
\[
 \Dcal_{g,p,q,\rho}(a)
 \asymp
 (1-|a|^2)^{2(1/q-1/p)}
 \left(\avg_{D(a,\rho)}|g(w)|^q\,\dd A(w)\right)^{1/q}.
\]
If $q<p$ and $r=pq/(p-q)$, then
\[
 C_g:A^p(\mathbb D)\to A^q(\mathbb D)
 \text{ is bounded}
\]
if and only if
\[
 g\in A^r(\mathbb D).
\]
The same condition is equivalent to compactness.
\end{corollary}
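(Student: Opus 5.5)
The plan is to specialize the general machinery to $\Omega=\mathbb D$, $\varphi=\operatorname{id}$, and then recognise the $L^r(d\lambda_\Omega)$ condition of \cref{thm:q-less-p} as membership of $g$ in $A^r$. The first assertions are direct computations.

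\emph{Step 1 (the disc model).} For $n=1$ we have $Rf(z)=zf'(z)$. Hence
\[
 C_gf(z)=\int_0^1 tz\,f'(tz)g(tz)\,\frac{\dd t}{t}=\int_0^z f'(\zeta)g(\zeta)\,\dd\zeta=I_gf(z).
\]

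\emph{Step 2 (geometry and pointwise control).} On the disc, McNeal polydiscs are comparable to Bergman discs $D(a,\rho)$, whose Euclidean radius is $\asymp 1-|a|^2$. This gives $\V_\rho(a)\asymp(1-|a|^2)^2$. The sub-mean value property of $|g|^q$ on $D(a,\rho)$ gives $|g(a)|\lesssim\Acal_{g,q,\rho}(a)$. For $\varphi=\operatorname{id}$ the measure is $\dd\mu_{g,\operatorname{id},q}=|g|^q\,\dd v$; the set $\{0\}$ is null. Therefore $\Dcal_{g,\operatorname{id},p,q,\rho}=\Dcal_{g,p,q,\rho}$, and substituting the volume gives the displayed formula. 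By \cref{lem:radius-independent-companion}, the choice of the Bergman-disc radius is irrelevant.

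\emph{Step 3 (the $q<p$ criterion).} By \cref{thm:q-less-p}, boundedness and compactness of $C_g:A^p\to A^q$ are both equivalent to
\[
 \int_{\mathbb D}\Dcal_{g,p,q,\rho}(a)^r\,\frac{\dd A(a)}{(1-|a|^2)^2}<\infty .
\]
Since $r(1/q-1/p)=1$, the weight $(1-|a|^2)^{2r(1/q-1/p)}$ exactly cancels the $d\lambda$ density. The condition therefore reads
\[
 \int_{\mathbb D}\Big(\avg_{D(a,\rho)}|g|^q\,\dd A\Big)^{r/q}\dd A(a)<\infty .
\]

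\emph{Step 4 (identifying the condition with $g\in A^r$).} If $g\in A^r$, then $r>q$ and Jensen/Hölder give $\big(\avg_{D(a,\rho)}|g|^q\big)^{r/q}\le\avg_{D(a,\rho)}|g|^r$. Fubini then applies, using that $w\in D(a,\rho)$ if and only if $a\in D(w,\rho)$, together with $|D(a,\rho)|\asymp|D(w,\rho)|$. This bounds the integral by $C\|g\|_{A^r}^r$. Conversely, the pointwise estimate $|g(a)|^r\lesssim\Acal_{g,q,\rho}(a)^r$ from Step 2, integrated in $a$, gives $\|g\|_{A^r}^r\lesssim\int\Acal_{g,q,\rho}^r\,\dd A<\infty$.

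The only step requiring care is the Fubini exchange in Step 4, which depends on the symmetry and comparability of Bergman discs. This is standard, so I expect no real obstacle; the rest is bookkeeping of exponents.
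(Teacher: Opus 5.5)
Your proposal is correct and follows essentially the same route as the paper. Both arguments use the substitution $\zeta=tz$, the volume comparison and sub-mean estimate, and the exponent identity $2r(1/q-1/p)=2$, which reduces the $L^r(\dd\lambda)$ condition of \cref{thm:q-less-p} to $\int_{\mathbb D}\Acal_{g,q,\rho}^r\,\dd A<\infty$; that condition is then matched with $g\in A^r(\mathbb D)$ by the sub-mean inequality in one direction and by Jensen's inequality plus bounded overlap in the other, where your Fubini argument with symmetric Bergman discs is just the integral form of bounded overlap.
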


\begin{proof}
The identity follows from $\zeta=tz$:
\[
 C_g f(z)
 =
 \int_0^1 tz f'(tz)g(tz)\,\frac{\dd t}{t}
 =
 \int_0^z f'(\zeta)g(\zeta)\,\dd\zeta .
\]
For Bergman discs,
\[
 v(D(a,\rho))\asymp (1-|a|^2)^2 .
\]
Here $dA$ denotes planar area measure.  Also
\[
 \dd\lambda(a)\asymp \frac{\dd A(a)}{(1-|a|^2)^2}.
\]
The sub-mean estimate gives $|g(a)|\lesssim\Acal_{g,q,\rho}(a)$.
There can be no reverse pointwise comparison at a zero of $g$.
The stated formula for $\Dcal_{g,p,q,\rho}$ follows from its
definition.

If $q<p$, then
\[
\begin{aligned}
 \int_{\mathbb D}
 \Dcal_{g,p,q,\rho}(a)^r\,\dd\lambda(a)
 &\asymp
 \int_{\mathbb D}\Acal_{g,q,\rho}(a)^r\,\dd A(a)\\
 &\asymp\int_{\mathbb D}|g(a)|^r\,\dd A(a),
\end{aligned}
\]
because
\[
 2r(1/q-1/p)=2.
\]
For the last comparison use the sub-mean estimate in one direction.
In the other, $r>q$ and Jensen's inequality bound the local $L^q$
average raised to $r/q$ by the local $L^r$ average; integration and
bounded overlap finish the argument.
The conclusion follows from \cref{thm:q-less-p}.
\end{proof}

\subsection{Examples and sharpness}\label{subsec:examples-sharpness}

The pull-back map is essential.  Let $\Omega=\mathbb D$,
\[
 \varphi_a(z)=az,\qquad 0<|a|<1,
\]
and let $g=1$.  Then $\mu_{1,\varphi_a,q}$ is supported in
\[
 \{w: |w|\le |a|\}.
\]
Hence
\[
 \Dcal_{1,\varphi_a,p,q,\rho}(z)\to0,
 \qquad z\to\partial\mathbb D .
\]
Therefore
\[
 C_{1,\varphi_a}:A^p(\mathbb D)\to A^q(\mathbb D)
\]
is compact for all $1<p,q<\infty$.  This does not happen for
$\varphi=\operatorname{id}$ when $p<q$, unless $g=0$.

The same example also separates the Schatten theory from the diagonal
case.  For $q=2$ the measure $\mu_{1,\varphi_a,2}$ has compact support
in $\mathbb D$.  Hence
\[
 \Acal_{1,\varphi_a,2,\rho}
 \in L^s(\mathbb D,d\lambda)
 \qquad 0<s<\infty .
\]
By \cref{cor:strict-self-map},
\[
 C_{1,\varphi_a}\in \Schatten_s(A^2(\mathbb D)),
 \qquad 0<s<\infty .
\]
By contrast, \cref{cor:no-schatten} shows that
$C_{1,\operatorname{id}}$ belongs to no Schatten class.  Thus the
composition part is essential to the result.

The same shrinking maps expose why the weight at the origin is needed
in quantitative estimates.  If $a=\varepsilon>0$ and $g=1$, then
$C_{1,\varphi_\varepsilon}f=f(\varepsilon z)-f(0)$.  With normalized
monomials in $A^2(\mathbb D)$,
\[
 \|C_{1,\varphi_\varepsilon}\|=\varepsilon,
 \qquad
 \|C_{1,\varphi_\varepsilon}\|_{\Schatten_s}^s
 =\frac{\varepsilon^s}{1-\varepsilon^s}\quad(s>0).
\]
For a fixed polydisc around the origin the unweighted pull-back
average, and its $L^{s/2}$ integral, stay bounded below as
$\varepsilon\downarrow0$.  The averages formed from
$|w|^q\mu_{1,\varphi_\varepsilon,q}$ instead have the correct decay.

\begin{unnumberedexample}[A finite-type model in complex dimension $n$]
Let $n,m\ge2$ be integers, write $(z,w)\in\mathbb C^{n-1}\times
\mathbb C$, and set
\[
 \Omega_{m,n}=\{(z,w): |z|^2+|w|^{2m}<1\}.
\]
This is a smooth convex domain of type $2m$ at $(a,0)$ when
$|a|=1$.  Near this locus, the distinguished radii have orders
$\delta$ in the complex normal direction, $\delta^{1/2}$ in the
$n-2$ remaining $z$ directions, and $\delta^{1/(2m)}$ in the $w$
direction.  Thus $\V_\rho\asymp\delta^{n+1/m}$.  Put
$\varphi(z,w)=(z,0)$ and $g(z,w)=w^k$ for an integer $k\ge0$.
The pull-back measure is carried by $\{w=0\}$.  For a polydisc
centered at $(a,0)$ with $|a|\to1$,
\[
 \mu_{g,\varphi,q}(\Q_\rho((a,0)))
 \asymp (1-|a|)^{n+1/m+kq/(2m)},\qquad
 \Acal_{g,\varphi,q,\rho}((a,0))
 \asymp(1-|a|)^{k/(2m)}.
\]
To obtain the measure estimate, integrate $|w|^{kq}$ over the
fibers $|w|^{2m}<1-|z|^2$.  After rescaling, the depth ratio in
the pull-back measure has bounded average.  The same estimate holds
for polydiscs meeting $\{w=0\}$.  Hence, when $p=q$,
$C_{g,\varphi}$ is bounded for every $k$ and compact exactly when
$k>0$.

The Schatten threshold can be computed for every $s>0$ without a
Toeplitz trace theorem.  For a multi-index
$\beta\in\mathbb N_0^{n-1}$, write $|\beta|=\sum_j\beta_j$ and
$\beta!=\prod_j\beta_j!$.  If $N=|\beta|\ge1$, orthogonality of
monomials gives
\[
 C_{w^k,\varphi}(z^\beta)=\frac{N}{N+k}z^\beta w^k,
 \qquad C_{w^k,\varphi}(z^\beta w^b)=0\quad (b\ge1).
\]
Integrating first in $w$ and then in the unit ball of $\mathbb C^{n-1}$
gives
\[
 \|z^\beta w^k\|_{A^2(\Omega_{m,n})}^2
 =\frac{\pi^n\beta!}{k+1}
 \frac{\Gamma(1+(k+1)/m)}{\Gamma(N+n+(k+1)/m)}.
\]
It follows that the nonzero singular values, indexed by $\beta$, are
\[
 s_\beta(C_{w^k,\varphi})
 =\frac{N}{N+k}\left[
 \frac{\Gamma(1+(k+1)/m)\Gamma(N+n+1/m)}
 {(k+1)\Gamma(1+1/m)\Gamma(N+n+(k+1)/m)}
 \right]^{1/2}
 \asymp N^{-k/(2m)}.
\]
There are $\binom{N+n-2}{n-2}\asymp N^{n-2}$ multi-indices of
length $N$.  Consequently, the ordered singular values have order
$j^{-k/[2m(n-1)]}$.  Hence
\[
 \sum_\beta s_\beta(C_{w^k,\varphi})^s
 \asymp\sum_{N\ge1}N^{n-2-sk/(2m)}<\infty
 \quad\Longleftrightarrow\quad sk>2m(n-1).
\]
In particular, when $n=2$ the threshold is $sk>2m$.  The type
controls the decay of each singular value.  The dimension controls
their multiplicity.  This calculation also covers $s<2$, without
asserting the general derivative Toeplitz criterion in that range.
\end{unnumberedexample}

The exponent in the case $q<p$ is sharp.  Let
\[
 r=\frac{pq}{p-q}
\]
and put
\[
 g_\alpha(z)=(1-z)^{-\alpha},\qquad \alpha>0 .
\]
Then
\[
 g_\alpha\in A^r(\mathbb D)
 \quad\text{if and only if}\quad
 \alpha r<2 .
\]
By \cref{cor:disc-model},
\[
 C_{g_\alpha}:A^p(\mathbb D)\to A^q(\mathbb D)
\]
is bounded, equivalently compact, if and only if
\[
 \alpha<\frac2r .
\]
Thus unbounded symbols are allowed when $q<p$.  The condition
\[
 \Dcal_{g,p,q,\rho}\in L^r(d\lambda)
\]
cannot be replaced by $g\in H^\infty$.

\clearpage
\section{Schatten Classes and Compact-Range Singular Values}

The derivative Toeplitz form separates the trace-ideal calculation
from the radial primitive.  We use it below for companion operators.

\subsection{Derivative Toeplitz forms}

In this section $C_{g,\varphi}$ acts on $A^2(\Omega)$.  For a positive Borel
measure $\mu$ define the derivative Toeplitz form
\[
 \mathfrak s_\mu(f,h)
 =
 \int_\Omega Rf(z)\overline{Rh(z)}\delta(z)^2\,\dd\mu(z).
\]
When the form is bounded on $A^2(\Omega)$, let $S_\mu$ be the positive
operator such that
\[
 \langle S_\mu f,h\rangle_{A^2}
 =
 \mathfrak s_\mu(f,h).
\]

We use one Schatten parameter for $C_{g,\varphi}$ and another one
for $S_\mu$.  If
\[
 C_{g,\varphi}\in\Schatten_s,
\]
then the positive comparison is made for
\[
 C_{g,\varphi}^*C_{g,\varphi}
 \in\Schatten_{s/2}.
\]
The argument below uses $s/2\ge1$.  It therefore gives the range
$s\ge2$ for $C_{g,\varphi}$.

\begin{theorem}\label{thm:derivative-toeplitz}
Let $\mu$ be a positive locally finite Borel measure on $\Omega$ with
$\mu(\{0\})=0$ and let $1\le t<\infty$.  Write
$\dd\nu(w)=|w|^2\,\dd\mu(w)$.  Then
\[
 S_\mu\in\Schatten_t(A^2(\Omega))
 \quad\Longleftrightarrow\quad
 \widehat\mu_\rho(z)=\frac{\mu(\Q_\rho(z))}{\V_\rho(z)}
 \in L^t(\Omega,d\lambda_\Omega).
\]
The quantitative estimate is
\begin{equation}\label{eq:corrected-derivative-trace}
 \|S_\mu\|_{\Schatten_t}^t\asymp
 \int_\Omega
 \left(\frac{\nu(\Q_\rho(z))}{\V_\rho(z)}\right)^t
 \dd\lambda_\Omega(z).
\end{equation}
The constants in this comparison are independent of $\mu$.
\end{theorem}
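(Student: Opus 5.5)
We would argue along the lines of the classical Luecking/Zhu trace method for positive Toeplitz operators, adapted to the derivative form $\mathfrak s_\mu$. Split $\mu=\mu\mathbf 1_K+\mu\mathbf 1_{\Omega\setminus K}$ with $K=\{\delta\ge\eta_0\}$ as in the proof of \cref{thm:derivative-carleson}. Then $S_\mu=S_{\mu_K}+S_{\mu_\partial}$, and both pieces are positive. For positive operators with $t\ge1$ we have $\|A+B\|_{\Schatten_t}^t\asymp\|A\|_{\Schatten_t}^t+\|B\|_{\Schatten_t}^t$, using $A,B\le A+B$ and the triangle inequality. So it suffices to treat the compact core and the boundary collar separately.

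\emph{Compact core.} Cauchy estimates on a neighbourhood of $K$ together with $Rf(0)=0$ give $|Rf(z)|\le C|z|\,\|f\|_{A^2}$ on $K$. Hence $Rf(z)\delta(z)$ is evaluation against a vector $e_z$ with $\|e_z\|\le C|z|$. Then $S_{\mu_K}=\int_K e_z\otimes e_z\,\delta^2\dd\mu$, and its trace norm is at most $C\nu(K)$. Conversely, testing on the coordinates $z_l$ gives $\Tr S_{\mu_K}\ge\|S_{\mu_K}\|\ge c\,\nu(K)$. On $K$, $S_{\mu_K}$ is trace class, so its $\Schatten_t$ norm lies between its operator norm and its trace norm. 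Both are comparable to $\nu(K)$, and by bounded overlap so is the core part of the right side of \eqref{eq:corrected-derivative-trace}.

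\emph{Boundary collar, upper bound.} Here $|w|\asymp1$, so $\nu$ and $\mu$ are comparable. Take a McNeal lattice and write $S_{\mu_\partial}=\sum_j T_j$ with $T_j=S_{\mu\mathbf 1_{Q_j}}$ for a disjointification of the $Q_j$. For $t=1$, the trace is $\int\|R_z K(\cdot,w)\|^2\delta^2\dd\mu$, with the derivative applied in the first variable. By \cref{lem:local-radial-cauchy} applied to $K(\cdot,w)$ this is bounded by $C\sum_j\mu(Q_j)/V_j$. For $t>1$ we would interpolate between $t=1$ and $t=\infty$; the latter is the $L^\infty$ case of \cref{thm:derivative-carleson} with $p=q=2$. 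Concretely, use the map $\mu\mapsto S_\mu$, viewed on sequences $\{\mu(Q_j)/V_j\}$, which is bounded $\ell^1\to\Schatten_1$ and $\ell^\infty\to\mathcal B$. Alternatively, use the inequality $\Tr(S^t)\le\sum_j\langle S\,\tilde e_j,\tilde e_j\rangle^{t}$-type bound via the Berezin-type transform $\widetilde S(a)=\langle S F_a,F_a\rangle$ with the derivative peaks, together with $\widetilde S(a)\lesssim\sum_k\widehat\mu(a_k)\,\omega(a,a_k)$ and Schur-type decay of the test functions off-diagonal.

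\emph{Boundary collar, lower bound.} This is the main obstacle. We would use the separated projection atoms $G_j$ of \cref{lem:projection-atoms} with $p=2$. The operator $A:e_j\mapsto G_j$ is bounded from $\ell^2$ into $A^2$. For $t\ge1$,
\[
 \|A^*S_{\mu_\partial}A\|_{\Schatten_t}^t\ge\sum_j\langle S_{\mu_\partial}G_j,G_j\rangle^t ,
\]
since the diagonal of a positive $\Schatten_t$ operator is dominated in $\ell^t$. Each diagonal entry satisfies $\langle S_\mu G_j,G_j\rangle\ge\int_{\Q_{\epsilon\rho}(a_j)}|RG_j|^2\delta^2\dd\mu\ge c\,\mu(\Q_{\epsilon\rho}(a_j))/V_j$. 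Summing over the finitely many colours and applying \cref{lem:radius-independent-companion} yields $\int\widehat\mu_\rho^t\dd\lambda_\Omega\lesssim\|S_\mu\|_{\Schatten_t}^t$. The delicate point is the uniform boundedness of $A$, which \cref{lem:projection-atoms} provides. The argument needs $t\ge1$, which is why the theorem is restricted to that range. Finally, the constants are independent of $\mu$ throughout, and the membership equivalence follows because the corrected and uncorrected averages differ only on finitely many cells.
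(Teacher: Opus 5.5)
Your proposal is correct, and its architecture is the paper's. It uses the same core/collar split with $\|A+B\|_{\Schatten_t}^t\asymp\|A\|_{\Schatten_t}^t+\|B\|_{\Schatten_t}^t$. It uses the same coordinate-function test and trace bound on the compact core. It obtains the collar upper bound the same way, by interpolating the trace estimate ($\ell^1\to\Schatten_1$) against the $p=q=2$ case of \cref{thm:derivative-carleson} ($\ell^\infty\to\mathcal B(A^2)$).

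The one real difference is the test family in the collar lower bound.

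\emph{The paper's choice.} It uses the normalized derivative reproducing vectors $V_j^{1/2}H_{a_j}$. This needs three ingredients:
a Bessel estimate from \cref{lem:local-radial-cauchy};
the diagonal bound $h(a_j)\gtrsim V_j^{-1}$ from the derivative peaks;
and a Cauchy oscillation estimate.
In return it gives $\delta\,|R(V_j^{1/2}H_{a_j})|\gtrsim V_j^{-1/2}$ on the whole covering cell $Q_j$, and it never invokes Xiong's theorem.

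\emph{Your choice.} You recycle the atoms of \cref{lem:projection-atoms}. At $p=2$ the boundedness of $e_j\mapsto G_j$ is automatic: it is an orthogonal projection applied to disjointly supported, $L^2$-normalized functions. So the delicate input is not boundedness, as you suggest. It is the derivative lower bound of that lemma, which rests on the log-convexity of $K(z,z)$. That bound holds only on $\Q_{\epsilon\rho}(a_j)$. You must therefore take the lattice fine enough that these small polydiscs cover the collar before passing to $\widehat\mu_\rho$ via \cref{lem:radius-independent-companion}.

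Your route saves the separate analysis of $H_{a_j}$; the paper's avoids the change of lattice.

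Two small corrections.
\begin{enumerate}
\item Discard the Berezin-type alternative for the upper bound. For $t\ge1$ the diagonal inequality runs the other way, $\sum_j\langle Se_j,e_j\rangle^t\le\Tr(S^t)$, so it only yields lower bounds.
\item The representer of $f\mapsto Rf(w)$ is $\overline{R_w}K(\cdot,w)$, differentiated in the second variable, not the first. Its squared norm is $R_w\overline{R_w}K(w,w)$.
\end{enumerate}
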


\begin{proof}
Let $H_z$ represent the functional $f\mapsto\delta(z)Rf(z)$.
Put
\[
 h(z)=\|H_z\|^2
 =\delta(z)^2R_z\overline{R_z}K(z,z).
\]
On a fixed compact set $K_0\Subset\Omega$, Cauchy's estimates give
an upper bound for $h$.  The coordinate functions give a lower
bound.  Together they yield
\begin{equation}\label{eq:interior-derivative-kernel}
 h(z)\asymp_{K_0}|z|^2,\qquad z\in K_0.
\end{equation}
Choose a thin boundary collar $B$.  The local derivative Cauchy
estimate gives $h(z)\lesssim\V_\rho(z)^{-1}$ there.
For the reverse bound, evaluate $\delta(z)RF_z(z)$ using the peak
from \cref{lem:derivative-peaks}.  The reproducing norm gives
\begin{equation}\label{eq:boundary-derivative-kernel}
 h(z)\asymp\V_\rho(z)^{-1},\qquad z\in B.
\end{equation}
This estimate holds on $B$.  It fails at $z=0$.

First restrict $\mu$ to $B$ and write $\mu_B=\mu|_B$.
Take a sufficiently fine McNeal lattice $\{a_j\}$ in $B$.
Choose a Borel partition $B=\bigsqcup_j E_j$ with
$E_j\subset Q_j=\Q_\rho(a_j)$.  The polydiscs $Q_j$ have
bounded overlap.
Put $d_j=\mu_B(E_j)/V_j$ and $S_j=S_{\mu_B|_{E_j}}$.
Then $S_{\mu_B}=\sum_jS_j$ as a monotone sum.
The rank-one integral formula and
\eqref{eq:boundary-derivative-kernel} give
\[
 \|S_j\|_{\Schatten_1}=\Tr S_j
 =\int_{E_j}h(z)\,\dd\mu(z)\lesssim d_j.
\]
For any finitely supported scalars $c_j$, the derivative Carleson
upper bound in \cref{thm:derivative-carleson} (with $p=q=2$) gives
\[
 \left\|\sum_j c_j\frac{S_j}{d_j}\right\|_{\mathcal B(A^2)}
 \lesssim\|c\|_{\ell^\infty};
\]
Omit terms with $d_j=0$.  The measure
$\sum_j |c_j|\mu_B|_{E_j}/d_j$ has bounded local averages.
This follows from volume comparability and bounded overlap.
The trace estimate bounds the same map from $\ell^1$ to
$\Schatten_1$.  For a finite set $F$, write

\[
 T_F(c)=\sum_{j\in F,\,d_j>0}c_j\frac{S_j}{d_j}.
\]
Both bounds are independent of $F$.  Interpolating
$T_F:\ell^\infty\to\mathcal B(A^2)$ and
$T_F:\ell^1\to\Schatten_1$, and then taking $c_j=d_j$, gives
$\|\sum_{j\in F}S_j\|_{\Schatten_t}
\le C_t(\sum_{j\in F}d_j^t)^{1/t}$.  Thus, for $1<t<\infty$,
\begin{equation}\label{eq:trace-upper-interpolation}
 \|S_{\mu_B}\|_{\Schatten_t}^t\lesssim\sum_jd_j^t.
\end{equation}
The case $t=1$ is the trace estimate itself.  Monotone limits remove
all finite-support restrictions.

For the reverse estimate, use the polydiscs $Q_j$.
Put $u_j=V_j^{1/2}H_{a_j}$.  The local derivative Cauchy estimate
and bounded overlap show that $\{u_j\}$ is a Bessel family.
For $f\in A^2$,
\[
 \sum_j|\langle f,u_j\rangle|^2
 =\sum_jV_j\delta(a_j)^2|Rf(a_j)|^2
 \lesssim\|f\|_{A^2}^2.
\]
At $a_j$, \eqref{eq:boundary-derivative-kernel} gives
$|\delta(a_j)Ru_j(a_j)|\gtrsim V_j^{-1/2}$.
First fix a reference radius $\rho_0$ larger than $\rho$.
At this radius, the diagonal and evaluation bounds give
\[
 h(a_j)\asymp\V_{\rho_0}(a_j)^{-1},
 \qquad
 \sup_{z\in\Q_{\rho_0/2}(a_j)}
 \delta(a_j)|RH_{a_j}(z)|
 \le C\V_{\rho_0}(a_j)^{-1}.
\]
Apply Cauchy's estimate to $RH_{a_j}$ on
$\Q_{\rho_0/2}(a_j)$.  For some $\gamma>0$,
\[
 \sup_{z\in\Q_\rho(a_j)}
 \delta(a_j)|RH_{a_j}(z)-RH_{a_j}(a_j)|
 \le C\left(\frac{\rho}{\rho_0}\right)^\gamma h(a_j).
\]
Choose $\rho$ so that the right side is at most $h(a_j)/2$.
Since $\delta(a_j)RH_{a_j}(a_j)=h(a_j)$, it follows that
$\delta(a_j)|RH_{a_j}(z)|\ge h(a_j)/2$ on $Q_j$.
Now use $\delta(z)\asymp\delta(a_j)$ there.  Multiplying by
$V_j^{1/2}$ gives
\[
 \inf_{z\in Q_j}\delta(z)|Ru_j(z)|
 \ge c V_j^{1/2}h(a_j)
 \ge c_\rho V_j^{-1/2}.
\]
The constants are uniform in the boundary centers.  Consequently
\[
 \langle S_{\mu_B}u_j,u_j\rangle
 \gtrsim\frac{\mu_B(Q_j)}{V_j}.
\]
Let $T:\ell^2\to A^2$ satisfy $Te_j=u_j$.  It is bounded by the
Bessel estimate.  For $t\ge1$, apply Jensen's inequality to
$T^*S_{\mu_B}T$.  The ideal property then gives
\[
 \sum_j\langle S_{\mu_B}u_j,u_j\rangle^t
 \le\Tr(T^*S_{\mu_B}T)^t
 \le\|T\|^{2t}\Tr(S_{\mu_B}^t).
\]
This proves the converse to \eqref{eq:trace-upper-interpolation},
with $d_j$ replaced by $\mu_B(Q_j)/V_j$.  Bounded overlap gives the
same $\ell^t$ norm for cell and polydisc masses.

The complementary measure $\mu_0=\mu|_{\Omega\setminus B}$ is
supported on a compact core.  Equation
\eqref{eq:interior-derivative-kernel} yields
\[
 \Tr S_{\mu_0}\asymp
 M_0:=\int_{\Omega\setminus B}|z|^2\,\dd\mu(z).
\]
Conversely, testing on the fixed coordinate functions gives
$\|S_{\mu_0}\|\gtrsim M_0$.  Therefore, for every $t\ge1$,
$\|S_{\mu_0}\|_{\Schatten_t}^t\asymp M_0^t$.  Positivity, the triangle
inequality, and monotonicity of eigenvalues show
\[
 \|S_\mu\|_{\Schatten_t}^t\asymp
 \|S_{\mu_0}\|_{\Schatten_t}^t+
 \|S_{\mu_B}\|_{\Schatten_t}^t.
\]
Enlarge the compact set to include the fixed transition band.
The lattice-integral comparison identifies the preceding sum with
the right side of \eqref{eq:corrected-derivative-trace}.
On $B$, $\nu\asymp\mu$.  On the compact set, both measures are
locally finite.  Thus the weighted and unweighted averages have
the same $L^t$ membership condition.
\end{proof}

The upper estimate has a useful extension to exponents below one.
It does not use the diagonal lower estimate in the preceding proof.

\begin{proposition}\label{prop:small-derivative-sufficient}
Let $\mu$ be a positive locally finite Borel measure on $\Omega$ with
$\mu(\{0\})=0$, and let $0<t<1$.  Put
$\dd\nu(w)=|w|^2\,\dd\mu(w)$.  If
\[
 I_t(\mu):=\int_\Omega
 \left(\frac{\nu(\Q_\rho(z))}{\V_\rho(z)}\right)^t
 \dd\lambda_\Omega(z)<\infty,
\]
then $S_\mu\in\Schatten_t(A^2(\Omega))$ and
\[
 \|S_\mu\|_{\Schatten_t}^t\le C_t I_t(\mu).
\]
The converse is not asserted here.
\end{proposition}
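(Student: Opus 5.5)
We split $\mu$ as in the proof of \cref{thm:derivative-toeplitz}, writing $\mu=\mu_0+\mu_B$ with $\mu_B=\mu|_B$ on a thin boundary collar $B$ and $\mu_0$ supported in a compact core. For $0<t<1$ the quasi-norm satisfies the $t$-triangle inequality $\|X+Y\|_{\Schatten_t}^t\le\|X\|_{\Schatten_t}^t+\|Y\|_{\Schatten_t}^t$ (Rotfel'd/McCarthy). It therefore suffices to bound each piece separately, and in fact each piece of a countable decomposition. We will show
\[
\|S_{\mu_B|_{E_j}}\|_{\Schatten_t}^t\le C_t\,d_j^t,\qquad d_j=\frac{\nu(E_j)}{V_j},
\]
for a Borel partition $B=\bigsqcup_jE_j$ with $E_j\subset Q_j$. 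Summing over $j$ and using the lattice--integral comparison then gives $\|S_{\mu_B}\|_{\Schatten_t}^t\lesssim I_t(\mu)$. Monotone limits pass from finite partial sums to the full sum.

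The key step is the single-cell estimate. Its difficulty is that $S_j:=S_{\mu|_{E_j}}$ is not of finite rank, so the trace bound $\Tr S_j\lesssim d_j$ only controls $\Schatten_1$. We handle this by local polynomial approximation. On the enlarged polydisc $Q_j^{*}$ we expand $f$ in the distinguished coordinates at $a_j$ rescaled by the McNeal radii $\tau_\ell(a_j)$. Let $P_N f$ be the Taylor polynomial of degree $<N$ of $f$ at $a_j$ in these rescaled variables. Cauchy estimates on $\Q_{C\rho}(a_j)$ then give, for $z\in Q_j$ and $\rho$ small relative to the enlargement,
\[
\delta(z)\,|R(f-P_Nf)(z)|\le C\,\theta^N\,V_j^{-1/2}\|f\|_{L^2(Q_j^{*})},
\]
with some fixed $\theta<1$; this uses the argument of \cref{lem:local-radial-cauchy}. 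The map $f\mapsto P_Nf$ restricted to $E_j$ has rank at most $\binom{N+2n}{2n}\lesssim N^{2n}$ (polynomials in $z,\bar z$ are not needed; holomorphic polynomials suffice, so the rank is $\lesssim N^n$). Write $S_j=A_j^{*}A_j$, where $A_j f=\delta Rf|_{E_j}\in L^2(\mu|_{E_j})$. Then
\[
s_k(A_j)\le \|A_j(I-\Pi_N)\|\lesssim \theta^N\bigl(\mu(E_j)/V_j\bigr)^{1/2}
\]
for $k\gtrsim N^n$, where $\Pi_N$ is the orthogonal projection of $A^2$ onto a space containing the relevant data. More simply, we apply the Weyl/Allakhverdiev inequality to the finite-rank operator $A_jP_N$. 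Hence $s_k(S_j)\le Cd_j\exp(-ck^{1/n})$, and so $\sum_k s_k(S_j)^t\le C_t d_j^t$ for every $t>0$. Near the boundary we have $\nu\asymp\mu$, so $\mu(E_j)/V_j\asymp d_j$.

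The compact core $\mu_0$ is treated the same way, with a single fixed ball covering the core. Because $Rf$ vanishes at $0$, we expand $Rf$ in Taylor series at the origin. The derivative of order $\ge1$ picks up a factor $|z|$, so $|Rf(z)-R(P_Nf)(z)|\le C\theta^N|z|\,\|f\|_{A^2}$ on the core. This gives $s_k(S_{\mu_0})\le C M_0 e^{-ck^{1/n}}$ with $M_0=\int|z|^2\,d\mu_0$. Consequently $\|S_{\mu_0}\|_{\Schatten_t}^t\lesssim M_0^t$, which is bounded by the compact part of $I_t(\mu)$ via bounded overlap.

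The main obstacle is making the local approximation uniform across boundary cells. We need the rescaled Taylor remainder bound to hold with constants $\theta$ and $C$ independent of $j$. This requires that the polydisc $Q_j$ sit well inside a fixed dilate $\Q_{C\rho}(a_j)$ on which $f$ is controlled in $L^2$ with the normalization $V_j^{-1/2}$. The engulfing and doubling properties of McNeal polydiscs provide this, and we plan to reuse the same Cauchy-estimate step as in \cref{lem:local-radial-cauchy}. A secondary point is that the $t$-triangle inequality must be applied to a sum of positive operators with bounded overlap. Since it holds for arbitrary sums, no almost-orthogonality is needed.
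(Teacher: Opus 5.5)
Your treatment of the boundary collar is essentially the paper's argument. You use cells $E_j\subset\Q_\rho(a_j)$ and a Taylor approximation in rescaled distinguished coordinates, which gives $s_k(S_{\mu|_{E_j}})\le C d_j\exp(-ck^{1/n})$ with constants uniform in $j$. You expand $f$ and then apply $R$, while the paper expands $Rf$ directly; both work. You then use the $t$-subadditivity $\Tr(\sum_j S_j)^t\le\sum_j\Tr(S_j^t)$ and the lattice comparison.

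\textbf{The gap is in the compact core.} With a thin collar $B$, the core $\Omega\setminus B=\{\delta\ge\eta\}$ fills most of $\Omega$. In general it is not contained in any ball about the origin lying in $\Omega$, because $\Omega$ was only translated so that $0\in\Omega$, and $\delta(0)$ may be small compared with the extent of the core.

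The Taylor series at $0$ of a function in $A^2(\Omega)$ need not converge on the core. Let $\nu_0$ be the unit vector from $0$ to a nearest boundary point. Convexity gives $\operatorname{Re}\langle z,\nu_0\rangle<\delta(0)$ on $\Omega$, so $f(z)=(\delta(0)-\langle z,\nu_0\rangle)^{-1/4}$ lies in $A^2(\Omega)$. Yet its Taylor series at $0$ diverges wherever $|\langle z,\nu_0\rangle|>\delta(0)$, and this set typically contains core points. Hence the bound $|Rf-R(P_Nf)|\le C\theta^N|z|\,\|f\|_{A^2}$ on the core is false as stated. Moving the expansion to the centre of a single ball covering the core does not help either, since such a ball will in general not lie in $\Omega$, so Cauchy estimates are unavailable on it.

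The repair is straightforward, and either of two routes works.
\begin{itemize}
\item Follow the paper. Choose $K\Subset r_0\Omega\Subset r_1\Omega\Subset\Omega$ and bound $\partial_kf$ on $r_1\Omega$ by Cauchy's estimate. By Bernstein--Walsh, approximate $\partial_kf$ on the convex compact set $\overline{r_0\Omega}$ by polynomials $P_{k,N}f$ of degree at most $N$ with error $C\theta^N\|f\|_{A^2}$. Then write $Rf=\sum_kz_k\partial_kf$. The map $f\mapsto\delta\sum_kz_kP_{k,N}f|_K$ has rank at most $n\binom{N+n}{n}$ and approximates $J_K$ within $C\theta^NM_K^{1/2}$. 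This keeps the factor $|z|$ without expanding at the origin.
\item Shrink the core to a ball $\overline{B(0,r)}$ with $r<\delta(0)$. There your expansion at the origin is legitimate, and the homogeneous expansion gives $|Rf-R(P_Nf)|\le C\theta^N|z|\,\|f\|_{A^2}$. Cover the remaining compact region by finitely many interior cells, handled exactly like your boundary cells, and use $|z|\ge r$ on them to pass from $\mu$ to $\nu$.
\end{itemize}
With either change the rest of your proof goes through.
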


\begin{proof}
Choose a thin boundary collar $B$.  Then $|z|\ge c_B>0$ on a
fixed enlargement of $B$.  Take a sufficiently fine McNeal lattice
$\{a_j\}$.  Partition $B$ into disjoint Borel cells
$E_j\subset\Q_\rho(a_j)$.  Put
\[
 V_j=\V_\rho(a_j),\qquad
 d_j=\frac{\mu(E_j)}{V_j},\qquad
 S_j=S_{\mu|_{E_j}}.
\]
We first prove the local estimate
\begin{equation}\label{eq:small-local-trace}
 \Tr(S_j^t)\le C_t d_j^t.
\end{equation}
Define $J_j:A^2(\Omega)\to L^2(E_j,\mu)$ by
$J_jf(z)=\delta(z)Rf(z)$.  Then $S_j=J_j^*J_j$.
Cauchy's estimate on an enlarged polydisc gives
\[
 \sup_{z\in\Q_{c\rho}(a_j)}\delta(z)|Rf(z)|
 \le C V_j^{-1/2}\|f\|_{A^2}
\]
for a fixed $c>1$.  In the distinguished coordinates at $a_j$,
the radii of $\Q_\rho(a_j)$ are smaller than those of the enlarged
polydisc by a uniform factor.  Let $P_{j,N}f$ be the Taylor
polynomial of $Rf$ of total degree at most $N$.  The multivariable
Taylor estimate gives
\[
 \sup_{z\in E_j}\delta(z)|Rf(z)-P_{j,N}f(z)|
 \le C\theta^N V_j^{-1/2}\|f\|_{A^2},
 \qquad 0<\theta<1.
\]
Here the polynomial factor in the Taylor remainder is absorbed by
choosing a slightly larger $\theta<1$.
The operator $f\mapsto\delta P_{j,N}f|_{E_j}$ has rank at most
$\binom{N+n}{n}$.  It follows that
\[
 a_{\binom{N+n}{n}+1}(J_j)
 \le C d_j^{1/2}\theta^N,
 \qquad \|J_j\|\le C d_j^{1/2}.
\]
Since $\binom{N+n}{n}=O((N+1)^n)$, we have
$s_\ell(J_j)\le C d_j^{1/2}\exp(-c\ell^{1/n})$.  Therefore
\[
 \Tr(S_j^t)=\sum_{\ell\ge1}s_\ell(J_j)^{2t}
 \le C_t d_j^t,
\]
which proves \eqref{eq:small-local-trace}.  These constants are
independent of $j$ and of $\mu$.

For finite sums of positive operators, trace subadditivity for
$0<t<1$ gives
\[
 \Tr\left(\sum_{j=1}^M S_j\right)^t
 \le\sum_{j=1}^M\Tr(S_j^t)
 \le C_t\sum_{j=1}^M d_j^t.
\]
Since $|z|\asymp1$ on the collar, bounded overlap and the lattice
comparison show
\[
 \sum_jd_j^t
 \le C\sum_j
 \left(\frac{\nu(\Q_\rho(a_j))}{V_j}\right)^t
 \le C I_t(\mu).
\]
The partial sums are Cauchy in $\Schatten_t$.  Their limit is
$S_{\mu|_B}$ in the quadratic-form sense.

Now consider $K=\Omega\setminus B$.  Put
$M_K=\int_K|z|^2\,\dd\mu(z)$.  Define
$J_Kf=\delta Rf|_K$ as a map into $L^2(K,\mu)$.
Choose $K\Subset r_0\Omega\Subset r_1\Omega\Subset\Omega$.
Cauchy's estimate on $r_1\Omega$ and polynomial approximation on
$r_0\Omega$ give polynomials $P_{j,N}f$ of degree at most $N$.
They satisfy
\[
 \sup_K|\partial_jf-P_{j,N}f|
 \le C\theta^N\|f\|_{A^2},\qquad 1\le j\le n.
\]
Since $Rf(z)=\sum_jz_j\partial_jf(z)$, the map
$f\mapsto\delta\sum_jz_jP_{j,N}f|_K$ approximates $J_K$.
Its error is at most $C M_K^{1/2}\theta^N$, and its rank is at most
$n\binom{N+n}{n}$.  Summing the resulting singular-value bound gives
\[
 \|S_{\mu|_K}\|_{\Schatten_t}^t
 =\sum_{\ell\ge1}s_\ell(J_K)^{2t}
 \le C_t M_K^t.
\]
A finite number of lattice cells cover $K$.  Their volumes are bounded
below, so the lattice comparison gives $M_K^t\le C_t I_t(\mu)$.
Trace subadditivity applied to $S_\mu=S_{\mu|_B}+S_{\mu|_K}$
completes the proof.
\end{proof}

\subsection{Schatten criteria for companions}

\begin{lemma}\label{lem:quadratic-comparison-companion}
Let $g\in\Hol(\Omega)$ and let
$\varphi\in\mathcal H(\Omega)$ satisfy $\varphi(0)=0$.  Put
\[
 \mu=\mu_{g,\varphi,2}.
\]
Assume that $\mu$ is locally finite.
Then $C_{g,\varphi}$ is bounded on $A^2(\Omega)$ if and only if
$S_\mu$ is bounded.  In that case
\[
 C_{g,\varphi}^*C_{g,\varphi}
 \asymp
 S_\mu
\]
in the sense of quadratic forms.
\end{lemma}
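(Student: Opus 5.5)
The plan is to reduce everything to the norm equivalence of \cref{prop:companion-reduction} with $q=2$, and then extend from the dense class $\Hol(\overline\Omega)$ by \cref{lem:radial-approximation}. For $f\in\Hol(\overline\Omega)$ the proposition gives
\[
 \|C_{g,\varphi}f\|_{A^2}^2\asymp\int_\Omega|Rf(w)|^2\delta(w)^2\,\dd\mu(w)=\mathfrak s_\mu(f,f),
\]
with constants depending only on $\Omega$. These constants come from \cref{lem:radial-lp-companion} applied to $h=C_{g,\varphi}f$, which satisfies $h(0)=0$ by \cref{lem:companion-identity}. The right side is exactly the quadratic form of $S_\mu$ whenever that form is bounded. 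Note also that $\mu(\{0\})=0$ by construction, since $\varphi^{-1}(\{0\})$ is omitted from the definition of $\mu$.

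\emph{Boundedness equivalence.} If $C_{g,\varphi}$ is bounded on $A^2$, the displayed comparison gives $\mathfrak s_\mu(f,f)\le C\|C_{g,\varphi}\|^2\|f\|^2$ on $\Hol(\overline\Omega)$.

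To pass to all of $A^2$, take $f\in A^2$ and its dilates $f_r\to f$ in $A^2$. Then $Rf_r\to Rf$ locally uniformly, so Fatou's lemma gives $\mathfrak s_\mu(f,f)\le\liminf_r\mathfrak s_\mu(f_r,f_r)\le C\|f\|^2$. Hence the form is bounded, and $S_\mu$ exists as a bounded positive operator by the Riesz representation for bounded sesquilinear forms (polarization).

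Conversely, suppose $S_\mu$ is bounded. Then $\|C_{g,\varphi}f\|^2\le C\langle S_\mu f,f\rangle\le C\|S_\mu\|\|f\|^2$ on the dense class, so $C_{g,\varphi}$ extends boundedly. Alternatively, one can invoke \cref{thm:derivative-carleson} with $p=q=2$: both boundedness statements are equivalent to $\sup\Dcal_{\mu,2,2,\rho}<\infty$. The direct route is cleaner, and I will use it.

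\emph{Quadratic form comparison.} Once both operators are bounded, the inequalities
\[
 c\,\langle S_\mu f,f\rangle\le\langle C_{g,\varphi}^*C_{g,\varphi}f,f\rangle=\|C_{g,\varphi}f\|^2\le C\langle S_\mu f,f\rangle
\]
hold on $\Hol(\overline\Omega)$. Both sides are continuous in the $A^2$ norm: the left because $S_\mu$ is bounded, the right because the extension of $C_{g,\varphi}$ is continuous. By density they therefore hold on all of $A^2$. This is the asserted operator comparison.

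\emph{Main obstacle.} The only delicate step is identifying $\mathfrak s_\mu(f,f)$ for general $f\in A^2$ with the limit along the dense class. I need to be sure that the bounded extension of $C_{g,\varphi}$ still satisfies $\|C_{g,\varphi}f\|^2\asymp\mathfrak s_\mu(f,f)$, and not just an inequality in one direction obtained from Fatou.

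I would handle this by showing $\mathfrak s_\mu(f_r-f,f_r-f)\to0$ whenever the form is bounded, since $\mathfrak s_\mu^{1/2}$ is then a continuous seminorm on $A^2$. Since $f_r\to f$ in $A^2$, that continuity gives the convergence. This makes both sides of the comparison continuous, and the extension argument closes. Local finiteness of $\mu$ is what makes the form well defined on polynomials to begin with. It needs a separate check only near the origin, where $Rf(0)=0$ handles it.
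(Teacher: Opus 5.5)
Your proposal is correct and follows the paper's route. Like the paper, you apply \cref{prop:companion-reduction} with $q=2$ on the dense class $\Hol(\overline\Omega)$ and then pass to all of $A^2(\Omega)$ by density and continuity of both quadratic forms. Your Fatou argument shows directly that the integral form is bounded on all of $A^2$, which handles the paper's terse ``identify the extended form with the integral'' step more cleanly. That argument never uses local finiteness of $\mu$, so your closing remark crediting local finiteness with making the form well defined is unnecessary; on the diagonal the form is a $[0,\infty]$-valued integral in any case.
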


\begin{proof}
On the dense subspace $\Hol(\overline\Omega)$,
\cref{prop:companion-reduction} with $q=2$ gives
\[
 \|C_{g,\varphi}f\|_{A^2}^2
 \asymp
 \int_\Omega|Rf(w)|^2\delta(w)^2\,\dd\mu(w).
\]
The constants are independent of $f$.  If $S_\mu$ is bounded,
the right side is bounded by $C\|f\|_{A^2}^2$.  Hence
$C_{g,\varphi}$ extends uniquely to a bounded operator on $A^2$.
Conversely, if $C_{g,\varphi}$ is bounded, the form on the right
extends by density to a bounded positive form.  Its representing
operator is $S_\mu$: local finiteness and locally uniform convergence
of the derivatives identify the extended form with the integral.
Applying the same comparison to the bounded
extensions gives
\[
 \langle C_{g,\varphi}^*C_{g,\varphi}f,f\rangle
 \asymp\langle S_\mu f,f\rangle,
 \qquad f\in A^2(\Omega).
\]
\end{proof}

\begin{lemma}\label{lem:positive-comparison}
Let $A$ and $B$ be positive bounded operators on a Hilbert space.  Assume
that
\[
 c\langle Bx,x\rangle
 \le
 \langle Ax,x\rangle
 \le
 C\langle Bx,x\rangle
 \qquad x\in H .
\]
Then, for every $0<t<\infty$,
\[
 A\in\Schatten_t
 \Longleftrightarrow
 B\in\Schatten_t,
\]
and
\[
 \|A\|_{\Schatten_t}^t\asymp \|B\|_{\Schatten_t}^t .
\]
\end{lemma}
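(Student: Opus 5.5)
The plan is to reduce the comparison to the min–max characterization of eigenvalues. For a positive compact operator, the Courant–Fischer formula reads
\[
 s_k(A)=\lambda_k(A)=\min_{\dim M\le k-1}\ \max_{x\perp M,\ \|x\|=1}\langle Ax,x\rangle ,
\]
and the same formula for a positive bounded operator gives the numbers $\inf_{\dim M\le k-1}\sup_{x\perp M,\|x\|=1}\langle Ax,x\rangle$, which decrease to the top of the essential spectrum. The hypothesis gives, for every subspace $M$ and every unit $x\perp M$, the two-sided bound $c\langle Bx,x\rangle\le\langle Ax,x\rangle\le C\langle Bx,x\rangle$. Taking the supremum over $x$ and then the infimum over $M$ will yield
\[
 c\,\lambda_k(B)\le\lambda_k(A)\le C\,\lambda_k(B),\qquad k\ge1 .
\]

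The steps are as follows. First I would settle compactness. If $B\in\Schatten_t$, then $B$ is compact, and $A^{1/2}$ is controlled by $B^{1/2}$: $\|A^{1/2}x\|^2\le C\|B^{1/2}x\|^2$. Douglas' lemma then gives a bounded $D$ with $A^{1/2}=DB^{1/2}$, so $A$ is compact. Alternatively, the min–max numbers of $A$ are at most $C$ times those of $B$, which tend to $0$, and that forces the essential spectrum of $A$ to be $\{0\}$. Second, with both operators compact, I would apply the min–max inequality above to the eigenvalue sequences; multiplicities are handled automatically by the formula. Third, I would raise the comparison to the power $t$ and sum:
\[
 c^t\sum_k\lambda_k(B)^t\le\sum_k\lambda_k(A)^t\le C^t\sum_k\lambda_k(B)^t .
\]
Since $s_k=\lambda_k$ for positive operators, this is exactly $\|A\|_{\Schatten_t}^t\asymp\|B\|_{\Schatten_t}^t$ for every $0<t<\infty$. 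The argument is symmetric in $A$ and $B$, so it gives the equivalence in both directions.

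The only delicate point is the first step: justifying that the min–max comparison transfers compactness before the singular values are even defined as a sequence tending to zero. I would handle it through the Douglas factorization $A^{1/2}=DB^{1/2}$, or equivalently by noting that $\|A^{1/2}x\|\le C^{1/2}\|B^{1/2}x\|$ makes $A^{1/2}$ compact whenever $B^{1/2}$ is. Everything else is the standard Weyl/Courant–Fischer monotonicity. No convexity in $t$ is needed, which is why the lemma holds for all $t>0$, including $0<t<1$.
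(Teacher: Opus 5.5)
Your proposal is correct and follows the same route as the paper. Both arguments transfer compactness through $0\le A\le CB$ (and $B\le c^{-1}A$ for the converse), compare the eigenvalue sequences by min--max, then raise to the power $t$ and sum. Your Douglas-factorization step $A^{1/2}=DB^{1/2}$ simply supplies the justification for the compactness transfer, which the paper asserts without proof.
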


\begin{proof}
Assume first that $B$ is compact.  Then
\[
 0\le A\le C B
\]
implies that $A$ is compact.  The min-max principle for positive compact
operators gives
\[
 \lambda_j(A)\le C\lambda_j(B),
 \qquad j\ge1.
\]
The other form inequality gives
\[
 \lambda_j(B)\le c^{-1}\lambda_j(A),
 \qquad j\ge1.
\]
The same argument starts from the compactness of $A$.  For positive
operators the singular values are the eigenvalues.  Raising to the
power $t$ and summing gives the result for every $0<t<\infty$.
\end{proof}

\begin{theorem}\label{thm:schatten-companion}
Let $g\in\Hol(\Omega)$ and
$\varphi\in\mathcal H(\Omega)$ satisfy $\varphi(0)=0$.  Let
$2\le s<\infty$, and assume that $\mu_{g,\varphi,2}$ is locally
finite.  Then
\[
 C_{g,\varphi}\in\Schatten_s(A^2(\Omega))
 \quad\Longleftrightarrow\quad
 \Acal_{g,\varphi,2,\rho}\in L^s(\Omega,d\lambda_\Omega).
\]
Moreover,
\[
 \|C_{g,\varphi}\|_{\Schatten_s}^s
 \asymp\int_\Omega
 \Acal^{\circ}_{g,\varphi,2,\rho}(z)^s\,\dd\lambda_\Omega(z).
\]
\end{theorem}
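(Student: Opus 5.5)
The plan is to reduce the statement to \cref{thm:derivative-toeplitz} with $t=s/2\ge1$, using the quadratic-form comparison of \cref{lem:quadratic-comparison-companion} and the spectral comparison of \cref{lem:positive-comparison}. Put $\mu=\mu_{g,\varphi,2}$, which is locally finite by hypothesis. It satisfies $\mu(\{0\})=0$ because the definition omits $\varphi^{-1}(\{0\})$. In particular $\mu$ assigns no mass to the origin, since $\mu(E)$ only integrates over $\varphi^{-1}(E\setminus\{0\})$.

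\emph{Boundedness.} Either side of the equivalence forces $C_{g,\varphi}$, equivalently $S_\mu$, to be bounded.

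If $C_{g,\varphi}\in\Schatten_s$, it is bounded on $A^2$ by definition.

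Conversely, assume $\Acal_{g,\varphi,2,\rho}\in L^s(d\lambda_\Omega)$, i.e.\ $\widehat\mu_\rho\in L^{s/2}(d\lambda_\Omega)$. Since $\widehat\mu_\rho$ is essentially constant on small polydiscs, the lattice form gives $\sum_j(\mu(Q_j)/V_j)^{s/2}<\infty$. Then the terms are bounded, so $\sup_j\mu(Q_j)/V_j<\infty$. By \cref{lem:radius-independent-companion} this is $\Dcal_{\mu,2,2,\rho}\in L^\infty$. The $p=q=2$ case of \cref{thm:derivative-carleson} then shows that the form $\mathfrak s_\mu$ is bounded, so $S_\mu$ is bounded. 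By \cref{lem:quadratic-comparison-companion}, $C_{g,\varphi}$ is bounded.

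\emph{Equivalence.} In the bounded case, \cref{lem:quadratic-comparison-companion} gives $C_{g,\varphi}^*C_{g,\varphi}\asymp S_\mu$ as quadratic forms. \Cref{lem:positive-comparison} with $t=s/2$ then gives
\[
 \|C_{g,\varphi}\|_{\Schatten_s}^s
 =\|C_{g,\varphi}^*C_{g,\varphi}\|_{\Schatten_{s/2}}^{s/2}
 \asymp\|S_\mu\|_{\Schatten_{s/2}}^{s/2}.
\]
Since $s/2\ge1$, \cref{thm:derivative-toeplitz} applies. Its membership criterion reads $\widehat\mu_\rho\in L^{s/2}(d\lambda_\Omega)$. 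Because $\widehat\mu_\rho^{\,s/2}=\Acal_{g,\varphi,2,\rho}^s$, this is exactly $\Acal_{g,\varphi,2,\rho}\in L^s(d\lambda_\Omega)$.

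\emph{Norm formula.} The quantitative estimate \eqref{eq:corrected-derivative-trace} with $\nu=|w|^2\mu$ gives
\[
 \|S_\mu\|_{\Schatten_{s/2}}^{s/2}\asymp
 \int_\Omega\Bigl(\frac{\nu(\Q_\rho(z))}{\V_\rho(z)}\Bigr)^{s/2}\dd\lambda_\Omega
 =\int_\Omega\Acal^{\circ}_{g,\varphi,2,\rho}(z)^s\,\dd\lambda_\Omega(z).
\]
The constants there are independent of $\mu$, hence of $g$ and $\varphi$.

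\emph{Main obstacle.} The delicate point is the bookkeeping around boundedness and the domain of $S_\mu$. \Cref{thm:derivative-toeplitz} is stated for $S_\mu$ as an operator, so it implicitly presumes the form is bounded. I therefore establish boundedness first, as above, before invoking either comparison lemma. I also check that the quadratic-form identification in \cref{lem:quadratic-comparison-companion} holds on all of $A^2$, not only on $\Hol(\overline\Omega)$, so that the min-max argument of \cref{lem:positive-comparison} applies.

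If either side is infinite, the same chain of equivalences shows the other side is infinite too. For example, $C_{g,\varphi}\notin\Schatten_s$ while bounded forces $S_\mu\notin\Schatten_{s/2}$, hence the integral diverges. So the asymptotic norm identity holds in the extended sense.
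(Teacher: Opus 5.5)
Your proposal is correct and follows the paper's proof. You set $t=s/2\ge1$ and use $\|T\|_{\Schatten_s}^s=\|T^*T\|_{\Schatten_t}^t$, pass from $C_{g,\varphi}^*C_{g,\varphi}$ to $S_\mu$ via the quadratic-form and positive-operator comparison lemmas, and read off membership and the corrected norm formula from the derivative trace theorem. Your preliminary step, which derives boundedness from $\Acal_{g,\varphi,2,\rho}\in L^s(d\lambda_\Omega)$ through the lattice sum and the $p=q=2$ Carleson embedding, makes explicit a point the paper leaves implicit, and you handle it correctly.
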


\begin{proof}
Put $t=s/2\ge1$ and $\mu=\mu_{g,\varphi,2}$, which satisfies
$\mu(\{0\})=0$.  For a bounded operator $T$ on a Hilbert space,
\[
 T\in\Schatten_s\ \Longleftrightarrow\ T^*T\in\Schatten_t,
 \qquad
 \|T\|_{\Schatten_s}^s=\|T^*T\|_{\Schatten_t}^{t}.
\]
The quadratic comparison in
\cref{lem:quadratic-comparison-companion,lem:positive-comparison}
and the derivative trace criterion in
\cref{thm:derivative-toeplitz} give
\[
\begin{aligned}
 C_{g,\varphi}\in\Schatten_s
 &\Longleftrightarrow S_\mu\in\Schatten_t\\
 &\Longleftrightarrow
 \widehat\mu_\rho:=\frac{\mu(\Q_\rho(\cdot))}{\V_\rho(\cdot)}
 \in L^t(d\lambda_\Omega)\\
 &\Longleftrightarrow
 \Acal_{g,\varphi,2,\rho}\in L^s(d\lambda_\Omega),
\end{aligned}
\]
since $\Acal_{g,\varphi,2,\rho}^2=\widehat\mu_\rho$.
The same comparisons and \eqref{eq:corrected-derivative-trace}
prove the norm formula; its quantitative average uses
$|w|^2\,d\mu(w)$ and hence $\Acal^{\circ}_{g,\varphi,2,\rho}$.
\end{proof}

\begin{corollary}\label{cor:small-schatten-sufficient}
Let $0<s<2$, and suppose that $\mu_{g,\varphi,2}$ is locally finite.
If
\[
 \Acal^{\circ}_{g,\varphi,2,\rho}
 \in L^s(\Omega,d\lambda_\Omega),
\]
then $C_{g,\varphi}\in\Schatten_s(A^2(\Omega))$, with
\[
 \|C_{g,\varphi}\|_{\Schatten_s}^s
 \le C_s\int_\Omega
 \Acal^{\circ}_{g,\varphi,2,\rho}(z)^s
 \dd\lambda_\Omega(z).
\]
In particular, the uncorrected condition
$\Acal_{g,\varphi,2,\rho}\in L^s(d\lambda_\Omega)$ is sufficient.
\end{corollary}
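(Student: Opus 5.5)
The plan is to mirror the proof of \cref{thm:schatten-companion}, replacing the trace criterion of \cref{thm:derivative-toeplitz} by the small-exponent bound of \cref{prop:small-derivative-sufficient}. Put $t=s/2\in(0,1)$ and $\mu=\mu_{g,\varphi,2}$. By construction $\mu$ omits $\varphi^{-1}(\{0\})$, so $\mu(\{0\})=0$, and $\mu$ is locally finite by hypothesis. The weighted measure $\dd\nu(w)=|w|^2\,\dd\mu(w)$ satisfies
\[
 \frac{\nu(\Q_\rho(z))}{\V_\rho(z)}=\Acal^{\circ}_{g,\varphi,2,\rho}(z)^2 .
\]
Hence $I_t(\mu)=\int_\Omega \Acal^{\circ}_{g,\varphi,2,\rho}(z)^s\,\dd\lambda_\Omega(z)<\infty$, and \cref{prop:small-derivative-sufficient} gives $S_\mu\in\Schatten_t$ with $\|S_\mu\|_{\Schatten_t}^t\le C_t I_t(\mu)$.

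Next I would transfer this to $C_{g,\varphi}$. Since $S_\mu\in\Schatten_t$, it is in particular bounded. \cref{lem:quadratic-comparison-companion} then shows that $C_{g,\varphi}$ extends boundedly to $A^2(\Omega)$ and that $C_{g,\varphi}^*C_{g,\varphi}\asymp S_\mu$ as quadratic forms. \cref{lem:positive-comparison} holds for every $0<t<\infty$, so $C_{g,\varphi}^*C_{g,\varphi}\in\Schatten_t$ with comparable quasi-norm. Finally, the identity $\|T\|_{\Schatten_s}^s=\|T^*T\|_{\Schatten_{s/2}}^{s/2}$ holds for all $s>0$, because $s_j(T)^2=\lambda_j(T^*T)$. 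This yields the displayed bound with $C_s$ depending on $C_t$ and on the comparison constants of \cref{prop:companion-reduction}.

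For the final sentence about the uncorrected condition, I need to show that $\Acal_{g,\varphi,2,\rho}\in L^s(d\lambda_\Omega)$ implies $\Acal^{\circ}_{g,\varphi,2,\rho}\in L^s(d\lambda_\Omega)$. This is immediate because $\Omega$ is bounded. Indeed $|w|\le C$ on $\Omega$, so $\nu\le C^2\mu$ and hence $\Acal^{\circ}\le C\Acal$ pointwise.

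I expect no serious obstacle here, since all the work is in \cref{prop:small-derivative-sufficient}. The only step needing care is that \cref{lem:positive-comparison} and the $T^*T$ identity are valid below exponent one. Both rest only on eigenvalue monotonicity (min--max) and the relation $s_j(T)^2=\lambda_j(T^*T)$, so no duality or triangle inequality in $\Schatten_t$ is required.
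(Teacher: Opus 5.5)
Your proposal is correct and matches the paper's proof essentially step for step. You set $t=s/2$ and identify $I_t(\mu_{g,\varphi,2})$ with $\int_\Omega \Acal^{\circ}_{g,\varphi,2,\rho}(z)^s\,\dd\lambda_\Omega(z)$, then apply \cref{prop:small-derivative-sufficient}. You transfer the bound to $C_{g,\varphi}^*C_{g,\varphi}$ through \cref{lem:quadratic-comparison-companion} and eigenvalue monotonicity, which is valid for every $t>0$, and use boundedness of $|w|$ on $\Omega$ for the uncorrected condition, exactly as the paper does.
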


\begin{proof}
Put $t=s/2$ and $\mu=\mu_{g,\varphi,2}$.  By
\cref{prop:small-derivative-sufficient}, $S_\mu\in\Schatten_t$.
The quadratic comparison in \cref{lem:quadratic-comparison-companion}
and monotonicity of eigenvalues give
\[
 \|C_{g,\varphi}\|_{\Schatten_s}^s
 =\Tr(C_{g,\varphi}^*C_{g,\varphi})^t
 \le C_t\Tr(S_\mu^t)
 \le C_t\int_\Omega
 \Acal^{\circ}_{g,\varphi,2,\rho}(z)^s
 \dd\lambda_\Omega(z).
\]
Since $|z|$ is bounded on $\Omega$, the uncorrected condition implies
the corrected one.
\end{proof}

For $t<1$, the diagonal Jensen inequality used in the proof of
\cref{thm:derivative-toeplitz} has the opposite direction.  Thus the
preceding sufficient condition does not prove necessity in this range.

The Hilbert--Schmidt case admits a useful diagonal-kernel formulation.
It also makes explicit the relation with composition--differentiation
operators on the ball studied in \cite{Abkar2024}.

\begin{unnumberedcorollary}
Let $K$ be the Bergman kernel of $\Omega$.  Under the hypotheses of
\cref{thm:schatten-companion}, the following are equivalent:
\[
 C_{g,\varphi}\in\Schatten_2(A^2(\Omega))
\]
and
\[
 \int_{\{\varphi(z)\ne0\}} |g(z)|^2\delta(z)^2
 \left[R_w\overline{R_w}K(w,w)\right]_{w=\varphi(z)}\,\dd v(z)
 <\infty.
\]
Moreover, the last integral is comparable to
$\|C_{g,\varphi}\|_{\Schatten_2}^2$.
\end{unnumberedcorollary}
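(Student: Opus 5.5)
The plan is to compute the Hilbert--Schmidt norm directly as a trace, in the spirit of the rank-one integral formula used in the proof of \cref{thm:derivative-toeplitz}, with $t=1$. Put $\mu=\mu_{g,\varphi,2}$ and let $H_w$ be the representer of $f\mapsto\delta(w)Rf(w)$. As there, $\|H_w\|^2=h(w)=\delta(w)^2R_w\overline{R_w}K(w,w)$. Since $C_{g,\varphi}\in\Schatten_2$ if and only if $C_{g,\varphi}^*C_{g,\varphi}\in\Schatten_1$, \cref{lem:quadratic-comparison-companion} and \cref{lem:positive-comparison} with $t=1$ reduce the claim to
\[
 \Tr S_\mu\asymp \|C_{g,\varphi}\|_{\Schatten_2}^2 .
\]
Boundedness is automatic once the trace is finite, because $\|S_\mu\|\le\Tr S_\mu$. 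A bounded positive form with finite trace integral defines $S_\mu$, and the comparison lemma then gives boundedness of $C_{g,\varphi}$ as well.

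Next I will identify $\Tr S_\mu$ with the stated integral. Fix an orthonormal basis $\{e_k\}$ of $A^2$. By monotone convergence (Tonelli),
\[
 \Tr S_\mu=\sum_k\int_\Omega\delta(w)^2|Re_k(w)|^2\,\dd\mu(w)
 =\int_\Omega\delta(w)^2\sum_k|Re_k(w)|^2\,\dd\mu(w).
\]
Because $\sum_k e_k(z)\overline{e_k(w)}=K(z,w)$ converges locally uniformly together with its derivatives, we have $\sum_k|Re_k(w)|^2=R_w\overline{R_w}K(w,w)$. Hence $\Tr S_\mu=\int_\Omega h\,\dd\mu$. This holds in $[0,\infty]$, so no boundedness assumption is needed at this stage.

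Finally I will undo the pull-back. By the defining identity of $\mu_{g,\varphi,2}$ applied with the nonnegative function $\delta(w)^{-2}h(w)=R_w\overline{R_w}K(w,w)$,
\[
 \int_\Omega h\,\dd\mu=\int_{\{\varphi(z)\ne0\}}|g(z)|^2\delta(z)^2
 \bigl[R_w\overline{R_w}K(w,w)\bigr]_{w=\varphi(z)}\,\dd v(z),
\]
which is exactly the integral in the statement. Chaining the three steps gives both the equivalence and the comparability with $\|C_{g,\varphi}\|_{\Schatten_2}^2$, with constants coming only from \cref{prop:companion-reduction}.

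The main point to check is that using the unweighted kernel quantity is legitimate, given that the paper insists on the $|w|^2$ correction elsewhere. This is fine: by \eqref{eq:interior-derivative-kernel}, $h(w)\asymp|w|^2$ near the origin, so the vanishing of $Rf$ at $0$ is already built into $h$. As a consistency check, \eqref{eq:interior-derivative-kernel} and \eqref{eq:boundary-derivative-kernel} give $h\asymp\V_\rho^{-1}$ on the collar and $h\asymp|w|^2$ on the core. The integral $\int h\,\dd\mu$ is therefore comparable to $\int\Acal^{\circ}_{g,\varphi,2,\rho}{}^2\,\dd\lambda_\Omega$, in agreement with \cref{thm:schatten-companion} at $s=2$.
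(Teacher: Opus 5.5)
Your proposal is correct and follows the paper's proof essentially step for step: you reduce to $\Tr S_\mu$ via \cref{lem:quadratic-comparison-companion,lem:positive-comparison} at $t=1$, use Tonelli with the differentiated reproducing identity $\sum_k|Re_k(w)|^2=R_w\overline{R_w}K(w,w)$, and then unwind the pull-back. The one point to tighten is the boundedness direction: invoking $\|S_\mu\|\le\Tr S_\mu$ presupposes that $S_\mu$ already exists as a bounded operator, so argue instead, as the paper does, from the pointwise bound $\delta(w)^2|Rf(w)|^2=|\langle f,H_w\rangle|^2\le h(w)\|f\|_{A^2}^2$ integrated against $\mu$ (equivalently, apply your basis-independent Tonelli identity to an orthonormal basis containing $f$).
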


\begin{proof}
Put $\mu=\mu_{g,\varphi,2}$ and write
\[
 I_\mu=\int_\Omega\delta(w)^2
 R_w\overline{R_w}K(w,w)\,\dd\mu(w).
\]
If $I_\mu<\infty$, the reproducing formula and Cauchy--Schwarz give
$\int|Rf|^2\delta^2\,\dd\mu\le I_\mu\|f\|_{A^2}^2$.
Thus $S_\mu$ is bounded, and so is $C_{g,\varphi}$ by
\cref{lem:quadratic-comparison-companion}.  The Tonelli identity
below gives $\Tr S_\mu=I_\mu<\infty$.  Conversely, if
$C_{g,\varphi}$ is Hilbert--Schmidt,
\cref{lem:quadratic-comparison-companion,lem:positive-comparison}
imply that $S_\mu$ is trace class and hence $I_\mu<\infty$.  In either case,
the quadratic and positive-operator comparisons give
\[
 \|C_{g,\varphi}\|_{\Schatten_2}^2
 =\Tr(C_{g,\varphi}^*C_{g,\varphi})
 \asymp \Tr S_{\mu_{g,\varphi,2}}.
\]
If $\{e_k\}$ is an orthonormal basis of $A^2(\Omega)$, Tonelli's
theorem and differentiation of the reproducing identity give
\[
\begin{aligned}
 \Tr S_{\mu_{g,\varphi,2}}
 &=\int_\Omega \delta(w)^2
   \sum_k|Re_k(w)|^2\,\dd\mu_{g,\varphi,2}(w)\\
 &=\int_\Omega \delta(w)^2
   R_w\overline{R_w}K(w,w)\,\dd\mu_{g,\varphi,2}(w).
\end{aligned}
\]
The definition of the pull-back measure converts the last expression
into the integral in the statement.
\end{proof}

\begin{corollary}\label{cor:alpha-family}
Let $\alpha>0$ and define
\[
 C^{(\alpha)}_{g,\varphi}f(z)
 =\int_0^1 Rf(\varphi(tz))g(tz)t^{\alpha-1}\,\dd t.
\]
Under the respective hypotheses of
\cref{thm:p-le-q,thm:q-less-p,thm:schatten-companion}, their
boundedness, compactness, essential-norm, and Schatten criteria
also hold with $C_{g,\varphi}$ replaced by
$C^{(\alpha)}_{g,\varphi}$.  The same corrected norm comparisons
hold, with constants that may also depend on $\alpha$.
The small-exponent sufficient condition in
\cref{cor:small-schatten-sufficient} also holds for this family.
\end{corollary}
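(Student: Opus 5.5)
The plan is to reduce everything to the case $\alpha=0$ through an analogue of \cref{prop:companion-reduction}: for $f\in\Hol(\overline\Omega)$ we will show
\[
 \|C^{(\alpha)}_{g,\varphi}f\|_{A^q}^q
 \asymp_\alpha
 \int_\Omega |Rf(w)|^q\delta(w)^q\,\dd\mu_{g,\varphi,q}(w).
\]
Once this equivalence holds with constants depending only on $\alpha$, the domain and the exponents, every later argument goes through unchanged. That covers the proofs of \cref{thm:p-le-q,thm:q-less-p}, and of \cref{lem:quadratic-comparison-companion} feeding \cref{thm:schatten-companion} and \cref{cor:small-schatten-sufficient}. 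Those proofs only use the operator through this norm comparison, together with density of $\Hol(\overline\Omega)$ from \cref{lem:radial-approximation} and the fact that the dilation $S_r$ is compact. In particular, $C^{(\alpha)}_{g,\varphi}S_r$ is compact, and the difference $C^{(\alpha)}_{g,\varphi}-C^{(\alpha)}_{g,\varphi}S_r$ is again controlled by the derivative embedding applied to $f-S_rf$. The quadratic comparison $(C^{(\alpha)}_{g,\varphi})^*C^{(\alpha)}_{g,\varphi}\asymp_\alpha S_\mu$ then follows in the same way, and \cref{lem:positive-comparison} transfers the Schatten statements.

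For the key comparison, put $h=C^{(\alpha)}_{g,\varphi}f$. By the remark after \cref{lem:companion-identity}, $h(0)=0$ and $(R+\alpha)h=g\,Rf\circ\varphi=:H$. So it suffices to show that, for holomorphic $h$ with $h(0)=0$,
\[
 \|h\|_{A^q}^q\asymp_\alpha\int_\Omega |(R+\alpha)h|^q\delta^q\,\dd v .
\]
Comparing with \cref{lem:radial-lp-companion}, this amounts to showing $\int|Rh|^q\delta^q\asymp_\alpha\int|(R+\alpha)h|^q\delta^q$ for such $h$.

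\emph{Upper bound.} The direction $\lesssim\|h\|_{A^q}^q$ for the right side follows from \cref{lem:radial-lp-companion} (reverse part) together with $\int|h|^q\delta^q\le C\|h\|_{A^q}^q$.

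\emph{Lower bound.} For the other direction we invert: $h(z)=\int_0^1 H(tz)t^{\alpha-1}\,\dd t$. We then repeat the proof of \cref{lem:radial-lp-companion} with $\dd t/t$ replaced by $t^{\alpha-1}\dd t$, which only improves integrability at $t=0$ because $t^{\alpha}\le 1$. The interior estimate \eqref{eq:lp-interior-companion} is unchanged. Along each ray, $\frac{\dd}{\dd r}\bigl(r^\alpha h(r\xi)\bigr)=r^{\alpha-1}H(r\xi)$, so $h(r\xi)=r^{-\alpha}\bigl((1/2)^\alpha h(\xi/2)+\int_{1/2}^r H(t\xi)t^{\alpha-1}\,\dd t\bigr)$. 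Since $r^{-\alpha}\le 2^\alpha$ on $[1/2,1)$, the same Hardy inequality gives
\[
 \|h\|_{A^q}^q\le C_\alpha\int|H|^q\delta^q\,\dd v ,
\]
with $H(0)=0$ still holding.

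\emph{Reverse direction for $H$.} Finally, $\int|H|^q\delta^q\le C_\alpha\|h\|^q_{A^q}$ follows from $|H|\le|Rh|+\alpha|h|$, the Cauchy estimate in \cref{lem:radial-lp-companion}, and $\delta\le C$.

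The main obstacle is that the Schatten and norm statements rely on the corrected weight $|w|^q$ near the origin and on equivalence with explicit constants. Both are preserved because the comparison above is a two-sided inequality with constants depending only on $\alpha$, and the measure $\mu_{g,\varphi,q}$ is literally the same. Care is also needed to check that the $\alpha$-version of the inner trace estimate \eqref{eq:lp-inner-trace-companion} holds. That estimate follows from the interior bound exactly as before.
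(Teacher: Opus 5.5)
Your proposal is correct and follows the paper's proof essentially step for step. Both prove $\|h\|_{A^q}^q\asymp_\alpha\int_\Omega|(R+\alpha)h|^q\delta^q\dd v$ for $h(0)=0$ using the ray identity $\frac{d}{dr}\bigl(r^\alpha h(r\xi)\bigr)=r^{\alpha-1}H(r\xi)$, the interior Cauchy/Minkowski bound, the boundary Hardy inequality and the Cauchy--Whitney reverse estimate; they then apply this to $h=C^{(\alpha)}_{g,\varphi}f$, rerun the $\alpha=0$ arguments verbatim, and handle the Schatten statements through the quadratic-form comparison. (Your ``upper bound'' and ``reverse direction for $H$'' paragraphs prove the same inequality, so one of them can be dropped.)
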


\begin{proof}
The only change is the radial shift.  Let $h\in\Hol(\Omega)$ satisfy
$h(0)=0$ and put $H=(R+\alpha)h$.  Then $H(0)=0$.  For fixed
$\xi\in\partial\Omega$,
\[
 \frac{\dd}{\dd r}\{r^\alpha h(r\xi)\}
 =
 r^{\alpha-1}H(r\xi).
\]
Hence
\[
 h(r\xi)
 =
 r^{-\alpha}\int_0^r t^{\alpha-1}H(t\xi)\,\dd t .
\]
We control the compact part using $H(0)=0$.  For $z\in(2/3)\Omega$,
\[
 h(z)=\int_0^1t^{\alpha-1}H(tz)\,\dd t.
\]
Choose $t_0>0$ as in the proof of
\cref{lem:radial-lp-companion}.  Its Cauchy estimate gives
\[
 \sup_{z\in(2/3)\Omega}|H(tz)|
 \le Ct\|H\|_{L^q((3/4)\Omega)},\qquad 0<t<t_0.
\]
For $t_0\le t\le1$, the change of variables $w=tz$ gives
\[
 \|H(t\,\cdot)\|_{L^q((2/3)\Omega)}
 \le t^{-2n/q}\|H\|_{L^q((3/4)\Omega)}.
\]
Minkowski's inequality now yields
\begin{equation}\label{eq:alpha-inner-estimate}
 \|h\|_{L^q((2/3)\Omega)}
 \le C_\alpha\|H\|_{L^q((3/4)\Omega)}.
\end{equation}
Both integrals in $t$ are finite, since the first has factor
$t^\alpha$ and the second starts at $t_0>0$.
The submean inequality near $\partial((1/2)\Omega)$ controls
the trace $h(\xi/2)$ by the right side of
\eqref{eq:alpha-inner-estimate}.

Put $r_0=1/2$.  On $r_0<r<1$ we write
\[
 h(r\xi)
 =
 r^{-\alpha}r_0^\alpha h(r_0\xi)
 +
 r^{-\alpha}\int_{r_0}^r t^{\alpha-1}H(t\xi)\,\dd t .
\]
The first term is controlled by the compact estimate.  In the second
term, both $r^{-\alpha}$ and $t^{\alpha-1}$ are bounded above and below
by constants depending only on $\alpha$ and $r_0$.  The same boundary
Hardy inequality as in \cref{lem:radial-lp-companion}, together with
$1-r\asymp\delta(r\xi)$, gives
\[
 \|h\|_{A^q}^q
 \le
 C_\alpha
 \int_\Omega |(R+\alpha)h(z)|^q\delta(z)^q\,\dd v(z).
\]
For the reverse estimate, Cauchy's inequality gives the same bound for
$\delta Rh$ as before, and
\[
 |(R+\alpha)h|^q\delta^q
 \le
 C_\alpha\{ |Rh|^q\delta^q+|h|^q\delta^q\}.
\]
The Whitney covering argument used in \cref{lem:radial-lp-companion}
therefore gives
\[
 \|h\|_{A^q}^q
 \asymp_\alpha
 \int_\Omega |(R+\alpha)h(z)|^q\delta(z)^q\,\dd v(z),
 \qquad 1<q<\infty,\quad h(0)=0 .
\]
For $h=C^{(\alpha)}_{g,\varphi}f$, the defining integral gives
$h(0)=0$, so the preceding estimate applies.  Also
\[
 C^{(\alpha)}_{g,\varphi}f(rz)
 =
 r^{-\alpha}
 \int_0^r Rf(\varphi(sz))g(sz)s^{\alpha-1}\,\dd s .
\]
Therefore
\[
 (R+\alpha)C^{(\alpha)}_{g,\varphi}f
 =
 g\,Rf\circ\varphi .
\]
It follows that
\[
\begin{aligned}
 \|C^{(\alpha)}_{g,\varphi}f\|_{A^q}^q
 &\asymp_\alpha
 \int_\Omega |Rf(\varphi(z))|^q|g(z)|^q\delta(z)^q\,\dd v(z)\\
 &=
 \int_\Omega |Rf(w)|^q\delta(w)^q\,\dd\mu_{g,\varphi,q}(w).
\end{aligned}
\]
The proofs of \cref{thm:p-le-q,thm:q-less-p} use only this reduction and
\cref{thm:derivative-carleson}.  They therefore apply verbatim, with
constants depending also on $\alpha$.  This proves the boundedness,
compactness, norm, and essential norm assertions.  When
$q=2$, the last equivalence says that
\[
 \|C^{(\alpha)}_{g,\varphi}f\|_{A^2}^2
 \asymp_\alpha
 \int_\Omega |Rf(w)|^2\delta(w)^2\,\dd\mu_{g,\varphi,2}(w).
\]
Thus $(C^{(\alpha)}_{g,\varphi})^*C^{(\alpha)}_{g,\varphi}$ is
comparable, in the quadratic-form sense, with the derivative Toeplitz
operator associated with $\mu_{g,\varphi,2}$.  Monotonicity of singular
values then gives the same Schatten criterion as in
\cref{thm:schatten-companion}, as well as the sufficient estimate in
\cref{cor:small-schatten-sufficient}.  The constants may depend on $\alpha$.
\end{proof}

\subsection{Compact-range symbols}

\begin{proposition}\label{cor:strict-self-map}
Let $\alpha\ge0$.  Let $g\in H^\infty(\Omega)$ and let
$\varphi\in\mathcal H(\Omega)$ satisfy $\varphi(0)=0$.  Assume that
there is a compact set $K\subset\Omega$ such that
\[
 \varphi(\Omega)\subset K .
\]
Then, for all $1<p,q<\infty$,
\[
 C^{(\alpha)}_{g,\varphi}:A^p(\Omega)\to A^q(\Omega)
\]
is compact.  Moreover, for every $0<s<\infty$,
\[
 C^{(\alpha)}_{g,\varphi}\in\Schatten_s(A^2(\Omega)).
\]
More precisely, there are constants $C,c>0$ such that
\[
 s_j(C^{(\alpha)}_{g,\varphi})
 \le C\exp(-c j^{1/n}),\qquad j\ge1.
\]
\end{proposition}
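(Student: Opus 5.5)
The plan is to reduce everything, via the norm comparison in \cref{cor:alpha-family}, to the pull-back measure $\mu=\mu_{g,\varphi,q}$, and then exploit that $\mu$ lives on the compact set $K$. By \cref{prop:companion-reduction} (or its $\alpha$-version),
\[
 \|C^{(\alpha)}_{g,\varphi}f\|_{A^q}^q\asymp_\alpha
 \int_\Omega |Rf(\varphi(z))|^q|g(z)|^q\delta(z)^q\,\dd v(z).
\]
Since $\varphi(\Omega)\subset K$ and $\delta$ is bounded, this is at most $C\|g\|_\infty^q\sup_K|Rf|^q$. Fix compacts $K\Subset K'\Subset\Omega$. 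Cauchy's estimate on $K'$ and the sub-mean inequality give $\sup_K|Rf|\le C_K\|f\|_{A^p}$, so the operator is bounded $A^p\to A^q$ for all $1<p,q<\infty$. The same measure is locally finite and supported in $K$, so $\Dcal_{g,\varphi,p,q,\rho}$ vanishes near $\partial\Omega$ and lies in every $L^r(d\lambda_\Omega)$. Compactness then follows from \cref{thm:p-le-q} or \cref{thm:q-less-p}. Alternatively, one can argue directly: a bounded sequence converging locally uniformly to $0$ has $Rf_m\to0$ uniformly on $K$, hence images tending to $0$, and Montel's theorem finishes.

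For the singular values on $A^2$ I would follow the compact-core part of the proof of \cref{prop:small-derivative-sufficient}. Choose $0<r_0<r_1<1$ with $K\subset r_0\Omega$. This is possible because $K$ is compact, $0\in\Omega$ and $\Omega$ is convex, so the sets $r\Omega$ exhaust $\Omega$. Each $\partial_\ell f$ is holomorphic on $r_1\Omega$ and bounded there by $C\|f\|_{A^2}$.

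A convenient polynomial approximation is the following. Expand $\partial_\ell f$ in its Taylor series at $0$ on a polydisc containing $r_0\overline\Omega$ whose slightly larger dilate sits inside the domain of holomorphy. If such a polydisc is not available, I would replace the Taylor expansion by a fixed chain of overlapping polydiscs, or invoke Bernstein–Walsh for the convex compact set $r_0\overline\Omega$ inside $r_1\Omega$. This gives polynomials $P_{\ell,N}f$ of degree $\le N$, linear in $f$, with $\sup_{K}|\partial_\ell f-P_{\ell,N}f|\le C\theta^N\|f\|_{A^2}$ for some $\theta<1$. This approximation is the step I expect to need the most care, specifically keeping the map $f\mapsto P_{\ell,N}f$ linear with uniform constants. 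Using linear Taylor truncations at the origin with the Cauchy estimate is the first thing I would try, via a chain of polydiscs if $K$ is large.

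Next define $T_Nf$ as the $(R+\alpha)^{-1}$-primitive of $g\sum_\ell\varphi_\ell\,P_{\ell,N}f(\varphi)$, that is, the operator $C^{(\alpha)}_{g,\varphi}$ with $Rf\circ\varphi$ replaced by $\sum_\ell\varphi_\ell P_{\ell,N}f\circ\varphi$. Its range is spanned by the images of at most $n\binom{N+n}{n}$ monomial coefficient functionals, so $\rank T_N\le n\binom{N+n}{n}$. By the radial Littlewood–Paley equivalence used in \cref{cor:alpha-family},
\[
 \|(C^{(\alpha)}_{g,\varphi}-T_N)f\|_{A^2}^2\lesssim_\alpha\int_\Omega|g|^2\delta^2\Bigl|\sum_\ell\varphi_\ell(\partial_\ell f-P_{\ell,N}f)\circ\varphi\Bigr|^2\dd v\le C\theta^{2N}\|f\|^2_{A^2}.
\]
This estimate needs the difference to vanish at $0$, which holds because $\varphi(0)=0$. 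Hence the approximation numbers satisfy $a_{n\binom{N+n}{n}+1}\le C\theta^N$. Given $j$, choose $N\asymp j^{1/n}$ with $n\binom{N+n}{n}<j$; this yields $s_j\le C\exp(-cj^{1/n})$. Summing $\exp(-csj^{1/n})$ over $j$ converges for every $s>0$, which gives $C^{(\alpha)}_{g,\varphi}\in\Schatten_s$.
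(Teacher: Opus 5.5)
Your proposal is correct and is essentially the paper's argument. Compactness comes from $\mu_{g,\varphi,q}$ being a finite measure carried by $K$; your direct Montel argument is an equally valid shortcut. The singular-value bound comes from Bernstein--Walsh approximation on $r_0\overline\Omega\Subset r_1\Omega$ together with the (shifted) radial Littlewood--Paley estimate. The paper packages this as the factorization $C^{(\alpha)}_{g,\varphi}=B_\alpha J$ through the restriction $J:A^2(\Omega)\to A^2(r_0\Omega)$, instead of writing out your $T_N$.

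Of your three approximation options, only Bernstein--Walsh works as stated. A polydisc centered at $0$ containing $K$ need not even lie in $\Omega$, and local Taylor polynomials on a chain of polydiscs do not glue into one polynomial, so your $T_Nf$ would not be holomorphic. Your linearity worry is moot: $T_Nf$ always lies in a fixed subspace of dimension at most $n\binom{N+n}{n}$, and orthogonal projection onto it gives the finite-rank approximant.
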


\begin{proof}
Fix $q$.  Since $K$ is compact in $\Omega$, there is $c_K>0$ such that
\[
 \delta(w)\ge c_K,\qquad w\in K .
\]
The measure $\mu_{g,\varphi,q}$ is carried by $K$ and has no atom at
$0$.  Moreover
\[
\begin{aligned}
 \mu_{g,\varphi,q}(\Omega)
 &=
 \int_{\{\varphi(z)\ne0\}}
 |g(z)|^q
 \left(\frac{\delta(z)}{\delta(\varphi(z))}\right)^q
 \dd v(z)\\
 &\le
 C_K\|g\|_\infty^q v(\Omega)<\infty .
\end{aligned}
\]
Thus $\mu_{g,\varphi,q}$ is a finite measure whose support is contained
in the compact set $K\subset\Omega$.  By the McNeal geometry, the set of
points $z$ for which
$\Q_\rho(z)$ meets this support is contained in another compact subset
of $\Omega$.  On this compact set $\V_\rho$ is bounded above and below.
Hence
\[
 \Dcal_{g,\varphi,p,q,\rho}(z)\to0,
 \qquad z\to\partial\Omega,
\]
when $p\le q$, and
\[
 \Dcal_{g,\varphi,p,q,\rho}\in L^r(\Omega,d\lambda_\Omega),
 \qquad r=\frac{pq}{p-q},
\]
when $q<p$.  The compactness follows from
\cref{thm:p-le-q,thm:q-less-p} when $\alpha=0$, and from
\cref{cor:alpha-family} when $\alpha>0$.

Choose $0<r_0<r_1<1$ with
\[
 K\Subset U_0:=r_0\Omega\Subset U_1:=r_1\Omega\Subset\Omega.
\]
Let $J:A^2(\Omega)\to A^2(U_0)$ be the restriction map.  Cauchy's
estimate on $U_1$ followed by Bernstein--Walsh polynomial approximation
on the nested convex sets $\overline{U_0}\Subset U_1$ gives the
following uniform bound: for each $f\in A^2(\Omega)$ and $N\ge0$,
there is a polynomial $p_N$ of total degree at most $N$ such that
\[
 \sup_{U_0}|f-p_N|\le C\theta^N\|f\|_{A^2(\Omega)},
 \qquad 0<\theta<1.
\]
Here $\overline{U_1}\Subset\Omega$, so Cauchy's estimate bounds
$\sup_{U_1}|f|$ by $C\|f\|_{A^2(\Omega)}$; the approximation
constants depend only on the fixed nested domains.  Consequently,
\begin{equation}\label{eq:compact-range-restriction}
 a_j(J)\le C\exp(-c j^{1/n}),\qquad j\ge1.
\end{equation}
Indeed, let $\mathcal P_N$ denote the polynomials of total degree at
most $N$, and let $P_N$ be the orthogonal projection of $A^2(U_0)$
onto $\mathcal P_N$.  Since $P_N$ gives the best approximation in
$A^2(U_0)$ from $\mathcal P_N$, the preceding bound gives
\[
 \|(I-P_N)Jf\|_{A^2(U_0)}
 \le C\theta^N\|f\|_{A^2(\Omega)}
\]
for some $0<\theta<1$.  Since
$\rank(P_NJ)\le\dim\mathcal P_N=\binom{N+n}{n}=O(N^n)$, choosing
$N\asymp j^{1/n}$ proves \eqref{eq:compact-range-restriction}.

The operator $C^{(\alpha)}_{g,\varphi}$ factors as $B_\alpha J$,
where $B_\alpha$ applies the defining integral to
$h\in A^2(U_0)$.  This is well defined since
$\varphi(\Omega)\subset K\Subset U_0$.  Cauchy's estimate gives
\[
 \sup_{w\in K}|Rh(w)|\le C_K\|h\|_{A^2(U_0)}.
\]
The radial Littlewood--Paley estimate, or its shifted version,
therefore gives
$\|B_\alpha h\|_{A^2(\Omega)}
\le C_{K,\alpha}\|g\|_\infty\|h\|_{A^2(U_0)}$.
The ideal property of approximation numbers and
\eqref{eq:compact-range-restriction} give
\[
 a_j(C^{(\alpha)}_{g,\varphi})
 \le \|B_\alpha\|a_j(J)
 \le C\exp(-c j^{1/n}).
\]
On Hilbert spaces, approximation numbers and singular values agree.
Hence
\[
 s_j(C^{(\alpha)}_{g,\varphi})
 \le C\exp(-c j^{1/n}).
\]
Summation proves membership in every $\Schatten_s$, $s>0$.
\end{proof}

\begin{corollary}\label{cor:no-schatten}
Let $C_g=C_{g,\operatorname{id}}$.  If $0<s<\infty$, then
\[
 C_g\in\Schatten_s(A^2(\Omega))
 \quad\Longleftrightarrow\quad
 g=0 .
\]
\end{corollary}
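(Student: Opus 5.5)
The direction $g=0\Rightarrow C_g\in\Schatten_s$ is trivial, since $C_0=0$. For the converse, the plan is to show that $C_g\in\Schatten_s$ for any $0<s<\infty$ already forces $C_g$ to be compact on $A^2(\Omega)$. Then the corollary following \cref{thm:q-less-p}, part (iii) with $p=q=2$, gives $g=0$. This works because every Schatten-class operator is compact, so membership in $\Schatten_s$ for any $s>0$, large or small, implies compactness. Hence no Schatten criterion is needed, and in particular we avoid the unresolved range $s<2$.

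To be explicit about the chain, assume $C_g\in\Schatten_s(A^2(\Omega))$. Then $C_g$ is a compact operator on $A^2(\Omega)$. We need to check the hypotheses of \cref{thm:p-le-q} with $\varphi=\operatorname{id}$ and $p=q=2$. The only one is local finiteness of $\mu_{g,\operatorname{id},2}$. By the definition of the pull-back measure, $\dd\mu_{g,\operatorname{id},2}=|g|^2\,\dd v$ off the single point $0$, and this is locally finite because $g$ is holomorphic and hence locally bounded.

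\cref{thm:p-le-q} then gives $\Dcal_{g,\operatorname{id},2,2,\rho}(z)=\Acal_{g,2,\rho}(z)\to0$ as $z\to\partial\Omega$. The sub-mean inequality on McNeal polydiscs gives $|g(z)|\lesssim\Acal_{g,2,\rho}(z)$, so $\sup_{\delta(z)<\varepsilon}|g(z)|\to0$ as $\varepsilon\to0$. The maximum principle on the exhaustion $\{\delta>\varepsilon\}$ then yields $g\equiv0$.

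The only point needing care, which is the mild main obstacle, is to make sure that \enquote{$C_g\in\Schatten_s$} refers to the bounded extension of $C_g$ to $A^2$, so that compactness makes sense and \cref{thm:p-le-q} applies. If one wants a route independent of the compactness theorem, one can instead use the derivative peak functions $F_a$ of \cref{lem:derivative-peaks}. They tend weakly to $0$ by the unnumbered lemma on weak convergence, so $\|C_gF_a\|_{A^2}\to0$. Via \cref{prop:companion-reduction} this gives $\mu_g(\Q_{\epsilon\rho}(a))/\V_\rho(a)\to0$, and the same maximum-principle argument finishes the proof.
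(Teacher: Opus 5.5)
Your proposal is correct and is essentially the paper's proof. In both, Schatten membership gives compactness, and the compactness part of \cref{thm:p-le-q} with $p=q=2$ and $\varphi=\operatorname{id}$ gives $\Acal_{g,2,\rho}(z)\to0$ at the boundary; the sub-mean inequality and the maximum principle then force $g=0$. Your explicit check that $\mu_{g,\operatorname{id},2}=|g|^2\,dv$ is locally finite verifies a hypothesis the paper leaves tacit.
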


\begin{proof}
If $g=0$, the assertion is clear.  Conversely, assume
$C_g\in\Schatten_s$.  Then $C_g$ is compact.  The compactness part of
\cref{thm:p-le-q}, with $p=q=2$ and
$\varphi=\operatorname{id}$, gives
\[
 \Acal_{g,2,\rho}(z)\to0,
 \qquad z\to\partial\Omega.
\]
The sub-mean inequality and the maximum principle give $g=0$.
\end{proof}

\section*{Statements and Declarations}
\subsection*{Funding}
The first author was supported by the National Natural Science Foundation of
China (Grant No.~12601234).  The second author was supported by the National
Natural Science Foundation of China (Grant No.~12401154).

\subsection*{Data availability}
No datasets were generated or analyzed in this study.

\subsection*{Competing interests}
The authors declare that they have no competing interests.

\enlargethispage{3\baselineskip}
\par\smallskip
{\footnotesize
\noindent\textsc{Jianxiang Dong}, School of Mathematics and Statistics,
Tianshui Normal University, Tianshui 741000, China.
E-mail: \href{mailto:jianxd@tsnu.edu.cn}{jianxd@tsnu.edu.cn}.
\par\smallskip
\noindent\textsc{Chunxu Xu} (corresponding author), School of Science,
Nanjing Forestry University, Nanjing 210037, China.
E-mail: \href{mailto:1968385450@qq.com}{1968385450@qq.com}.
\par}

\end{document}